\numberwithin{equation}{section}
\newtheorem{thm}[equation]{Theorem}
\newtheorem{prop}[equation]{Proposition}
\newtheorem{lemma}[equation]{Lemma}
\newtheorem{cor}[equation]{Corollary}
\newtheorem{ques}[equation]{Question}
\newtheorem{example}[equation]{Example}
\newtheorem{remark}[equation]{Remark}
\newtheorem{definition}[equation]{Definition}
\newenvironment{defn}{\begin{definition}\rm}{\end{definition}}
\newenvironment{ex}{\begin{example}\rm}{\end{example}}
\newenvironment{rem}{\begin{remark}\rm}{\end{remark}}
\newcommand{\C}{{{\mathbb C}}}
\renewcommand{\P}{{{\mathbb P}}}
\newcommand{\N}{{{\mathbb N}}}
\newcommand{\Z}{{{\mathbb Z}}}
\newcommand{\M}{{{\mathcal M}}}
\newcommand{\Q}{{{\mathcal Q}}}
\newcommand{\QQ}{{{\mathbb Q}}}
\newcommand{\Gr}{{{\rm Gr}}}
\newcommand{\SL}{{{\rm SL}}}
\newcommand{\Sp}{{{\rm Sp}}}
\newcommand{\LG}{{{\rm LG}}}
\newcommand{\LM}{{L{\mathcal M}}}
\newcommand{\LQ}{{L{\mathcal Q}}}
\newcommand{\Fdot}{F_{\bullet}}
\newcommand{\bari}{\bar{\imath}}
\newcommand{\angles}[1]{\langle #1 \rangle}
\newcommand{\angbin}[2]{\binom{\langle #1 \rangle}{#2}}
\newcommand{\tangbin}[2]{\tbinom{\langle #1 \rangle}{#2}}
\newcommand{\contr}{\lrcorner}
\newcommand{\one}{{\begin{minipage}[c]{.15cm}\includegraphics[width=.15cm]{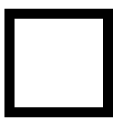}\end{minipage}}}
\newcommand{\strtm}[3]{\P({S^{#1}\C^2}^*)\times{\LM_{#2{-}#1}(#3)}}
\title[Quasimaps for the Lagrangian Grassmannian]
{Quasimaps, straightening laws, and quantum
cohomology for the Lagrangian Grassmannian}
\author{James Ruffo}
\address{Department of Mathematics\\
         SUNY College at Oneonta\\
         Oneonta\\
         NY \ 13820\\
         USA}
\email{ruffojv@oneonta.edu}
\urladdr{http://employees.oneonta.edu/\~{}ruffojv}
\begin{document}
\begin{abstract}
The Drinfel'd Lagrangian Grassmannian
compactifies the space of algebraic maps
of fixed degree from the projective line into
the Lagrangian Grassmannian.  It has a
natural projective embedding arising from
the canonical embedding of the
Lagrangian Grassmannian.  We show that
the defining ideal of any Schubert subvariety
of the Drinfel'd Lagrangian Grassmannian
is generated by polynomials which give a
straightening law on an ordered set.  Consequentially,
any such subvariety is Cohen-Macaulay and Koszul.
The Hilbert function is computed from the straightening
law, leading to a new derivation of certain intersection
numbers in the quantum cohomology ring of the Lagrangian
Grassmannian.
\end{abstract}
%
\subjclass[2000]{13P10, 13F50, 14N15, 14N35}
%
%
\keywords{Algebra with straightening law, Quasimaps,
          Lagrangian Grassmannian, Quantum cohomology}
%
%
\date{\today}
%
\maketitle

\section{Introduction}

The space of algebraic maps of degree $d$ from $\P^1$
to a projective variety $X$ has applications
to mathematical physics, linear systems theory,
quantum cohomology, geometric
representation theory, and the geometric Langlands
correspondence~\cite{Bra06,So00c,Sot01}.
This space is (almost) never compact, so various
compactifications have been introduced to help
understand its geometry.
Among these (at least when $X$ is a flag variety)
are Kontsevich's space of
{\it stable maps}~\cite{FuPa95,Kon95},
the {\it quot scheme} (or space of {\it quasiflags})
~\cite{Che01,Lau90,Str87}, and
the {\it Drinfel'd compactification}
(or space of {\it quasimaps}).
This latter space is defined concretely as a projective
variety, and much information can be gleaned
directly from its defining equations.

Inspired by the work of Hodge~\cite{Ho43},
Lakshmibai, Musili, Seshadri, and others 
(see~\cite{Lak03,Mus03} and
references therein) developed 
standard monomial theory to study
the {\it flag varieties}
$G/P$, where $G$ is a
semisimple algebraic group
and $P\subseteq G$ is a
parabolic subgroup.  These
spaces have a decomposition
into {\it Schubert cells}, whose
closures (the {\it Schubert varieties})
give a basis for cohomology.
As consequences of standard monomial theory,
Schubert varieties are normal and
Cohen-Macaulay, and one has an explicit
description of their singularities and
defining ideals.

A key part of standard monomial theory
is that any flag variety $G/P$ ($P$ a
parabolic subgroup) has a projective
embedding which presents its coordinate ring
as an {\it algebra with straightening law}
(Definition~\ref{asl.def}),
a special case of a {\it Hodge algebra}~\cite{DEP}.
This idea originates with the
work of Hodge on the Grassmannian~\cite{Ho43},
and was extended to the Lagrangian
Grassmannian by DeConcini and
Lakshmibai~\cite{DeLa79}.

Sottile and Sturmfels have extended standard
monomial theory to the {\it Drinfel'd Grassmannian}
parametrizing algebraic maps from $\P^1$ into
the Grassmannian~\cite{SoSt01}.  They define
Schubert subvarieties of this
space and prove that the
homogeneous coordinate ring of any
Schubert variety (including the Drinfel'd
Grassmannian itself) is an algebra
with straightening law on a distributive lattice.
Using this fact, the authors show that these
Schubert varieties are normal, Cohen-Macaulay and Koszul,
and have rational singularities.

We extend these results to
the {\it Drinfel'd Lagrangian Grassmannian},
which parametrizes
algebraic maps from $\P^1$
into the Lagrangian Grassmannian.
In particular, we prove the following in
Section~\ref{str8law.sec}
(Theorems~\ref{asl.thm} and~\ref{deadon.thm}).

\begin{thm}\label{big.thm}
The coordinate ring of any Schubert subvariety of
the Drinfel'd Lagrangian Grassmannian
is an algebra with straightening law on a doset.
\end{thm}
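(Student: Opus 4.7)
The plan is to build on two known structures: DeConcini--Lakshmibai's straightening law for the Lagrangian Grassmannian $\LG(n)$ (which lives on a doset rather than a lattice because of the symplectic constraint), and Sottile--Sturmfels' straightening law for the Drinfel'd Grassmannian (which lives on a distributive lattice and incorporates degree data from $\P^1$). I would combine both features into a doset $H$ whose elements are pairs $(\alpha,k)$, with $\alpha$ an admissible (isotropic) index set for $\LG(n)$ and $k$ a degree parameter $0\le k\le d$; the partial order refines the Bruhat order on admissible subsets using the degree, and the distinguished comparable pairs of $H$ reflect the ``Lagrangian'' admissible pairs from DeConcini--Lakshmibai.

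With $H$ in hand, the second step is to exhibit a family of quadratic straightening relations satisfied by the natural generators of the homogeneous coordinate ring of the Drinfel'd Lagrangian Grassmannian. These come in two flavors: degree-preserving relations, inherited from the Lagrangian Pl\"ucker relations on $\LG(n)$, and mixed-degree relations, analogous to the relations used by Sottile and Sturmfels in the Type A case. Each incomparable monomial $h_1 h_2$ should rewrite as an integer linear combination of standard monomials $h'_1 h'_2$, with $(h'_1,h'_2)$ a distinguished pair in the doset and the leading factor $h'_1$ strictly smaller than $\min(h_1,h_2)$ in a chosen term order that guarantees termination.

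Verifying the ASL axioms then reduces to showing that standard monomials form a basis for the coordinate ring. Spanning follows from the termination of the straightening procedure in the chosen order. Linear independence is where I expect the main obstacle to lie: it requires a Hilbert function computation comparing the number of standard monomials in each graded piece with an independent count. A natural way to obtain such a count is via a Gr\"obner degeneration to the Stanley--Reisner-like ring supported on multichains in $H$, combined with projecting the Drinfel'd Lagrangian Grassmannian to its image in the Drinfel'd Grassmannian (where the Sottile--Sturmfels basis is available) and controlling the fibers using the symplectic isotropy condition.

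Finally, for an arbitrary Schubert subvariety, the defining ideal should be generated by the subset of the straightening relations supported on the corresponding order ideal of $H$, together with the vanishings of the coordinates outside that order ideal. Once the ASL structure on the full Drinfel'd Lagrangian Grassmannian is established, the Schubert case follows by a more or less formal argument from the general theory of algebras with straightening law on dosets, since Schubert subvarieties correspond by construction to order ideals of the indexing doset.
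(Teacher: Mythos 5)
Your high-level strategy matches the paper's: combine the DeConcini--Lakshmibai doset structure for $\LG(n)$ with the Sottile--Sturmfels lattice structure for $\Q_d(k,n)$ to form the doset ${\mathcal D}_{d,n}$ on ${\mathcal P}_{d,n}$, then work with two kinds of quadratic relations (degree-preserving Lagrangian Pl\"ucker relations plus the mixed-degree Type A relations) and a Gr\"obner argument. Your description of the indexing doset and of the Schubert case being formal is also on target.

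However, there are two concrete gaps where the proposal stops short of a proof. First, your linear independence argument (Gr\"obner degeneration to a Stanley--Reisner-type ring, plus ``projecting to the Drinfel'd Grassmannian and controlling fibers using the isotropy condition'') is not a worked-out mechanism. The paper's actual tool is an explicit dual basis of $({\rm L}(\omega_n))_0$ built from descending matchings (the vectors $K_\alpha$), which yields Lemma~\ref{basis.lemma} (Pl\"ucker coordinates indexed by Northeast sequences form a basis of ${\rm L}(\omega_n)^*$) and, more critically, Lemma~\ref{smaller.lemma}, which pins down the precise triangular shape of the reduced normal form for $L_{d,n}$. Without this triangularity one cannot verify condition (3) of the ASL definition when reducing a product $p_\alpha p_\beta$ modulo $G_{I_{d,n}}\cup G_{L_{d,n}}$; the fiber-counting heuristic you suggest would have to reproduce exactly this normal-form control, and it is not clear how to do that directly from the projection to the ordinary Drinfel'd Grassmannian. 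Second, you do not address why the ideal generated by the straightening relations is the true defining ideal of $\LQ_d(n)$: the paper needs Theorems~\ref{saturated.thm} and~\ref{reduced.thm} (an ASL on a doset is reduced and its ideal is saturated) together with a degree/codimension comparison to conclude $I_{d,n}+L_{d,n}=I(\LQ_d(n))$ (Theorem~\ref{deadon.thm}). Your argument would establish an ASL structure on a quotient of $\C[{\mathcal D}_{d,n}]$ but would leave open that this quotient is really $\C[\LQ_d(n)]$.
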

A doset is a certain kind of ordered
set (Definition~\ref{doset.def}).
As consequences of Theorem~\ref{big.thm},
we show that the
coordinate ring is reduced, Cohen-Macaulay,
and Koszul, and obtain formulas for its degree
and dimension.  These formulas have an interpretation
in terms of quantum cohomology,
as described in Section~\ref{enumgeom.sec}.

In Section~\ref{prelim.sec}, we review the basic
definitions and facts concerning Drinfel'd
compactifications and the Lagrangian
Grassmannian.  Section~\ref{asl.sec} provides
the necessary background on algebras with
straightening law.  We discuss an application
to the quantum cohomology of the Lagrangian
Grassmannian in Section~\ref{enumgeom.sec}.  Our main result
and its consequences are proved
in Section~\ref{str8law.sec}.

We thank Frank Sottile for suggesting this problem,
and for valuable feedback during the preparation of
this paper.

\section{Preliminaries}\label{prelim.sec}

We first give a precise definition of
the Drinfel'd compactification of the space of algebraic
maps from $\P^1$ to a homogeneous variety.
We then review the basic facts we will need regarding
the Lagrangian Grassmannian.

\subsection{Spaces of algebraic maps}
Let $G$ be a semisimple linear algebraic group.
Fix a Borel subgroup $B\subseteq G$
and a maximal torus $T\subseteq B$.
Let $R$ be the set of roots (determined by $T$),
and $S:=\{\rho_1,\dotsc,\rho_r\}$ the
simple roots (determined by $B$).
The simple roots form an ordered basis for
the Lie algebra ${\mathfrak t}$ of $T$;
let $\{\omega_1,\dotsc,\omega_r\}$
be the dual basis (the {\it fundamental weights}).
The {\it Weyl group} $W$ is
the normalizer of $T$ modulo $T$ itself.

Let $P\subseteq G$ be the maximal parabolic
subgroup associated to the fundamental
weight $\omega$, let ${\rm L}({\omega})$
be the irreducible
representation of highest weight $\omega$,
and let $(\bullet,\bullet)$ denote the Killing form on
${\mathfrak t}$.  For $\rho\in R$,
set $\rho^{\vee}:=2\rho/(\rho,\rho)$.
For simplicity, assume that
$(\omega,\rho^{\vee})\leq 2$
for all $\rho\in S$ ({\it i.e.}, $P$
is of {\it classical type}~\cite{Lak03}).
This condition implies that 
${\rm L}(\omega)$ has $T$-fixed lines indexed
by certain admissible pairs of elements of $W/W_P$.

The flag variety $G/P$ embeds in
$\P{\rm L}(\omega)$ as the orbit of
a highest weight line.
Define the {\it degree} of a algebraic map
$f:\P^1\rightarrow G/P$ to be its degree as a
map into $\P{{\rm L}(\omega)}$.

Let $\M_d(G/P)$ be the space
of algebraic maps of
degree $d$ from $\P^1$ into $G/P$.
If $P$ is of classical type
then the set ${\mathcal D}$
of {\it admissible pairs}
indexes homogeneous coordinates on
$\P{\rm L}(\omega)$
(see Definition~\ref{adm.def},
and~\cite{Lak03,Mus03} for a more thorough treatment).
Therefore, any map $f\in \M_d(G/P)$
can be expressed as
$f:[s,t]\mapsto[p_w(s,t)\mid w\in{\mathcal D}]$,
where the $p_w(s,t)$ are homogeneous
forms of degree $d$.
This leads to an embedding of
$\M_d(G/P)$ into $\P((S^d\C^2)^*\otimes {\rm L}(\omega))$,
where $(S^d\C^2)^*$ is the space of homogeneous
forms of degree $d$ in two variables.
The coefficients of the homogeneous forms
in $(S^d\C^2)^*$ give coordinate functions on
$(S^d\C^2)^*\otimes {\rm L}(\omega)$;
they are indexed by the set
$\{w^{(a)}\mid w\in {\mathcal D}, a=0,\dotsc,d\}$,
a disjoint union of $d{+}1$ copies of ${\mathcal D}$.

The closure of
$\M_d(G/P)\subseteq\P((S^d\C^2)^*\otimes {\rm L}(\omega))$ is
called the {\it Drinfel'd compactification} and denoted
$\Q_d(G/P)$.  This definition is due to
V. Drinfel'd, dating from the mid-1980s.
Drinfel'd never published this
definition himself; to the author's
knowledge its first appearance in
print was in~\cite{Ros94}; see also~\cite{Kuz97}.
 
Let $G=\SL_n(\C)$ and $P$
be the maximal parabolic subgroup
stabilizing a fixed $k$-dimensional
subspace of $\C^n$, so that
$G/P=\Gr(k,n)$.  In this case we denote
the {\it Drinfel'd Grassmannian}
$\Q_d(G/P)$ by $\Q_{d}(k,n)$.
In~\cite{SoSt01} it is shown that
the homogeneous coordinate
ring of $\Q_{d}(k,n)$
is an algebra with straightening
law on the distributive lattice
$\binom{[n]}{k}_d:=
\{\alpha^{(a)}\mid \alpha\in\binom{[n]}{k}, 0\leq a\leq d\}$,
with partial order on $\binom{[n]}{k}_d$
defined by $\alpha^{(a)}\leq\beta^{(b)}$
if and only if $a\leq b$ and
$\alpha_i\leq\beta_{b{-}a{+}i}$
for $i=1,\dotsc,k{-}b{+}a$.
It follows that the homogeneous coordinate
ring of $\Q_{d}(k,n)$
is normal, Cohen-Macaulay,
and Koszul, and that the ideal
$I_{k,n{-}k}^d\subseteq\C[\binom{[n]}{k}_d]$ has a
quadratic Gr\"{o}bner basis
consisting of the straightening
relations.

Taking $d=0$ above, one recovers
the classical {\it Bruhat order} on
$\binom{[n]}{k}$.  For a semisimple
algebraic group $G$ with parabolic subgroup $P$, this is an
ordering on the set of maximal coset representatives
of the quotient of the Weyl group of $G$
by the Weyl group of $P$.

Suppose that $d=\ell k{+}q$ for positive
integers $\ell$ and $q$ with $q<k$,
and let $X=(x_{ij})_{1\leq i,j\leq n}$
be a matrix with polynomial entries
$x_{ij}=x_{ij}^{(k_i)}t^{k_i}+\dotsb+x_{ij}^{(1)}t+x_{ij}^{(0)}$,
where $k_i=\ell{+}1$ if $i\leq q$
and $k_i=\ell$ if $i>q$.
The ideal $I_{k,n{-}k}^{d}$ is
the kernel of the map
$\varphi:\C[\binom{[n]}{k}_d]\rightarrow \C[X]$ sending
the variable $p_{\alpha}^{(a)}$ indexed by
$\alpha^{(a)}\in\binom{[n]}{k}_d$
to the coefficient of $t^a$ in the maximal minor
of $X$ whose columns are indexed by $\alpha$.
 
The main results of~\cite{SoSt01}
follow from the next proposition.
Given any distributive lattice,
we denote by $\wedge$ and
$\vee$, respectively, the
{\it meet} and {\it join}.  The
symbol $\wedge$ will also be
used for exterior products
of vectors, but the meaning should
be clear from the context.
\begin{prop}\cite[Theorem 10]{SoSt01}
Let $\alpha,\beta$ be a pair of
incomparable variables in the poset
$\binom{[n]}{k}_d$.  There is
a quadratic polynomial
$S(\alpha,\beta)$ in the kernel of
$\varphi:\C[\binom{[n]}{k}_d]\rightarrow \C[X]$
whose first two monomials are
\[
p_{\alpha}p_{\beta}-
              p_{\alpha\wedge\beta}
              p_{\alpha\vee\beta}\,.
\]
Moreover, if $\lambda p_{\gamma}p_{\delta}$
is any non-initial
monomial in $S(\alpha,\beta)$, then
$\alpha,\beta$ lies in the
interval $[\gamma,\delta]
=\{\theta\in\binom{[n]}{k}_d\mid
\gamma\leq\theta\leq\delta\}$.
\end{prop}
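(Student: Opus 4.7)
The plan is to lift the classical quadratic Plücker straightening relations on the Grassmannian to relations on the Drinfel'd Grassmannian by extracting coefficients in $t$. Since $\varphi$ sends $p_\alpha^{(a)}$ to the coefficient of $t^a$ in the maximal minor of $X$ with columns indexed by $\alpha$, and the maximal minors of $X$ are honest polynomials in $\C[X][t]$ satisfying the classical quadratic Plücker identities there, extracting the coefficient of each power of $t$ in a Plücker identity yields a quadratic polynomial in the $p_\alpha^{(a)}$ that lies in $\ker\varphi$.

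First, given incomparable $\alpha^{(a)}, \beta^{(b)} \in \binom{[n]}{k}_d$, I would apply Hodge's classical straightening law to the pair $\alpha, \beta \in \binom{[n]}{k}$. When $\alpha$ and $\beta$ are incomparable in $\binom{[n]}{k}$ there is a quadratic identity
\[
P_\alpha P_\beta - P_{\alpha \wedge \beta} P_{\alpha \vee \beta} = \sum_{\gamma,\delta} \pm P_\gamma P_\delta
\]
in $\C[X][t]$, with each $(\gamma,\delta)$ comparable, lying in the classical interval $[\alpha\wedge\beta, \alpha\vee\beta]$, and with $\alpha,\beta \in [\gamma,\delta]$. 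When $\alpha, \beta$ are comparable in $\binom{[n]}{k}$ but $\alpha^{(a)}, \beta^{(b)}$ are only incomparable in $\binom{[n]}{k}_d$ because of the degree condition, the polynomial identity $P_\alpha P_\beta = P_{\alpha\wedge\beta} P_{\alpha\vee\beta}$ is trivial in $\C[X][t]$ but becomes nontrivial after extracting the coefficient of $t^{a+b}$.

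Next, I would view the chosen identity as a polynomial identity in $t$ and extract the coefficient of $t^{a+b}$. The product $P_\alpha P_\beta$ contributes $\sum_{c+e=a+b} p_\alpha^{(c)} p_\beta^{(e)}$, and analogously for each other pair. The monomial $p_\alpha^{(a)} p_\beta^{(b)}$ then appears with coefficient $1$, and one isolates the unique term $p_{\alpha\wedge\beta}^{(a')} p_{\alpha\vee\beta}^{(b')}$ whose indices realize the meet and join of $\alpha^{(a)}$ and $\beta^{(b)}$ in the shifted order on $\binom{[n]}{k}_d$; the splitting $(a',b')$ is forced by the definition of that order. An appropriate reverse-lexicographic refinement of the lattice order on $\C[\binom{[n]}{k}_d]$ then makes these the first two monomials of the resulting relation.

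The main obstacle is the interval condition for every remaining monomial $\lambda p_\gamma^{(c)} p_\delta^{(e)}$: one must verify that $\alpha^{(a)}, \beta^{(b)}$ sit in the $\binom{[n]}{k}_d$-interval $[\gamma^{(c)}, \delta^{(e)}]$. This is combinatorial bookkeeping combining the classical interval condition inherited from Hodge's relation with the shift appearing in the partial order on $\binom{[n]}{k}_d$; the constraint $c+e=a+b$ coming from homogeneity in $t$ is essential, and one expects that each admissible splitting of the total degree $a+b$ across $(c,e)$ consistent with the chosen pair $(\gamma,\delta)$ automatically respects the shift, so that the $\binom{[n]}{k}_d$-interval condition reduces to the classical one applied to the underlying subsets. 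I would handle this either directly from the definition of the shifted order or by induction through the classical straightening algorithm on the Grassmannian.
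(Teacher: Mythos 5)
This proposition is cited from \cite[Theorem~10]{SoSt01}; the paper gives no proof of its own, so your proposal can only be assessed on its merits, and it has two substantive gaps.

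First, your treatment of the case where $\alpha$ and $\beta$ are comparable in $\binom{[n]}{k}$ but $\alpha^{(a)}$, $\beta^{(b)}$ are incomparable in $\binom{[n]}{k}_d$ is simply incorrect: if $P_{\alpha}P_{\beta}=P_{\alpha\wedge\beta}P_{\alpha\vee\beta}$ holds identically in $\C[X][t]$ (as it does, since the two sides are literally the same product), then extracting the coefficient of $t^{a+b}$ gives $0=0$ --- a trivial identity cannot ``become nontrivial'' by equating coefficients. For instance, take $34^{(0)}$ and $12^{(1)}$ in $\binom{[4]}{2}_1$: these are incomparable, the shifted meet and join are $24^{(0)}$ and $13^{(1)}$, and the needed quadratic comes from the \emph{three-term} Pl\"ucker relation $P_{34}P_{12}-P_{24}P_{13}+P_{23}P_{14}=0$, not from anything involving $\alpha\wedge\beta=12$ and $\alpha\vee\beta=34$. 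Selecting the correct Pl\"ucker relation to lift, and showing it produces the right leading pair, is precisely the content missing from your argument.

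Second, even in the genuinely incomparable case your identification of the second monomial is not right in general: the meet and join of $\alpha^{(a)}$ and $\beta^{(b)}$ in the shifted lattice $\binom{[n]}{k}_d$ need \emph{not} be of the form $(\alpha\wedge\beta)^{(a')}$, $(\alpha\vee\beta)^{(b')}$. Take $\alpha^{(0)}=156^{(0)}$ and $\beta^{(1)}=234^{(1)}$ in $\binom{[6]}{3}_1$: these are incomparable, $\alpha\wedge\beta=134$ and $\alpha\vee\beta=256$ in $\binom{[6]}{3}$, but the shifted meet and join are $146^{(0)}$ and $235^{(1)}$. So extracting the $t^{1}$-coefficient of the classical straightening relation for $P_{156}P_{234}$ does not yield a relation whose second monomial is $p_{\alpha\wedge\beta}p_{\alpha\vee\beta}$ in the sense of the proposition. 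Finally, you explicitly defer the interval condition (``combinatorial bookkeeping,'' ``one expects that\dots''); this is the hard part of the theorem, not a formality, precisely because the shifted order interacts nontrivially with the $t$-degree splitting, as the two examples above illustrate.
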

The quadratic polynomials $S(\alpha,\beta)$
in fact form a Gr\"obner basis for the ideal
they generate.  It is shown in~\cite{SoSt01}
that there exists a toric ({\sc sagbi}) deformation taking
$S(\alpha,\beta)$
to its {\it initial form}
$p_{\alpha}p_{\beta}-
             p_{\alpha\wedge\beta}
             p_{\alpha\vee\beta}$,
deforming the Drinfel'd Grassmannian into a toric variety.

Our goal is to extend the main results
of standard monomial theory
to the {\it Lagrangian Drinfel'd Grassmannian}
$\LQ_d(n):=\Q_d(\LG(n))$
of degree-$d$ maps from $\P^1$
into the Lagrangian Grassmannian.

\subsection{The Lagrangian Grassmannian}\label{lag.ssec}

In its natural projective embedding,
the Lagrangian Grassmannian $\LG(n)$
is defined by quadratic relations
which give a straightening law on
a doset~\cite{DeLa79}.
These relations are obtained
by expressing $\LG(n)$ as
a linear section of $\Gr(n,2n)$.
While this is well-known, the
author knows of no explicit
derivation of these relations
which do not require the
representation theory of
semisimple algebraic groups.  We
provide a derivation which does not
rely upon representation theory
(although we adopt the
notation and terminology).
This will
be useful when we consider the
Drinfel'd Lagrangian Grassmannian,
to which representation theory has
yet to be successfully applied.

Set $[n]:=\{1,2,\dotsc,n\}$, $\bari:=-i$,
and $\angles{n}:=
\{\bar{n},\dotsc,\bar1,1,\dotsc n\}$.
If $S$ is any set, let
$\binom{S}{k}$ be the collection of subsets
$\alpha=\{\alpha_1,\dotsc,\alpha_k\}$ of cardinality $k$.

The projective space
$\P(\bigwedge^n\C^{2n})$ has {\it Pl\"ucker coordinates}
indexed by the distributive lattice $\angbin{n}{n}$,
and the Grassmannian $\Gr(n,2n)$ is the
subvariety of $\P(\bigwedge^n\C^{2n})$ defined
by the Pl\"ucker relations.  

\begin{prop}\cite{Ful97,Ho43}
For $\alpha,\beta\in\angbin{n}{n}$ there is a Pl\"ucker
relation
\begin{eqnarray*}
p_{\alpha}p_{\beta}-p_{\alpha\wedge\beta}p_{\alpha\vee\beta}
& + &
\sum_{\gamma\leq\alpha\wedge\beta<\alpha\vee\beta\leq\delta}
 c_{\alpha,\beta}^{\gamma,\delta}p_{\gamma}p_{\delta}=0\,.
\end{eqnarray*}
The defining ideal of $\Gr(n,2n)\subseteq\P(\bigwedge^n\C^{2n})$
is generated by the Pl\"ucker relations.
\end{prop}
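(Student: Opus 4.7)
The plan is to derive the specific straightening relation from iterated short Pl\"ucker identities, then prove generation by a standard straightening-descent argument. The statement is trivial when $\alpha$ and $\beta$ are comparable, so I restrict to incomparable $\alpha,\beta\in\angbin{n}{n}$. Let $r$ be the smallest index at which the sorted tuples disagree, and assume (after a swap) $\alpha_r>\beta_r$. Starting from the classical short Pl\"ucker relation
\[
\sum_{i=1}^{n+1}(-1)^i\,p_{a_1,\dots,a_{n-1},b_i}\,p_{b_1,\dots,\widehat{b_i},\dots,b_{n+1}}\;=\;0,
\]
applied to an index configuration assembled from $\alpha$ and $\beta$ at the first mismatch position, one obtains an identity containing $p_\alpha p_\beta$ together with partial-swap terms involving strictly fewer mismatches. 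Iterating on the mismatch count (inductively straightening each partial-swap term in turn) collapses the construction into a single relation whose first two monomials are $p_\alpha p_\beta - p_{\alpha\wedge\beta}p_{\alpha\vee\beta}$, after adjusting the overall sign.

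For the interval condition, each remaining monomial $p_\gamma p_\delta$ is obtained by sorting the concatenation of a nontrivial, non-total partial swap of a tail of $\alpha$ with a head of $\beta$ and splitting into two $n$-tuples. Such an operation can only decrease the componentwise-sorted minimum (yielding $\gamma\leq\alpha\wedge\beta$) and increase the componentwise-sorted maximum (yielding $\delta\geq\alpha\vee\beta$), as required. For ideal generation, these relations serve as rewriting rules: any product $p_\mu p_\nu$ with $\mu,\nu$ incomparable can be replaced, modulo the Pl\"ucker relations, by a sum of monomials strictly smaller in a well-order on sorted index tuples refined by the interval condition. Combined with the classical standard monomial basis of $\C[\Gr(n,2n)]$~\cite{Ho43,Ful97}, termination of this straightening procedure shows that any element of the Pl\"ucker ideal lies in the ideal generated by the stated relations.

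The main obstacle is the inductive construction and leading-term analysis of the relation: accounting for sign cancellations among the iterated short Pl\"ucker identities and confirming that every non-initial term $p_\gamma p_\delta$ has support satisfying $\gamma\leq\alpha\wedge\beta$ and $\delta\geq\alpha\vee\beta$ requires careful combinatorial bookkeeping on how partial swaps interact with sorting. All other ingredients --- the short Pl\"ucker identity itself, termination of straightening in the chosen order, and the standard monomial basis of $\C[\Gr(n,2n)]$ --- are either elementary multilinear algebra or quoted from the classical references.
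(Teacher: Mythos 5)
The paper offers no proof of this proposition at all --- it is stated as a classical result, with citations to Hodge and Fulton, and the text proceeds directly to the Lagrangian setup. So there is nothing internal to compare against; the relevant comparison is with the cited classical sources. Your sketch reconstructs the standard Hodge straightening argument (iterate short Pl\"ucker exchanges at the first mismatch, argue that non-initial terms move outward in the lattice, invoke the standard monomial basis to conclude generation), which is indeed the canonical route. The step you yourself flag as the difficulty is the genuine gap: a single short Pl\"ucker exchange does not directly produce the claimed form, and after re-sorting an exchanged pair one must verify both that the sorted pair $(\gamma,\delta)$ satisfies $\gamma\leq\alpha\wedge\beta$ and $\delta\geq\alpha\vee\beta$ \emph{and} that the term $p_{\alpha\wedge\beta}p_{\alpha\vee\beta}$ survives all the iterated cancellations with coefficient exactly $-1$. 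This is not a bookkeeping afterthought but the substantive combinatorial content; the classical treatments usually handle it via the shuffle (van der Waerden) form of the Pl\"ucker relations rather than by induction on the number of single exchanges, precisely because tracking sign cancellations across iterations of the short relation is delicate. Your generation argument is fine provided you take the linear independence of standard monomials on $\Gr(n,2n)$ as an independently established input from the cited sources (as you do); otherwise it would be circular, since the standard monomial basis is itself typically derived from these relations.
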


Fix an ordered basis
$\{e_{\bar{n}},\dotsc,e_{\bar1},e_1,\dotsc,e_n\}$
of the vector space $\C^{2n}$, and let
$\Omega:=\sum_{i=1}^ne_{\bari}\wedge e_i$
be a non-degenerate alternating bilinear form.
The {\it Lagrangian Grassmannian} $\LG(n)$ is the set of
maximal isotropic subspaces of $\C^{2n}$ (relative to $\Omega$).

Let $\{h_i:=E_{ii}-E_{\bari\bari}\mid i\in [n]\}$
be the usual basis for the Lie algebra
$\mathfrak{t}$ of $T$~\cite{FH}, and let
$\{h^*_i\mid i\in [n]\}\subseteq\mathfrak{t}^*$ the dual basis.
Observe that $h_{\bari}^*=-h_i^*$.
The weights of any representation of
$\Sp_{2n}(\C)$ are $\Z$-linear combinations
of the fundamental weights
$\omega_i=h_{n{-}i{+}1}^*+\dotsb +h_n^*$. 

The weights of both representations
${\rm L}(\omega_n)\subseteq\bigwedge^n\C^{2n}$
and $\bigwedge^n\C^{2n}$ are of the
form $\omega=\sum_{i=1}^n h_{\alpha_i}^*$ for some
$\alpha\in\tangbin{n}{n}$.  If $\alpha_j=\bar{\alpha}_{j'}$
for some $j,j'\in[n]$, then
$h_{\alpha_j}^*=-h_{\alpha_{j'}}^*$, and thus
the support of $\omega$ does not contain $h_{\alpha_j}^*$.  Hence
the set of all such weights $\omega$ are
indexed by elements $\alpha\in\tangbin{n}{k}$
($k=1,\dotsc,n$) which do not involve both
$i$ and $\bari$ for any $i=1,\dotsc,n$.

Let $V$ be a vector space. For simple alternating
tensors
$v:=v_1\wedge\dotsb\wedge v_l\in\bigwedge^lV$
and
$\varphi:=\varphi_1\wedge\dotsb\wedge\varphi_k\in\bigwedge^kV^*$,
there is a {\it contraction} defined by setting
\[
\varphi\contr v
:=
\left\{
 \begin{array}{cc}
  \sum_{I\in\binom{[l]}{k}}\pm
  v_1\wedge\dotsb\wedge\varphi_1(v_{i_1})\wedge\dotsb\wedge
  \varphi_k(v_{i_k})\wedge\dotsb\wedge v_l, & k\leq l\\
  0, & k>l
 \end{array}
\right.
\]
and extending bilinearly to a map
$\bigwedge^kV^*\otimes\bigwedge^lV
 \rightarrow\bigwedge^{l{-}k}V$.
In particular, for a fixed element
$\Phi\in\bigwedge^kV^*$, we obtain
a linear map
$\Phi\contr\bullet:
 \bigwedge^lV\rightarrow\bigwedge^{l{-}k}V$.

The Lagrangian Grassmannian embeds in $\P{{\rm L}(\omega_n)}$, where
${\rm L}(\omega_n)$ is the irreducible $\Sp_{2n}(\C)$-representation
of highest weight $\omega_n=h_1^*+\dotsb+h_n^*$.
By Proposition~\ref{ctxn.prop}, this representation is
isomorphic to the kernel of the contraction
$\Omega\contr\bullet:
 \bigwedge^{n}\C^{2n}\rightarrow\bigwedge^{n{-}2}\C^{2n}$.
We thus have a commutative diagram of injective maps:
\[
\begin{CD}
  \LG(n)     @>>>   \Gr(n,2n)\\
   @VVV                @VVV  \\
\P {\rm L}(\omega_n) @>>>\P(\bigwedge^n\C^{2n}).\\
\end{CD}\vspace{5pt}
\]
The next proposition implies that
$\LG(n)=\Gr(n,2n)\cap \P {\rm L}(\omega_n)$.

\begin{prop}\label{ctxn.prop}
The dual of the contraction map
\[
\Omega\contr\bullet:
 \bigwedge^n\C^{2n}\rightarrow\bigwedge^{n{-}2}\C^{2n}
\]
is the multiplication map
\[
\Omega\wedge\bullet:
 \bigwedge^{n{-}2}{\C^{2n}}^*\rightarrow\bigwedge^{n}{\C^{2n}}^*\,.
\]
Furthermore, the irreducible representation
${\rm L}(\omega_n)$ is defined
by the ideal generated by the linear forms
\[
L_n:={\rm span}\{\Omega\wedge
 e_{\alpha_1}^*\wedge\dotsb\wedge e_{\alpha_{n{-}2}}^*
 \mid\alpha\in\tangbin{n}{n{-}2}\}.
\]
These linear forms cut out $\LG(n)$
scheme-theoretically in $\Gr(n,2n)$.
Dually, ${\rm L}(\omega_n)=\ker(\Omega\contr\bullet)$.
\end{prop}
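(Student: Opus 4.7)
The plan is to derive all four assertions from a single formal identity relating the interior product and the wedge product. First, the duality claim reduces to the standard identity $\langle \varphi \wedge \psi, v \rangle = \langle \psi, \varphi \contr v \rangle$, valid for $\varphi \in \bigwedge^k V^*$, $\psi \in \bigwedge^{l-k} V^*$, and $v \in \bigwedge^l V$, which is verified on pure tensors by matching combinatorial signs on both sides. Specializing $V = \C^{2n}$ and $\varphi = \Omega$ then exhibits $\Omega \wedge \bullet$ as the transpose of $\Omega \contr \bullet$, giving the first statement.

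Next, to prove ${\rm L}(\omega_n) = \ker(\Omega \contr \bullet)$, I would exploit $\Sp_{2n}$-equivariance. Because $\Omega$ is $\Sp_{2n}$-invariant, the map $\Omega \contr \bullet$ is equivariant and its kernel $K$ is a subrepresentation of $\bigwedge^n \C^{2n}$. A direct computation shows $\Omega \contr (e_1 \wedge \cdots \wedge e_n) = 0$, since no pair $\{i, \bari\}$ occurs in $\{1, \ldots, n\}$; hence $K$ contains the highest-weight vector of weight $\omega_n$ and therefore contains ${\rm L}(\omega_n)$. For the reverse inclusion I would match dimensions by showing $\Omega \wedge \bullet : \bigwedge^{n-2}(\C^{2n})^* \to \bigwedge^n (\C^{2n})^*$ is injective — a Lefschetz-type statement following from the fact that $\Omega^n$ is a nonzero top form — so that by the duality of the first step $\Omega \contr \bullet$ is surjective, yielding $\dim K = \binom{2n}{n} - \binom{2n}{n-2} = \dim {\rm L}(\omega_n)$. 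Since $L_n$ is by construction the image of $\Omega \wedge \bullet$, it is then the annihilator of $K = {\rm L}(\omega_n)$, which gives the second claim.

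For the final scheme-theoretic statement, set-theoretic equality is direct: a decomposable $v = v_1 \wedge \cdots \wedge v_n \in \Gr(n, 2n)$ is annihilated by $L_n$ iff $\Omega \contr v = 0$, iff $\Omega(v_i, v_j) = 0$ for all $i < j$ — because the $(n-2)$-fold wedges obtained by omitting two factors are linearly independent when the $v_i$ are — which says precisely that the span is Lagrangian. To upgrade to scheme-theoretic equality, I would verify that the intersection $\Gr(n, 2n) \cap \P{\rm L}(\omega_n)$ is smooth of dimension $\binom{n+1}{2}$ at every closed point, which forces reducedness and identifies it with $\LG(n)$. At a Lagrangian $W \subseteq \C^{2n}$, $T_W \Gr(n, 2n) = {\rm Hom}(W, \C^{2n}/W)$, and under the isomorphism $\C^{2n}/W \cong W^*$ induced by $\Omega$ the linearization of the $L_n$-equations restricts this to ${\rm Sym}^2 W^* = T_W \LG(n)$. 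I expect this tangent-space computation to be the main technical hurdle, since the preceding steps are formal.
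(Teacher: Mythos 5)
Your proof is correct, but it takes a genuinely different and more self-contained route than the paper. The paper's proof is essentially a citation: it asserts that the adjunction between $\Omega\contr\bullet$ and $\Omega\wedge\bullet$ is straightforward and refers to Weyman's book for the identification of $\ker(\Omega\contr\bullet)$ with ${\rm L}(\omega_n)$ and for the scheme-theoretic statement. You instead assemble the result from first principles: the wedge--contraction adjunction $\langle\varphi\wedge\psi,v\rangle=\langle\psi,\varphi\contr v\rangle$, an $\Sp_{2n}$-equivariance argument locating the highest-weight vector $e_1\wedge\cdots\wedge e_n$ in the kernel, a symplectic hard-Lefschetz injectivity of $\Omega\wedge\bullet$ to force surjectivity of $\Omega\contr\bullet$ (hence the dimension count $\tbinom{2n}{n}-\tbinom{2n}{n-2}=\dim{\rm L}(\omega_n)$), and finally a Zariski tangent-space computation at every Lagrangian $W$ showing $T_W\bigl(\Gr(n,2n)\cap\P{\rm L}(\omega_n)\bigr)\cong{\rm Sym}^2W^*$, which establishes reducedness and hence scheme-theoretic equality. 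Your route buys self-containedness at the cost of having to actually carry out the Lefschetz injectivity and the tangent-space linearization; both are standard but are precisely the content the paper chooses to outsource. One small point worth making explicit: your dimension argument identifies $K=\ker(\Omega\contr\bullet)$ with ${\rm L}(\omega_n)$ by quoting the dimension of the latter; alternatively, once you know $K$ contains the highest-weight line and has the right dimension, you could avoid invoking $\dim{\rm L}(\omega_n)$ a priori by observing $K$ is a subrepresentation and ${\rm L}(\omega_n)\subseteq K$ already, so irreducibility of ${\rm L}(\omega_n)$ plus the dimension count pins it down.
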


\begin{proof}
The proof of first statement is straightforward, and the second
can be found in~\cite[Ch. 3, Exercise 1; Ch. 6, Exercise 24]{Wey}.
\end{proof}

Since the linear forms spanning $L_n$
are supported on variables indexed
by $\alpha\in\angbin{n}{n}$ such that
$\{\bari,i\}\in\alpha$ for some $i\in[n]$,
the set of complementary
variables is linearly independent.
These are indexed by the set ${\mathcal P}_n$
of {\it admissible}
elements of $\tangbin{n}{n}$:
\begin{eqnarray*}
{\mathcal P}_n &:=&
 \bigl\{\alpha\in\tangbin{n}{n}\mid
 i\in\alpha\Leftrightarrow\bari\not\in\alpha\bigr\},
\end{eqnarray*}
and have a simple description in terms of
partitions (Proposition~\ref{sym.prop}).

Consider the lattice $\Z^2$ with coordinates $(a,b)$
corresponding to the point $a$ units to the right of
the origin and $b$ units below the
origin.  Given an increasing sequence
$\alpha\in\angbin{n}{n}$, let $[\alpha]$ be the
lattice path beginning at $(0,n)$, ending at $(n,0)$, and
whose $i^{th}$ step is vertical if $i\in\alpha$ and
horizontal if $i\not\in\alpha$.  We can associate a partition
to $\alpha$ by taking the boxes lying in the
region bounded by the coordinate axes and $[\alpha]$.
For instance, the sequence $\alpha=\bar4\bar223\in\angbin{4}{4}$
is associated to the partition shown in Figure~\ref{4223.ptn}.

\begin{figure}[htb]
\[
\begin{picture}(125,135)
\put(0,0){\includegraphics[scale=0.65]{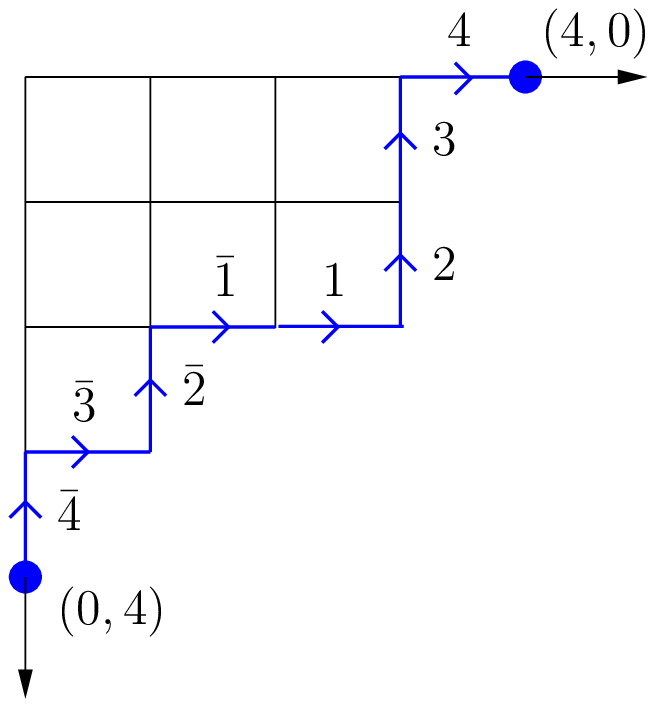}}
\end{picture}
\]
\caption{The partition $(3,3,1)$
associated to $\bar{4}\bar{2}23$.
\label{4223.ptn}}
\end{figure}

\begin{prop}\label{sym.prop}
The bijection between increasing sequences and partitions
induces a bijection between sequences $\alpha$ which
do not contain both $i$ and $\bari$ for any $i\in[n]$, and
partitions which lie inside the $n\times n$
square $(n^n)$ and are symmetric with respect to reflection
about the diagonal $\{(a,a)\mid a\in\Z\}\subseteq\Z^2$.
\end{prop}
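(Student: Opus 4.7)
The plan is to translate both conditions, admissibility of $\alpha$ and self-conjugacy of the partition, into a common symmetry property of the lattice path $[\alpha]$, namely invariance under reflection about the diagonal $\{(a,a)\mid a\in\Z\}$. The bijection between increasing sequences in $\angbin{n}{n}$ and lattice paths from $(0,n)$ to $(n,0)$ with exactly $n$ vertical and $n$ horizontal steps is tautological from the definition of $[\alpha]$, and the standard bijection with partitions in the $n\times n$ square sends a path to the Young diagram bounded by the path and the coordinate axes. Under reflection through the diagonal followed by reversal of orientation, a path from $(0,n)$ to $(n,0)$ is sent to another such path, and this operation intertwines with conjugation of the associated partition (since conjugation swaps rows and columns of the Young diagram). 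Thus the image of $\alpha$ is self-conjugate if and only if $[\alpha]$ equals its own reflection.

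Next, I would unpack this symmetry combinatorially. Labeling the $2n$ steps of $[\alpha]$ by $1,\dotsc,2n$ according to the ordering $\angles{n}=\{\bar n,\dotsc,\bar 1,1,\dotsc,n\}$, the reflection-plus-reversal exchanges step $i$ with step $2n{+}1{-}i$ while swapping vertical and horizontal. Hence $[\alpha]$ is symmetric iff step $i$ is vertical exactly when step $2n{+}1{-}i$ is horizontal. Since position $i$ corresponds to the element $\overline{n{+}1{-}i}$ for $i\le n$ and position $2n{+}1{-}i$ to $n{+}1{-}i$, this condition becomes: for every $j\in[n]$, $\bar\jmath\in\alpha$ iff $j\notin\alpha$, i.e., exactly one of $\{\bar\jmath,j\}$ lies in $\alpha$.

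Finally, because $|\alpha|=n$ and there are exactly $n$ pairs $\{\bar\jmath,j\}$, the admissibility condition that $\alpha$ contain no pair $\{\bar\jmath,j\}$ is equivalent to $\alpha$ containing exactly one element of each such pair, matching the symmetry condition derived above. Chaining the equivalences yields the claimed bijection. The only step requiring real care is verifying that the classical path-to-partition correspondence genuinely exchanges diagonal reflection of paths with conjugation of partitions; this is a direct bookkeeping check from the definition and is where I would be most careful about sign conventions and the orientation of the axes introduced earlier in the section.
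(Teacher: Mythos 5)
Your argument is correct and follows the same basic route as the paper's: translate the self-conjugacy of the partition into a symmetry of the lattice path, and then read that symmetry off in terms of which elements of $\angles{n}$ appear in $\alpha$. The paper's own proof is terse — it simply asserts that the operation ``negate every element and complement in $\angles{n}$'' on sequences corresponds to reflection of the Young diagram about the diagonal (this is the content of Remark~\ref{negpos.rem}), and then observes that admissible sequences are the fixed points of that operation. Your version supplies exactly the bookkeeping the paper omits: the explicit step-by-step analysis of $[\alpha]$ showing that reflection-plus-reversal exchanges position $i$ with position $2n{+}1{-}i$ while swapping vertical and horizontal, and the translation of positions into elements $\overline{n{+}1{-}i}$ and $n{+}1{-}i$. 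You also correctly flag, and then close, the small gap between ``contains at most one of each pair $\{\bar\jmath,j\}$'' (the phrasing in the statement) and ``contains exactly one of each pair'' (what the symmetry condition gives) via the cardinality count $|\alpha|=n$. The one place you identify as requiring care — that the path-to-partition dictionary really does intertwine diagonal reflection with conjugation — is precisely the step the paper takes for granted, so your instinct about where the content lies is accurate.
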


\begin{proof}
The poset ${\mathcal P}_n$ consists
of those $\alpha\in\tangbin{n}{n}$
which are fixed upon negating
each element of $\alpha$ and
taking the complement in $\angles{n}$.
On the other hand, the
composition of these two
operations (in either order)
corresponds to reflecting the
associated diagram about the diagonal.
\end{proof}

\begin{rem}\label{negpos.rem}
We will use an element of
$\tangbin{n}{n}$ and its associated partition
interchangeably.  We denote by $\alpha^t$ the
{\it transpose} partition
obtained by reflecting $\alpha$
about the diagonal in $\Z^2$.
As a sequence, $\alpha^t$ is the complement
of $\{\bar\alpha_1,\dotsc,\bar\alpha_n\}\subseteq\angles{n}$.
We denote by $\alpha_+$ (respectively, $\alpha_-$)
the subsequence of positive (negative) elements
of $\alpha$.
\end{rem}
\begin{defn}\label{invol.adm.def}
The {\it Lagrangian involution} is the map
$\tau:p_{\alpha}\mapsto \sigma_{\alpha}p_{\alpha^t}$,
where $\sigma_{\alpha}:=
 {\rm sgn}(\alpha_+^c,\alpha_+)
  \cdot{\rm sgn}(\alpha_-,\alpha_-^c)=\pm 1$,
and ${\rm sgn}(a_1,\dotsc,a_s)$ denotes the sign of the permutation
sorting the sequence $(a_1,\dotsc,a_s)$.
\end{defn}
For example, if $\alpha=\bar4\bar123$,
then $\alpha_+=23$, $\alpha_-=\bar4\bar1$,
and $\sigma_{\alpha}=1$.

The Grassmannian $\Gr(n,2n)$ has a natural geometric
involution $\bullet^{\perp}:\Gr(n,2n)\rightarrow\Gr(n,2n)$
sending an $n$-plane
$U$ to its orthogonal complement
$U^{\perp}:=\{u\in \C^{2n}\mid
 \Omega(u,u')=0\mbox{{\rm, for all }}u'\in U\}$
with respect to $\Omega$.  The next proposition
relates $\bullet^{\perp}$ to the Lagrangian involution.

\begin{prop}\label{invol.prop}
The map $\bullet^{\perp}:\Gr(n,2n)\rightarrow\Gr(n,2n)$
expressed in Pl\"{u}cker coordinates
coincides with the Lagrangian involution:
\begin{eqnarray}
\Bigl[ p_{\alpha}~\Bigl\lvert~\alpha\in\tangbin{n}{n}\Bigr]
 & \mapsto &
 \Bigl[\sigma_{\alpha}p_{\alpha^t}
  ~\Bigr\rvert~\alpha\in\tangbin{n}{n}\Bigr]\,.
\end{eqnarray}
In particular, the relation
$p_{\alpha}-\sigma_{\alpha}p_{\alpha^t}=0$
holds on $\LG(n)$.
\end{prop}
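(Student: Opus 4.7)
The plan is to express $\bullet^{\perp}$ in Pl\"ucker coordinates by factoring it through the duality isomorphism induced by $\Omega$, and then to verify that the resulting signs agree with $\sigma_{\alpha}$. The form $\Omega$ defines a linear isomorphism $\phi\colon\C^{2n}\to(\C^{2n})^*$ by $\phi(u):=\Omega(u,\cdot)$, which on the ordered basis satisfies $\phi(e_{\bari})=e_i^*$ and $\phi(e_i)=-e_{\bari}^*$. Since $v\in U^{\perp}$ precisely when $\phi(v)$ vanishes on $U$, one has $U^{\perp}=\phi^{-1}(U^{\circ})$, where $U^{\circ}\subseteq(\C^{2n})^*$ denotes the usual annihilator.

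I would first compute the induced map $\bigwedge^n\phi\colon\bigwedge^n\C^{2n}\to\bigwedge^n(\C^{2n})^*$ on the Pl\"ucker basis: for an increasing sequence $\alpha\in\tangbin{n}{n}$, the product $\phi(e_{\alpha_1})\wedge\dotsb\wedge\phi(e_{\alpha_n})$ contributes a factor of $-1$ for each positive entry of $\alpha$, together with the sign of sorting $(-\alpha_1,\dotsc,-\alpha_n)$ into increasing order, yielding $(\bigwedge^n\phi)(e_{\alpha})=\eta_1(\alpha)\,e^*_{\widetilde{\alpha}}$ for a concrete sign $\eta_1(\alpha)$, where $\widetilde{\alpha}$ is the increasing rearrangement of $(-\alpha_1,\dotsc,-\alpha_n)$. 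I would then apply the classical Pl\"ucker-coordinate duality $q_{\beta}(U^{\circ})={\rm sgn}(\beta^c,\beta)\,p_{\beta^c}(U)$, with $\beta^c$ the complement of $\beta$ in $\angles{n}$ and the sign a shuffle sign. Combining the two steps, and using that $\widetilde{\alpha}^c=\alpha^t$ by Remark~\ref{negpos.rem}, produces a projective identity $[p_{\alpha}(U^{\perp})]=[\eta(\alpha)\,p_{\alpha^t}(U)]$ for an explicit sign $\eta(\alpha)$ built out of $\eta_1$ and a shuffle of $\alpha^t$ against $\widetilde{\alpha}$.

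The main obstacle is the combinatorial identity $\eta(\alpha)=\sigma_{\alpha}$, i.e., that the composite sign collapses to ${\rm sgn}(\alpha_+^c,\alpha_+)\cdot{\rm sgn}(\alpha_-,\alpha_-^c)$. Splitting $\alpha=\alpha_-\cup\alpha_+$, the reversal factor and the annihilator shuffle sign separate into independent shuffles of $\alpha_+^c$ against $\alpha_+$ in $[n]$ and of $\alpha_-$ against $\alpha_-^c$ in $-[n]$, recovering $\sigma_{\alpha}$; this bookkeeping is the combinatorial core of the proof. Once the projective identity is established, the first statement follows, and the final assertion is immediate: a Lagrangian subspace satisfies $U=U^{\perp}$, so $p_{\alpha}(U)=\sigma_{\alpha}\,p_{\alpha^t}(U)$ holds identically on $\LG(n)$.
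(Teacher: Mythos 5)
Your approach is correct in its overall structure, but it takes a genuinely different route from the paper's proof. You factor $\bullet^{\perp}$ conceptually as the composite of two standard dualities — the symplectic isomorphism $\phi\colon\C^{2n}\to(\C^{2n})^*$ induced by $\Omega$ (so that $U^{\perp}=\phi^{-1}(U^{\circ})$), followed by the classical annihilator duality $q_{\beta}(U^{\circ})=\pm\,p_{\beta^c}(U)$ on Pl\"ucker coordinates. The paper instead argues entirely in an affine chart: it represents a generic $n$-plane as the row space of $Y=(I\mid X)$, shows by pairing rows under $\Omega$ that $\bullet^{\perp}$ reflects $X$ along the antidiagonal (so $(X^{\perp})_{\rho,\gamma}=X_{\gamma',\rho'}$), and then reads off the sign from the formula expressing each Pl\"ucker coordinate as a signed minor of $X$, concluding by density. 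Your route has the advantage of being coordinate-free and of isolating the two sign contributions (from $\bigwedge^n\phi^{\pm1}$ and from the annihilator shuffle) as independent, structurally meaningful pieces; the paper's affine computation is more concrete and delivers the sign $\sigma_{\alpha}$ in a single step.

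One caveat: you defer the identity $\eta(\alpha)=\sigma_{\alpha}$, calling it ``the combinatorial core,'' but this is precisely where the real work lies — it is the analogue of the explicit minor computation that occupies most of the paper's argument. In particular, note that what enters is the sign of $\bigwedge^n\phi^{-1}$ applied to $e^*_{\widetilde{\alpha}}$ (rather than $\bigwedge^n\phi$ applied to $e_{\alpha}$ as you write), and while these coincide since the sign is $\pm1$, that step should be stated; you also need to confirm that the product of this sign with ${\rm sgn}(\widetilde{\alpha}^c,\widetilde{\alpha})$ does factor as ${\rm sgn}(\alpha_+^c,\alpha_+)\cdot{\rm sgn}(\alpha_-,\alpha_-^c)$, which follows from splitting each shuffle across the partition $\angles{n}=\overline{[n]}\sqcup[n]$ but deserves an explicit verification before the proof is complete.
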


\begin{proof}
The set of $n$-planes in $\C^{2n}$ which do not meet the
span of the first $n$ standard basis vectors is open and
dense in $\Gr(n,2n)$.  Any such $n$-plane is the row
space of an $n\times 2n$ matrix
\begin{eqnarray*}
Y &:=& \left(I\mid X\right)
\end{eqnarray*}
where $I$ is the $n\times n$ identity matrix
and $X$ is a generic $n\times n$ matrix.
We work in the affine coordinates
given by the entries in $X$.
For $\alpha\in\angbin{n}{n}$, denote the
$\alpha^{th}$ minor of $Y$ by
$p_{\alpha}(Y)$.
For a set of indices
$\alpha=\{\alpha_1,\dotsc,\alpha_k\}\subseteq[n]$,
let $\alpha^c:=[n]\setminus\alpha$
be the complement,
$\alpha':=\{n{-}\alpha_k{+}1,\dotsc,n{-}\alpha_1{+}1\}$,
and $\bar\alpha:=\{\bar{\alpha}_1,\dotsc,\bar{\alpha}_k\}$.
Via the correspondence between partitions and
sequences (Proposition~\ref{sym.prop}),
$\alpha^t=\bar{\alpha}^c$.

We claim that $\bullet^{\perp}$ reflects $X$ along
the antidiagonal.  To see this, we simply
observe how the rows of $Y$ pair under
$\Omega$.  For vectors $u,v\in\C^n$, let
$(u,v)\in\C^{2n}$ be the concatenation.  Let
$r_i:=(e_{i},v_{i})\in\C^{2n}$
be the $i^{th}$ row of $Y$.  
For $k\in\angles{n}$, we let
$r_{ik}\in\C$ be the $k^{th}$ entry of $r_i$.
Then, for $i,j\in\angles{n}$,
\begin{eqnarray*}
\Omega(r_i,r_j) &=&
(e_i,v_i)\cdot(-v_j,e_j)^t\\
&=&
 r_{i,n{-}j{+}1}-r_{j,n{-}i{+}1}\,.
\end{eqnarray*}
It follows that the effect
of $\bullet^{\perp}$ on the minor 
$X_{\rho,\gamma}$ of $X$
given by row indices $\rho$ and column indices $\gamma$ is
\begin{eqnarray*}
(X^{\perp})_{\rho,\gamma} &=&
  X_{\gamma',\rho'}\,.
\end{eqnarray*}
Let $\alpha=\bar\epsilon\cup\phi\in\tangbin{n}{n}$, where
$\epsilon$ and $\phi$ are subsets of $[n]$
whose cardinalities sum to $n$.
Combining the above description of $\bullet^{\perp}$
with the identity
\[
 p_{\alpha}(Y)=
  {\rm sgn}(\epsilon^c,\epsilon)
  X_{(\epsilon^c)',\phi}\,,
\]
we have
\begin{eqnarray*}
p_{\alpha}(Y^{\perp})&=& {\rm sgn}(\epsilon^c,\epsilon)
         (X^{\perp})_{(\epsilon^c)',\phi}\\
 &=& {\rm sgn}(\epsilon^c,\epsilon)
         X_{\phi',\epsilon^c}\\
 &=& {\rm sgn}(\epsilon^c,\epsilon){\rm sgn}(\phi,\phi^c)
                p_{(\bar{\phi}^c,\epsilon^c)}(Y)\\
 &=& \sigma_{\alpha}p_{\alpha^t}(Y)\,.
\end{eqnarray*}
It follows that the relation
$p_{\alpha}-\sigma_{\alpha}p_{\alpha^t}=0$
holds on a dense Zariski-open subset
and hence identically on all of $\LG(n)$.
\end{proof}

By Proposition~\ref{invol.prop}, the system
of linear forms
\begin{eqnarray}\label{binom.eqn}
L_n'&:=&{\rm span}\{ p_{\alpha}-\sigma_{\alpha}p_{\alpha^t}
 \mid\alpha\in\tangbin{n}{n}\}
\end{eqnarray}
defines $\LG(n)\subseteq\Gr(n,2n)$ {\it set}-theoretically.
Since $\LG(n)$ lies in no hyperplane of $\P {\rm L}(\omega_n)$,
$L_n'$ is a linear subspace of the span $L_n$ of the
defining equations of ${\rm L}(\omega_n)\subseteq\bigwedge^n\C^{2n}$.
The generators of $L'_n$ given in~(\ref{binom.eqn})
suggest that homogeneous coordinates
for the Lagrangian Grassmannian should be
indexed by some sort of quotient (which we will call
${\mathcal D}_n$) of the poset ${\mathcal P}_n$.  The correct
notion is that of a doset
(Definition~\ref{doset.def}).
An important set of representatives for ${\mathcal D}_n$
in ${\mathcal P}_n$ is the set of {\it Northeast}
partitions (Proposition~\ref{fiber.prop}).

\begin{rem}\label{strict.rem}
 The set of {\it strict} partitions with at most $n$
 rows and columns is commonly used to index
 Pl\"ucker coordinates for the Lagrangian Grassmannian.
 Given a symmetric partition $\alpha\in{\mathcal P}_n$,
 we can obtain a strict partition by first removing the
 boxes of $\alpha$ which lie below the diagonal, and
 then left-justifying the remaining boxes.  This gives
 a bijection between the two sets of partitions.
\end{rem}

By Proposition~\ref{sym.prop}, we may identify elements of
$\tangbin{n}{n}$ with partitions lying in the $n\times n$
square $(n^n)$, and ${\mathcal P}_n$ with the set of
symmetric partitions.
Define a map
$\pi_n:\tangbin{n}{n}
 \rightarrow{\mathcal P}_n\times{\mathcal P}_n$
by
$\pi_n(\alpha):=(\alpha\wedge\alpha^t,\alpha\vee\alpha^t)$.
Let ${\mathcal D}_n$ be the image of $\pi_n$.
It is called the set of {\it admissible pairs},
and is a subset of
${\mathcal O}_{{\mathcal P}_n}:=
 \{(\alpha,\beta)\in{\mathcal P}_n\times{\mathcal P}_n
 \mid\alpha\leq\beta\}$.
The image of ${\mathcal P}_n\subseteq\tangbin{n}{n}$
under $\pi_n$ is the diagonal
$\Delta_{{\mathcal P}_n}\subseteq{\mathcal P}_n\times{\mathcal P}_n$.

To show that ${\mathcal D}_n$ indexes coordinates on $\LG(n)$,
we will work with a convenient set of
representatives of the fibers of $\pi_n$.
The fiber over $(\alpha,\beta)\in{\mathcal D}_n$ can be
described as follows.  The lattice paths $[\alpha]$ and
$[\beta]$ must meet at the diagonal.  Since $\alpha$
and $\beta$ are symmetric, they are determined by the segments
of their associated paths to the right and above the diagonal.
Let $\Pi(\alpha,\beta)$ be the set
of boxes bounded by these segments.
Taking $n=4$ for example, $\Pi(\bar4\bar213,\bar3\bar124)$
consists of the two shaded boxes above the diagonal in
Figure~\ref{4321.321.fig}.  The lattice path
$[\bar4\bar213]$ is above and to the left
of the path $[\bar3\bar124]$.
\begin{figure}[htb]
\[
\begin{picture}(100,80)(0,0)
\put(32,0){\includegraphics[scale=0.2]{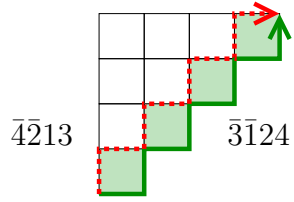}}
\put(0,20){$\bar4\bar213$}
\put(82,20){$\bar3\bar124$}
\end{picture}
\]
\caption{The paths associated to $\bar4\bar213$ and
         $\bar3\bar124$ in ${\mathcal P}_4$.}
\label{4321.321.fig}
\end{figure}

For any partition $\alpha\subseteq(n^n)$,
the set $\alpha_{+}\subseteq\alpha$
consists of the boxes of $\alpha$ on or
above the main diagonal, and
$\alpha_{-}\subseteq\alpha$ consists
of the boxes of $\alpha$ on or below
the main diagonal ({\it cf.} Remark~\ref{negpos.rem}).
Similarly, let
$\Pi_+(\alpha,\beta)\subseteq\Pi(\alpha,\beta)$
be the set of boxes above the diagonal and let
$\Pi_-(\alpha,\beta)\subseteq\Pi(\alpha,\beta)$
be the set of boxes below the diagonal.

A subset
$S\subseteq(n^n)$ of boxes is {\it disconnected}
if $S=S'\sqcup S''$ and no box of $S'$ shares
an edge with a box of $S''$.  A subset
$S$ is connected if it is not disconnected.  Let
$\bigsqcup_{i=1}^kS_i$ be the decomposition of
$\Pi_+(\alpha,\beta)$ into its connected components
(so that $\bigsqcup_{i=1}^kS_i^t$ is the decomposition
of $\Pi_-(\alpha,\beta)$).
\begin{figure}[htb]
\[
\begin{picture}(200,100)(0,0)
\put(0,20){\includegraphics[scale=0.2]{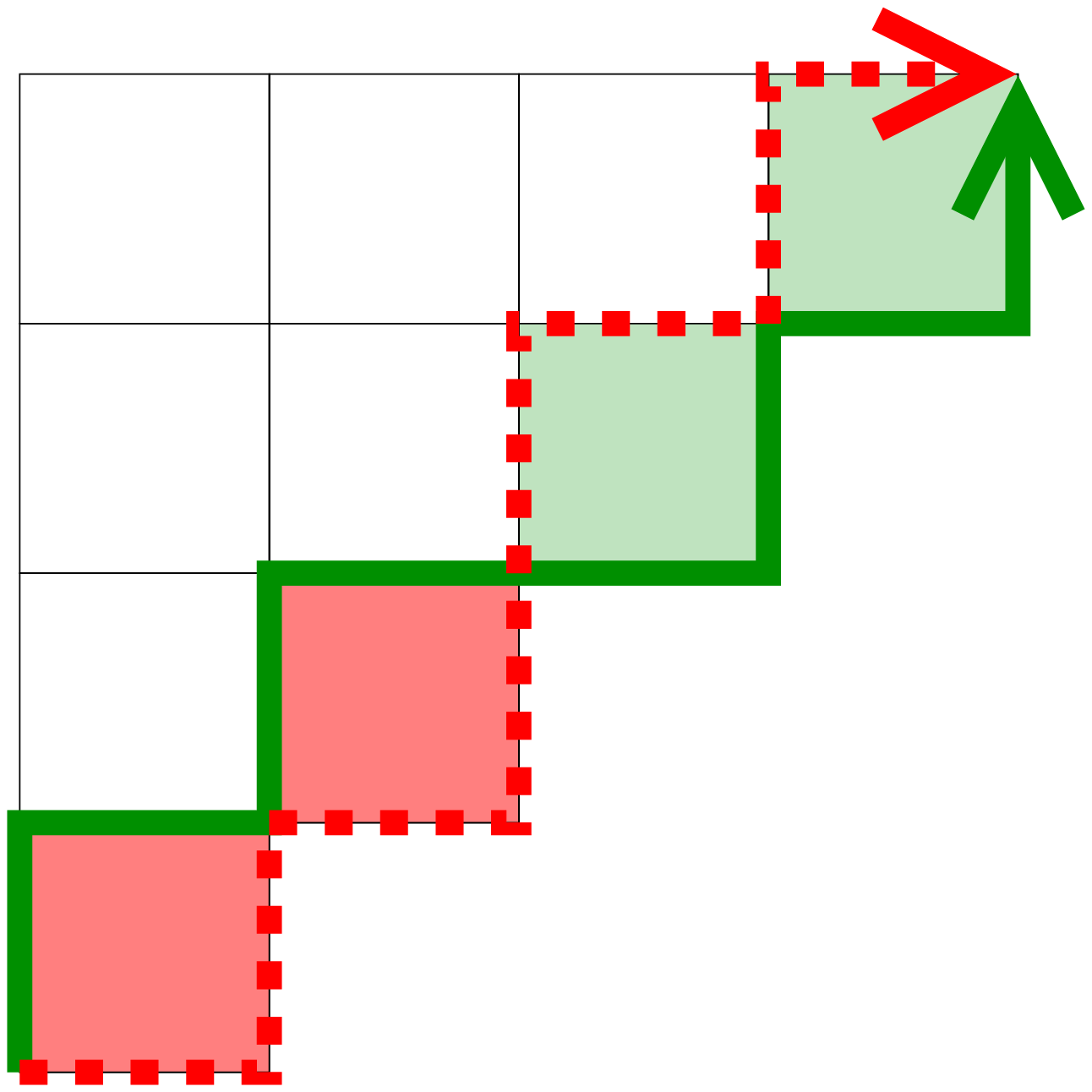}}
\put(0,0){$\bar4\bar224$ and $\bar3\bar113$}
\put(140,20)
 {\includegraphics[scale=0.2]{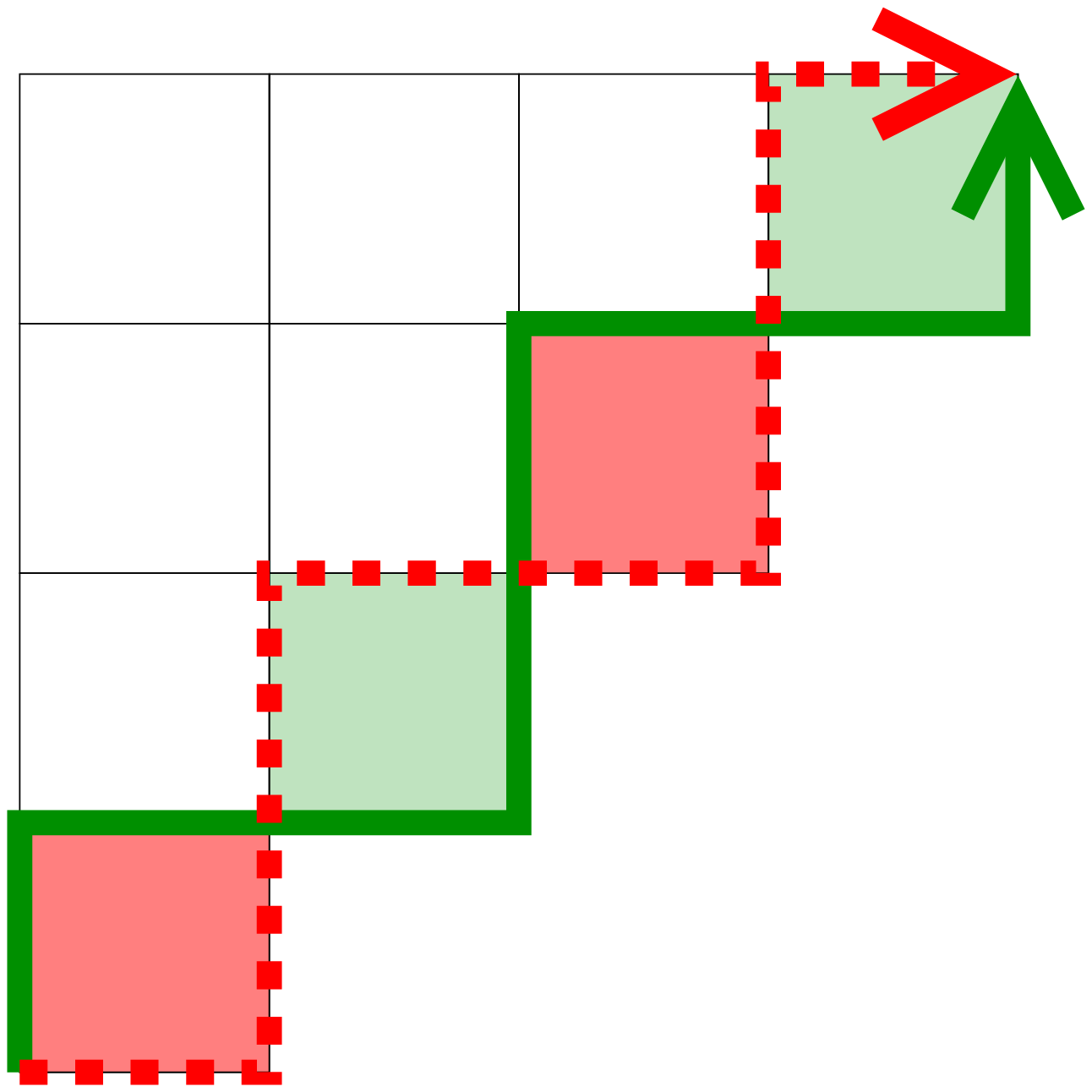}}
\put(140,0){$\bar4\bar114$ and $\bar3\bar223$}
\end{picture}
\]
\caption{Elements of $\pi_n^{-1}(\bar4\bar213,\bar3\bar124)$.
\label{4321.321.fiber.fig}}
\end{figure}

Any element $\gamma$ of the fiber $\pi_n^{-1}(\alpha,\beta)$
is obtained by choosing a subset $I\subseteq[k]$ and setting
\begin{eqnarray*}
\gamma &=& \alpha\cup\Bigl(\bigcup_{i\not\in I}S_i^t\Bigr)\cup
 \Bigl(\bigcup_{i\in I}S_i\Bigr).
\end{eqnarray*}
The elements of 
$\pi_n^{-1}(\bar4\bar213,\bar3\bar124)$ are shown in
Figure~\ref{4321.321.fiber.fig}.

\begin{defn}\label{ne.def}
A partition $\alpha$ {\it Northeast} if
$\alpha_-^t\subseteq\alpha_+$ and
{\it Southwest} if its transpose is Northeast.
\end{defn}

For example, $\bar4\bar224\in{\mathcal P}_4$
is Northeast while
$\bar4\bar114\in{\mathcal P}_4$
is neither Northeast nor Southwest.  
We summarize these ideas in the following proposition.

\begin{prop}\label{fiber.prop}
Let $(\alpha,\beta)\in{\mathcal D}_n$ be an element of the
image of $\pi_n$.  Then $\pi_n^{-1}(\alpha,\beta)$ is in
bijection with the set of subsets of
connected components of $\Pi_+(\alpha,\beta)$.
There exists a unique Northeast element
of $\pi_n^{-1}(\alpha,\beta)$;
namely, the element corresponding to all the connected
components.  Similarly, there is a unique Southwest
element corresponding to the empty set of components.
\end{prop}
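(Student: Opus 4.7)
The plan is to parametrize $\pi_n^{-1}(\alpha,\beta)$ by subsets $T \subseteq \Pi_+(\alpha,\beta)$ and show that the valid $T$ are exactly the unions of edge-connected components of $\Pi_+$. The starting observation is that $\Pi := \beta\setminus\alpha$ has no diagonal boxes: any diagonal box $b = b^t$ lying in $\beta = \gamma\vee\gamma^t$ must lie in both $\gamma$ and $\gamma^t$, hence in $\alpha = \gamma \wedge \gamma^t$. Therefore $\Pi = \Pi_+ \sqcup \Pi_+^t$ decomposes into symmetric pairs of strictly off-diagonal boxes, and each $\gamma$ in the fiber has the unique form $\gamma_T := \alpha \cup T \cup (\Pi_+ \setminus T)^t$ for some $T \subseteq \Pi_+$. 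The problem reduces to determining which $T$ make $\gamma_T \cap \Pi = T \cup (\Pi_+ \setminus T)^t$ an order ideal in $\Pi$ under the componentwise partial order.

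For the forward direction, if $b, b' \in \Pi_+$ are edge-adjacent with $b \in T$ and $b' \notin T$, then $b$ and $(b')^t$ both lie in $\gamma_T$; the order-ideal condition applied to $(b')^t$ forces its neighbor toward the origin, namely $b^t$, to lie in $\gamma_T$. But $b^t$ is not in $\alpha$ (since $b \notin \alpha$), not in $T$ (wrong side of the diagonal), and not in $(\Pi_+\setminus T)^t$ (since $b \in T$), a contradiction. Hence adjacent boxes of $\Pi_+$ share their $T$-membership, so $T$ is a union of components.

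For the converse, I would establish two combinatorial lemmas. \emph{(i)} Any two comparable boxes of $\Pi_+$ lie in the same connected component: given $b \leq b'$ in $\Pi_+$, the row of $b$ between the two column indices and the column of $b'$ between the two row indices lie entirely in $\Pi_+$, since an intervening box of $\alpha$ would, by $\alpha$ being Young, force $b \in \alpha$. \emph{(ii)} No box of $\Pi_+$ is comparable to any box of $\Pi_-$: if $(k,\ell)\in\Pi_+$ and $(c_1,c_2)\in\Pi_-$ with $(k,\ell)\leq(c_1,c_2)$, then $k < \ell \leq c_2 < c_1$, the symmetry of $\beta$ gives $(c_2,c_2)\in\beta$, the $\mathcal{D}_n$-membership implies $(c_2,c_2)\in\alpha$ so $\alpha_{c_2}\geq c_2$, and monotonicity of $\alpha$ yields $\alpha_k\geq\alpha_{c_2}\geq\ell$, contradicting $(k,\ell)\in\Pi_+$; the opposite comparability is ruled out symmetrically. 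Given \emph{(i)} and \emph{(ii)}, the order-ideal condition on $\gamma_T$ is verified by case analysis on whether $c\in\gamma_T\cap\Pi$ and $(k,\ell)\leq c$ lie on the same side of the diagonal: cross-region constraints are vacuous by \emph{(ii)}, and same-region constraints are handled by \emph{(i)} together with the assumption that $T$ is a union of components.

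The Northeast condition $\gamma_-^t\subseteq\gamma_+$ becomes, using $\gamma_+ = \alpha_+\cup T$ and $\gamma_-^t = \alpha_-^t \cup (\Pi_+\setminus T) = \alpha_+\cup(\Pi_+\setminus T)$ (by symmetry of $\alpha$), the identity $\Pi_+\setminus T\subseteq T$, equivalently $T=\Pi_+$, corresponding to taking all connected components. The Southwest case is dual and gives $T=\emptyset$. The main obstacle is the converse direction of the combinatorial claim — particularly lemma \emph{(ii)}, which depends crucially on the hypothesis $(\alpha,\beta)\in\mathcal{D}_n$ (rather than merely in $\mathcal{O}_{\mathcal{P}_n}$) via the diagonal-containment implication $(i,i)\in\beta\Rightarrow(i,i)\in\alpha$; without this, $\Pi_+$ and $\Pi_-$ boxes could be comparable and the bijection with component subsets would fail.
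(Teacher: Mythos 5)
Your proof is correct and formalizes precisely the sketch the paper gives before the proposition (the paper offers no actual proof, only the informal observation that the fiber is parametrized by choices of components, together with "the lattice paths $[\alpha]$ and $[\beta]$ must meet at the diagonal"). The ingredients you supply — that $\beta\setminus\alpha$ avoids the diagonal exactly because $(\alpha,\beta)$ lies in the image of $\pi_n$, that comparable boxes of $\Pi_+$ lie in a common connected component, and that no box of $\Pi_+$ is comparable to any box of $\Pi_-$ — are the justification the paper leaves implicit. One very small presentational remark: in the forward direction you treat the case where $b^t$ is the origin-ward neighbor of $(b')^t$; the opposite adjacency ($b'<b$) gives an even more direct contradiction by applying the order-ideal condition to $b\in T\subseteq\gamma_T$, forcing $b'\in\gamma_T$, and you should note it for completeness.
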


\begin{ex}\label{lg4}
The Lagrangian Grassmannian $\LG(4)\subseteq\Gr(4,8)$
is defined by the ideal $L_4$.  From the explicit
linear generators given in Proposition~\ref{ctxn.prop},
it is evident that
$L_4\subseteq\bigwedge^4{\C^8}^*$ is spanned by weight
vectors.  For example, the generators of
$L_4\cap(\bigwedge^4{\C^8}^*)_0$ are
the vectors of weight zero:
\begin{equation}\label{lg4lin}
\begin{array}{ccccccc}
\Omega\wedge{p_{\bar11}} & = &
 p_{\bar4\bar114}&+&p_{\bar3\bar113}&+&p_{\bar2\bar112}\\
\Omega\wedge{p_{\bar22}} & = &
 p_{\bar4\bar224}&+&p_{\bar3\bar223}&+&p_{\bar2\bar112}\\
\Omega\wedge{p_{\bar33}} & = &
 p_{\bar4\bar334}&+&p_{\bar3\bar223}&+&p_{\bar3\bar113}\\
\Omega\wedge{p_{\bar44}} & = &
 p_{\bar4\bar334}&+&p_{\bar4\bar224}&+&p_{\bar4\bar114}\\
\end{array}
\end{equation}
The following linear forms lie in the span of the right-hand
side of~(\ref{lg4lin}):
\begin{equation}\label{lg4bin}
\begin{array}{ccc}
p_{\bar2\bar112}  +  p_{\bar4\bar334} & 
p_{\bar3\bar113}  +  p_{\bar4\bar224} & 
p_{\bar3\bar223} + p_{\bar4\bar114}\\[5pt]
&
p_{\bar4\bar114} + p_{\bar4\bar224} + p_{\bar4\bar334}
&
\end{array}
\end{equation}
Three of the linear forms in~(\ref{lg4bin})
are supported on a pair $\{p_{\alpha},p_{\alpha^t}\}$,
and the remaining linear form expresses
the Pl\"ucker coordinate $p_{\bar4\bar114}$
as a linear combination of
coordinates indexed by Northeast partitions
(in general, this follows from
Lemma~\ref{basis.lemma}).

Since each pair 
$\{p_{\alpha},p_{\alpha^t}\}$ is
incomparable, there is a Pl\"ucker
relation which, after reduction
by the linear forms~(\ref{lg4lin}),
takes the form
\[
 \pm p_{\alpha}^2-p_{\beta}p_{\gamma}+\text{lower order terms}
\]
where $\beta:=\alpha\wedge\alpha^t$
and $\gamma:=\alpha\vee\alpha^t$ are respectively the
meet and join of $\alpha$ and $\alpha^t$. Defining
$p_{(\beta,\gamma)}:=p_{\alpha}=\sigma_{\alpha} p_{\alpha^t}$
we can regard such an equation as giving a rule
for rewriting $p_{(\beta,\gamma)}^2$ as a linear combination
of monomials supported on a chain.  These general case
is treated in Section~\ref{str8law.sec}.
\end{ex}
%
\section{Algebras with straightening law}\label{asl.sec}

\subsection{Generalities}
The following definitions are fundamental.

Let ${\mathcal P}$ be a poset,
$\Delta_{\mathcal P}$
the diagonal in ${\mathcal P}\times{\mathcal P}$,
and ${\mathcal O}_{\mathcal P}:=
\{(\alpha,\beta)\in{\mathcal P}\times{\mathcal P}
\mid\alpha\leq\beta\}$ the subset of
${\mathcal P}\times{\mathcal P}$
defining the order relation on ${\mathcal P}$.

\begin{defn}\label{doset.def}
A {\it doset} on ${\mathcal P}$
is a set ${\mathcal D}$ such that $\Delta_{\mathcal P}\subseteq
{\mathcal D}\subseteq{\mathcal O}_{\mathcal P}$, and if
$\alpha\leq\beta\leq\gamma$, then $(\alpha,\gamma)\in{\mathcal D}$
if and only if $(\alpha,\beta)\in{\mathcal D}$ and
$(\beta,\gamma)\in{\mathcal D}$.  The ordering on ${\mathcal D}$
is given by $(\alpha,\beta)\leq(\gamma,\delta)$
if and only if $\beta\leq\gamma$ in
${\mathcal P}$.  We call ${\mathcal P}$
the {\it underlying poset}.
\end{defn}
\begin{rem}
A doset is not usually a poset: the ordering is
non-reflexive except in the trivial case when
${\mathcal D}=\Delta_{{\mathcal P}}$.  In any
case, we may regard
${\mathcal P}\cong\Delta_{{\mathcal P}}$
as a subset of ${\mathcal D}$.
\end{rem}
The {\it Hasse diagram} of a doset
${\mathcal D}$ on ${\mathcal P}$
is obtained from the Hasse diagram
of ${\mathcal P}\subseteq{\mathcal D}$ by
drawing a double line for
each cover $\alpha\lessdot\beta$ such that
$(\alpha,\beta)$ is in ${\mathcal D}$.
The defining property of a doset implies that
we can recover all the information in the doset
from its Hasse diagram.  See Figure~\ref{D24.fig}
in Section~\ref{admpairs.ssec} for an example.

An algebra with straightening law is
an algebra generated by indeterminates
$p_{\alpha}$, $\alpha\in{\mathcal D}\}$
indexed by a (finite)
doset ${\mathcal D}$ with a basis consisting
of {\it standard} monomials
supported on a chain.
That is, a monomial
$p_{(\alpha_1,\beta_1)}\dotsb p_{(\alpha_k,\beta_k)}$
is standard if $\alpha_1\leq\beta_1\leq\alpha_2\leq\beta_2
\leq\dotsb\leq\alpha_k\leq\beta_k$.
Furthermore, monomials which are not
standard are subject to
certain {\it straightening relations},
as described in the following definition.

\begin{defn}\label{asl.def}
Let ${\mathcal D}$ be a doset.
A graded $\C$-algebra $A=\bigoplus_{q\geq 0}A_q$ is an
{\it algebra with straightening law} on ${\mathcal D}$
if there is an injection
${\mathcal D}\owns(\alpha,\beta)
 \mapsto p_{(\alpha,\beta)}\in A_1$
such that:

\begin{enumerate}
\item 
      The set $\{p_{(\alpha,\beta)}\mid
      (\alpha,\beta)\in{\mathcal D}\}$
      generates $A$.
\item 
      The set of standard monomials are a $\C$-basis of $A$.
\item
      For any monomial
      $m=p_{(\alpha_1,\beta_1)}\dotsb p_{(\alpha_k,\beta_k)}$,
      $(\alpha_i,\beta_i)\in{\mathcal D}$ and $i=1,\dotsc,k$, if
\[
m=\sum_{j=1}^N
 c_jp_{(\alpha_{j1},\beta_{j1})}\dotsb p_{(\alpha_{jk},\beta_{jk})},
\]
     is the unique expression of $m$
     as a linear combination of
     distinct standard monomials,
     then the sequence
$(\alpha_{j1}\leq\beta_{j1}\leq\dotsb\leq\alpha_{jk}\leq\beta_{jk})$
     is lexicographically smaller than
$(\alpha_1\leq\beta_1\leq\dotsb\leq\alpha_k\leq\beta_k)$.
     That is, if $\ell\in[2k]$ is minimal such
     that $\alpha_{j\ell}\neq\alpha_{\ell}$,
     then $\alpha_{j\ell}<\alpha_{\ell}$.
\item
      If $\alpha_1\leq\alpha_2\leq\alpha_3\leq\alpha_4$
      are such that for some permutation
      $\sigma\in S_4$ we have
      $(\alpha_{\sigma(1)},\alpha_{\sigma(2)})\in{\mathcal D}$
      and
      $(\alpha_{\sigma(3)},\alpha_{\sigma(4)})\in{\mathcal D}$,
      then
\begin{eqnarray}\label{perm.str.rel}
 p_{(\alpha_{\sigma(1)},\alpha_{\sigma(2)})}
  p_{(\alpha_{\sigma(3)},\alpha_{\sigma(4)})}
 =\pm p_{(\alpha_1,\alpha_2)}p_{(\alpha_3,\alpha_4)}
  +\sum_{i=1}^Nr_im_i
\end{eqnarray}
     where the $m_i$ are quadratic
     standard monomials distinct from
     $p_{(\alpha_1,\alpha_2)}p_{(\alpha_3,\alpha_4)}$.
\end{enumerate}
\end{defn}

The ideal of straightening relations is generated
by homogeneous quadratic forms
in the $p_{\alpha}$ ($\alpha\in{\mathcal D}$),
so we may consider the projective variety
$X:={\rm Proj}~A$ they define.
For each $\alpha\in{\mathcal P}$,
we have the {\it Schubert variety}
\[
X_{\alpha}:=\{x\in X\mid
 p_{(\beta,\gamma)}(x)=0 \text{ for } \gamma\not\leq\alpha\}
\]
and the {\it dual Schubert variety}
\[
X^{\alpha}:=\{x\in X\mid
 p_{(\beta,\gamma)}(x)=0 \text{ for } \beta\not\geq\alpha\}.
\]

\begin{rem}
If $\alpha\lessdot\beta$ and $(\alpha,\beta)\in{\mathcal D}$,
then the multiplicity of $X_{\alpha}$ in $X_{\beta}$ is $2$,
and likewise for the multiplicity of $X^{\alpha}$ in $X^{\beta}$.
This fact will arise in Section~\ref{enumgeom.sec} when we consider
enumerative questions.
\end{rem}

We recall the case when $X$ is the
Grassmannian of $k$-planes in
$\C^n$, whose coordinate ring is an algebra
with straightening law on the poset $\binom{[n]}{k}$.

For each $i\in[n]$, set $F_i:=\langle e_1,\dotsc,e_i\rangle$
and $F'_i:=\langle e_n,\dotsc,e_{n{-}i{+}1}\rangle$,
where $\langle\,\dotsb\,\rangle$ denotes linear span and
$\{e_1,\dotsc,e_n\}$ is the standard basis of $\C^n$.
We call $\Fdot:=\{F_1\subseteq\dotsb\subseteq F_n\}$ the
{\it standard coordinate} flag, and
$\Fdot':=\{F'_1\subseteq\dotsb\subseteq F'_n\}$ the
{\it opposite} flag.

For
$\alpha\in\binom{[n]}{k}$,
let
$X_{\alpha}:=
 \{E\in\Gr(k,n)\mid\dim(E\cap F_{\alpha_i})\geq i,~{\text{for}}~i=1,\dotsc,k\}$
be the {\it Schubert variety} indexed by $\alpha$ and
$X^{\alpha}:=
 \{E\in\Gr(k,n)\mid
  \dim(E\cap F'_{n{-}\alpha_i{+}1})\geq k{-}i{+}1,~{\text{for}}~i=1,\dotsc,k\}$
the {\it dual Schubert variety}.

We represent any $k$-plane $E\in\Gr(k,n)$
as the row space of a $k\times n$ matrix.
Furthermore, any such $k$-plane $E$ is
the row space of a unique reduced row
echelon matrix.  The Schubert variety
$X_{\alpha}$ consists of
precisely the $k$-planes $E$
such that the pivot in row $i$ is weakly
to the left of column $\alpha_i$.  Since the Pl\"ucker
coordinate $p_{\beta}(E)$ is just
the $\beta^{th}$ maximal minor of
this matrix, we see that
$E\in X_{\alpha}$ if and only if
$p_{\beta}(E)=0$ for all $\beta\not\leq\alpha$; hence
our definition of the Schubert variety
$X_{\alpha}$ (and by a similar argument, the
dual Schubert variety $X^{\alpha}$)
for the Grassmannian agrees with
the general definition above.

For a fixed projective variety $X\subseteq\P^n$,
there are many homogeneous ideals
which cut out $X$ set-theoretically.
However, there exists a unique
such ideal which is saturated and radical.
Under mild hypotheses, any ideal
generated by straightening relations
on a doset is saturated and radical.
The proofs of Theorems~\ref{saturated.thm}
and~\ref{reduced.thm} illustrate
the usefulness of Schubert varieties in the study of
an algebra with straightening law.

\begin{defn}
An ideal $I\subseteq\C[x_0,\dotsc,x_n]$
is {\it saturated} if, given a polynomial
$f\in\C[x_0,\dotsc,x_n]$ and an integer $N\in\N$,
$x_i^Nf=0\mod~I$ for all
$i=0,\dotsc,n$ implies that
$f=0\mod~I$.
\end{defn}

\begin{defn}
An ideal $I\subseteq\C[x_0,\dotsc,x_n]$
is {\it radical} if, given a plynomial
$f\in\C[x_0,\dotsc,x_n]$ and an integer $N\in\N$,
$f^N=0\mod~I$ implies that $f=0\mod~I$.
\end{defn}

The next two theorems concern an algebra with
straightening law on a doset ${\mathcal D}$
with underlying poset ${\mathcal P}$.  We 
write $A=\C[{\mathcal D}]/J$ for this algebra,
where $J$ is the ideal generated by the
straightening relations.

\begin{thm}\label{saturated.thm}
Let ${\mathcal D}$ be a doset whose underlying poset
has a unique minimal element $\alpha_0$.  Then any
ideal $J$ of straightening relations on ${\mathcal D}$
is saturated.
\end{thm}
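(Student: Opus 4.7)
The plan is to reduce the saturation condition to the assertion that multiplication by a single well-chosen generator is injective on $A = \C[\mathcal{D}]/J$. The natural candidate is $p_{(\alpha_0,\alpha_0)}$, the generator indexed by the diagonal pair at the unique minimum of $\mathcal{P}$; note that $(\alpha_0,\alpha_0) \in \mathcal{D}$ because $\Delta_{\mathcal{P}} \subseteq \mathcal{D}$. Once I know this generator is a nonzerodivisor, the conclusion is immediate: suppose $f \in \C[\mathcal{D}]$ satisfies $p_{(\alpha,\beta)}^N f \equiv 0 \pmod{J}$ for every $(\alpha,\beta) \in \mathcal{D}$. Specializing to $(\alpha,\beta) = (\alpha_0,\alpha_0)$ gives $p_{(\alpha_0,\alpha_0)}^N f \equiv 0 \pmod{J}$, and since a nonzerodivisor remains a nonzerodivisor after taking powers, $f \equiv 0 \pmod{J}$.

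To establish injectivity of multiplication by $p_{(\alpha_0,\alpha_0)}$ on $A$, I would use the standard monomial basis supplied by Property (2) of Definition~\ref{asl.def}. Any class $\bar f \in A$ has a unique expression $\bar f = \sum_i c_i m_i$ as a $\C$-linear combination of distinct standard monomials $m_i = p_{(\gamma_{i1},\delta_{i1})} \cdots p_{(\gamma_{ik_i},\delta_{ik_i})}$ with $\gamma_{i1} \leq \delta_{i1} \leq \cdots \leq \gamma_{ik_i} \leq \delta_{ik_i}$ in $\mathcal{P}$. Because $\alpha_0$ is the unique minimum of $\mathcal{P}$, one has $\alpha_0 \leq \gamma_{i1}$ for every $i$, so by the ordering on $\mathcal{D}$ we obtain $(\alpha_0,\alpha_0) \leq (\gamma_{i1},\delta_{i1})$. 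Hence each prepended monomial $p_{(\alpha_0,\alpha_0)} m_i$ is itself standard. The map $m_i \mapsto p_{(\alpha_0,\alpha_0)} m_i$ is injective on the set of standard monomials (it is literally multiplication by a fixed symbol), so $\{p_{(\alpha_0,\alpha_0)} m_i\}$ is a set of distinct standard monomials, and therefore linearly independent in $A$. It follows that $p_{(\alpha_0,\alpha_0)} \bar f = \sum_i c_i \, p_{(\alpha_0,\alpha_0)} m_i = 0$ forces every $c_i = 0$, hence $\bar f = 0$.

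The main (and essentially only) obstacle is really a bookkeeping check: one must verify that prepending $p_{(\alpha_0,\alpha_0)}$ preserves the standard monomial property, which boils down to $(\alpha_0,\alpha_0) \leq (\gamma,\delta)$ for every $(\gamma,\delta) \in \mathcal{D}$. Both halves follow directly from $\alpha_0$ being the minimum of $\mathcal{P}$ together with the definition of the doset order. Everything else is a formal consequence of the standard monomial basis and the saturation condition as stated; no use of axiom (3) or (4) of Definition~\ref{asl.def} is needed.
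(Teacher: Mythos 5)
Your proposal is correct and matches the paper's own argument in all essentials: both proofs express $f$ in the standard monomial basis, observe that multiplying by the generator at the unique minimum ($p_{\alpha_0}$, identified with $p_{(\alpha_0,\alpha_0)}$ via $\Delta_{\mathcal P}\subseteq\mathcal D$) preserves standardness and is injective on standard monomials, and conclude that $p_{\alpha_0}^N f\not\equiv 0\pmod J$ for every $N$. Your framing via a nonzerodivisor is a slight repackaging of what the paper states directly, and your remark that only axiom (2) of Definition~\ref{asl.def} is needed is accurate.
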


\begin{proof}
Let $f\not\in J$.  Modulo $J$, we may write
$f=\sum_{i=1}^ka_im_i$ where the $m_i$ are
(distinct) standard monomials, and $a_i\in\C$.
For each $N\in\N$,
$p_{\alpha_0}^Nf=\sum_{i=1}^ka_ip_{\alpha_0}^Nm_i$ is
a linear combination of standard monomials, since
${\rm supp}~m_i\cup\{\alpha_0\}$ is a chain
for each $i\in[k]$.  It is non-trivial since
$p_{\alpha_0}^Nm_i=p_{\alpha_0}^Nm_j$
implies $i=j$.  Thus $p_{\alpha_0}^Nf\not\in J$ for any
$N\in\N$.
\end{proof}

\begin{thm}\label{reduced.thm}
An algebra with straightening law on a doset is reduced.
\end{thm}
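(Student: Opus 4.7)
The plan is to argue by contradiction. Suppose $f\in A$ is a nonzero nilpotent with $f^N=0$. Using axiom (2), write $f=\sum_{i=1}^s a_i m_i$ uniquely as a linear combination of standard monomials with coefficients $a_i\in\C^*$. The goal is to exhibit a nonzero standard monomial in the expansion of $f^N$, contradicting $f^N=0$.

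The strategy is to track a ``leading'' standard monomial. Introduce the lex order on chain sequences appearing in axiom (3), and let $m^*=m_j$ be the lex-largest standard monomial in the support of $f$. Expanding $f^N$ as a sum of products $a_I m_I=a_{i_1}\cdots a_{i_N}m_{i_1}\cdots m_{i_N}$, each $m_I$ is a (generally non-standard) monomial that must be straightened via axiom (3) to a combination of standard monomials. The chain sequence of $m_I$, obtained by sorting the concatenated sequences of the $m_{i_k}$, is lex-maximal precisely when every $i_k=j$, so products other than $(m^*)^N$ contribute standard monomials with strictly smaller chain sequences. Tracking the lex-maximal standard monomial in the straightening of $(m^*)^N$ itself---using axiom (4) to control the leading coefficient of each quadratic straightening step---would then isolate a unique lex-maximal standard monomial in the expansion of $f^N$ with coefficient $a_j^N$ times $\pm 1$, hence nonzero.

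The main obstacle is verifying that the leading coefficient of the standard expansion of $(m^*)^N$ is indeed nonzero and not cancelled by contributions from other products. A tempting shortcut---a Gr\"obner degeneration to a Stanley--Reisner ring, followed by the classical ``radical initial ideal implies radical ideal'' principle---fails in the doset setting. Axiom (4) forces relations such as $p_{(\alpha,\beta)}^2=\pm p_{(\alpha,\alpha)}p_{(\beta,\beta)}+\text{lower terms}$ for non-diagonal $(\alpha,\beta)\in\mathcal{D}$, so the initial ideal of $J$ contains the square $p_{(\alpha,\beta)}^2$ and is not squarefree; the order-complex degeneration available for poset ASLs is simply not available here. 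One must instead control the leading coefficient iteratively, exploiting the $\pm 1$ in axiom (4) as the anchor of an induction over repeated applications of axioms (3) and (4). An alternative route, which I would fall back on if the iterative argument becomes unwieldy, is a direct induction on the size of the underlying poset $\mathcal{P}$ via restriction to Schubert subvarieties $X_\alpha$ for non-maximal $\alpha$, reducing reducedness of $A$ to reducedness of each $A_\alpha$ (an ASL on the strictly smaller doset $\mathcal{D}_\alpha=\{(\beta,\gamma)\in\mathcal{D}:\gamma\leq\alpha\}$).
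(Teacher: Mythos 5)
Your main argument has a real, unresolved gap, which you yourself flag as ``the main obstacle.'' After expanding $f^N$ and straightening each product $m_I=m_{i_1}\cdots m_{i_N}$ to a sum of standard monomials, you need two things: (a) the lex-maximal standard monomial in the expansion of $(m^*)^N$ appears with a nonzero coefficient controlled by $\pm a_j^N$, and (b) no other product $m_I$ with $I\neq(j,\dots,j)$ contributes to that same standard monomial. For (b), axiom (3) only guarantees that the straightened terms of any monomial are lex-strictly smaller than that monomial's own sorted sequence. But the lead standard term of the straightening of $(m^*)^N$ is \emph{also} lex-strictly below the sorted sequence of $(m^*)^N$---and indeed $(m^*)^N$ is never standard once any $m_i$ involves a non-diagonal $(\alpha,\beta)\in\mathcal{D}$, since $\alpha\leq\beta\leq\alpha\leq\beta$ fails for $\alpha<\beta$. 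So the lead term of $(m^*)^N$'s straightening and the straightenings of the other $m_I$ all live in the same strictly-below range, and nothing you have said rules out a collision or cancellation there. Iterating axiom (4) as an ``anchor'' is named but not carried out; it is precisely the nontrivial part.

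Your fallback is also incomplete as stated. Knowing that each $A_\alpha$ is reduced for non-maximal $\alpha$ does not immediately give that $A$ is reduced: a nilpotent $f\in A$ restricting to zero on every proper Schubert subvariety $X_\alpha$ need not vanish, because those subvarieties do not cover $\operatorname{Proj} A$. The paper does use a poset induction via (dual) Schubert varieties $X^\alpha$, but the missing ingredient in your sketch is the structural consequence of the inductive hypothesis: once $f$ restricts to zero on $X^\beta$ for all $\beta>\alpha$, the restriction $f_\alpha$ is forced to have the very constrained form $p_\alpha^e\sum_i c_i\,p_{(\alpha,\beta_i)}$, a power of the extremal diagonal variable times a \emph{linear} form in the variables $p_{(\alpha,\beta)}$. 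This reduces the problem to squaring a linear form, at which point one chooses a linear extension of the doset and uses axiom (4) to show that the leading term $\pm c_1^2\,p_\alpha^{2e+1}p_{\beta_1}$ of $f_\alpha^2$ cannot be produced by any other cross-term, because that would force $\beta_i=\beta_j=\beta_1$. That forced linear-form shape is exactly what makes the leading-coefficient bookkeeping tractable in the doset setting, and it is what your outline is missing. (Your observation that the Gr\"obner degeneration to a Stanley--Reisner ring is unavailable because $p_{(\alpha,\beta)}^2$ sits in the initial ideal is correct and is the right reason to look for a more hands-on argument.)
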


\begin{proof}
Let $A$ be an algebra with straightening law.
For $f\in A$ and $\alpha\in{\mathcal P}$, denote by
$f_{\alpha}$ the restriction of $f$ to the
dual Schubert variety $X^{\alpha}$.

We will show by induction on the poset ${\mathcal P}$
that $f_{\alpha}^n=0$ implies $f_{\alpha}=0$.  Note that
by induction on $n$ it suffices to do this for $n=2$.
Indeed, assume that we have shown that $f^2=0$
implies $f=0$ for any $f$ in some ring $A$, and
suppose $f^n=0$.  Then $(f^{\lceil\frac{n}{2}\rceil})^2=0$,
so that $f^{\lceil\frac{n}{2}\rceil}=0$ by our assumption,
and thus $f=0$ by induction.

Let $f\in A$ be such that $f_{\alpha}^2=0$.
In particular, $f_{\beta}^2=0$ for all $\beta\geq\alpha$
(since $X^{\beta}\subseteq X^{\alpha}$), so that
$f_{\beta}=0$ by induction.  It follows that $f_\alpha$ is
supported on monomials on $X^\alpha$ which vanish on
$X^\beta$ for all $\beta\geq\alpha$.  That is,
\begin{eqnarray}\label{sum.std.mons}
f_{\alpha} &=& \sum_{i=1}^mc_i p_{\alpha}^{e_i}
 p_{(\alpha,\beta_{1,i})}\dotsb p_{(\alpha,\beta_{\ell_i,i})}\,.
\end{eqnarray}
For the right hand side
of~(\ref{sum.std.mons})
to be standard, we must
have $\ell_i=1$ for all
$i=1,\dotsc m$.  Also, homogeneity implies that
$e:=e_1=\dotsb=e_m$ for $i=1,\dotsc m$.
Thus, if we set $\beta_i:=\beta_{1,i}$,
then $f_{\alpha}$ has the form
\begin{eqnarray}\label{sum.std.mons2}
f_{\alpha} &=& p_{\alpha}^e\sum_{i=1}^mc_i
 p_{(\alpha,\beta_i)}\,.
\end{eqnarray}
Choose a linear extension of ${\mathcal D}$
as follows.  Begin with a linear extension
of ${\mathcal P}\subseteq{\mathcal D}$.
For incomparable
elements $(\alpha,\beta),(\gamma,\delta)$ of
${\mathcal D}$, set $(\alpha,\beta)\leq(\gamma,\delta)$
if $\beta<\delta$ or $\beta=\delta$ and
$\alpha\leq\gamma$.
With respect to the resulting
linear ordering of the variables,
take the lexicographic term order on
monomials in $A$.

For an element
$g\in A$, denote by ${\rm lt}(g)$ (respectively,
${\rm lm}(g)$) the lead term (respectively, lead monomial)
of $g$.  Reordering the terms in~(\ref{sum.std.mons2}) if
necessary, we may assume that
${\rm lt}(f_{\alpha})=c_1p_{\alpha}^ep_{(\alpha,\beta_1)}$.

Writing $f_{\alpha}^2$ as a linear combination of
standard monomials (by first expanding the square of the
right hand side of~(\ref{sum.std.mons2})
and then applying the
straightening relations), we see that
${\rm lt}(f_{\alpha}^2)=
 \pm c_1^2p_{\alpha}^{2e{+}1}p_{\beta_1}$.
This follows from our choice of term order and the fourth
condition in Definition~\ref{asl.def}.

We claim that
${\rm lt}(f_{\alpha}^2)$
cannot be cancelled in the expression for $f_{\alpha}^2$
as a sum of standard monomials.  Indeed, suppose there
are $i,j\in[m]$ such that
${\rm lm}((p_{\alpha}^ep_{(\alpha,\beta_i)})\cdot
 (p_{\alpha}^ep_{(\alpha,\beta_j)}))
 =p_{\alpha}^{2e{+}1}p_{\beta_1}$.
Then by the straightening relations,
$\beta_1\leq\beta_i,\beta_j$.
But $\beta_1\not<\beta_i$
since ${\rm lm}(f_{\alpha})=p_{\alpha}^ep_{(\alpha,\beta_1)}$.
For the same reasons, $\beta_1\not<\beta_j$.
Therefore $\beta_i=\beta_j=\beta_1$, so
$c_1p_{\alpha}^ep_{(\alpha,\beta_1)}$ is
the only term contributing to the monomial
$p_{\alpha}^{2e{+}1}p_{\beta_1}$ in $f_{\alpha}^2$.
\end{proof}
\begin{rem}
Theorem~\ref{reduced.thm} was first proved for
an algebra with straightening law on a {\it poset}
in~\cite{Eis79}, but the methods used (deformation
to the initial ideal) are not well-suited for a doset.
The proof given here is essentially an
extension of the proof of Bruns and
Vetter~\cite[Theorem 5.7]{BV}
to the doset case.
\end{rem}

\subsection{Hilbert series of an algebra with straightening law}
\label{hilb.ssec}

We compute the
Hilbert series of an algebra with straightening law
$A$ on a doset, and thus obtain
formulas for the dimension
and degree of ${\rm Proj}~A$.
Let ${\mathcal P}$ be a poset and
${\mathcal D}$ a doset on ${\mathcal P}$.
Assume that ${\mathcal P}$ and ${\mathcal D}$
are {\it ranked}; that is, any two maximal chains in
${\mathcal D}$ (respectively, ${\mathcal P}$)
have the same length.  Define ${\rm rank}~{\mathcal D}$
(respectively, ${\rm rank}~{\mathcal P}$) to be the
length of any maximal chain in ${\mathcal D}$
(respectively, ${\mathcal P}$).

First, we compute the Hilbert series of $A$ with
respect to a suitably chosen fine grading of $A$
by the elements of a semigroup, as follows.

Monomials in $\C[{\mathcal D}]$ are determined by their
exponent vectors.  We can therefore identify
the set of such monomials with the semigroup
$\N^{\mathcal D}$.  Define the {\it weight map} 
${\bf w}:\N^{\mathcal D}\rightarrow\QQ^{\mathcal P}$
by setting
${\bf w}(\alpha,\beta):=
 \frac{\epsilon_{\alpha}+\epsilon_{\beta}}{2}$,
where $\epsilon_{\alpha}\in\QQ^{\mathcal P}$
($\alpha\in{\mathcal P}$) is the vector
with $\alpha$-coordinate equal to $1$
and all other coordinates equal to $0$.
This gives a grading
of $A$ by the semigroup ${\rm im}({\bf w})$.
Let ${\rm Ch}({\mathcal D})$
be the set of all chains in ${\mathcal D}$.
Since the standard monomials
(those supported on a chain)
form a $\C$-basis for $A$,
the Hilbert series with respect
to this fine grading is
\[
H_A(r)=
\sum_{c\in{\rm Ch}({\mathcal D})}
\sum_{\substack{a\in{\rm im}({\bf w})\\{\rm supp}(a)=c}}r^a
\]
where
$r:=(r_{\alpha}\mid\alpha\in{\mathcal P})$,
$a=(a_{\alpha}\mid\alpha\in{\mathcal P})$, and
$r^a=\prod_{\alpha\in{\mathcal P}}r_{\alpha}^{a_{\alpha}}$.
Note that elements of
${\rm im}({\bf w})$ correspond to
certain monomials with {\it rational}
exponents (supported on ${\mathcal P}$).
For example, $(\alpha,\beta)\in{\mathcal D}$
corresponds to
$\sqrt{r_{\alpha}r_{\beta}}$. 
Setting all
$r_{\alpha}=r$,
we obtain the usual (coarse) Hilbert series,
defined with respect to the
usual $\Z$-grading on $A$ by degree.

\begin{ex}\label{barbell.ex}
Consider the doset ${\mathcal D}:=\{\alpha,(\alpha,\beta),\beta\}$
on the two element poset $\{\alpha<\beta\}$.  The elements of
${\rm Ch}({\mathcal D})$ are shown in Figure~\ref{chains.fig}.
\begin{eqnarray*}
{\rm Ch}({\mathcal D})&=&
\{\emptyset,\{\alpha\},\{\beta\},
\{(\alpha,\beta)\},\{\alpha,(\alpha,\beta)\},
\{(\alpha,\beta),\beta\},\{\alpha,\beta\},
\{\alpha,(\alpha,\beta),\beta\}\}\,.
\end{eqnarray*}
\begin{figure}[htb]
\[
\begin{picture}(200,115)
\put(0,93){\includegraphics[scale=0.3]{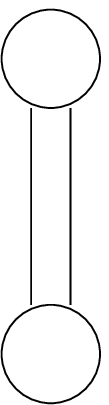}}
\put(60,93){\includegraphics[scale=0.3]{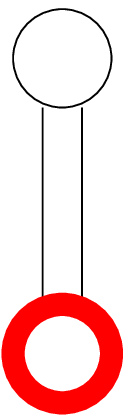}}
\put(120,93){\includegraphics[scale=0.3]{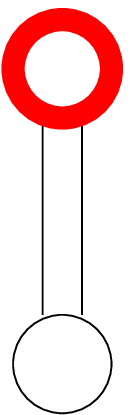}}
\put(180,93){\includegraphics[scale=0.3]{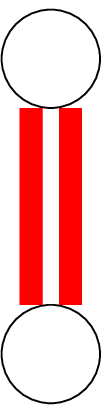}}
\put(0,13){\includegraphics[scale=0.3]{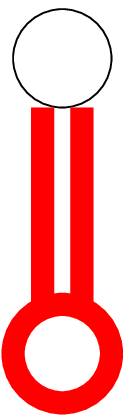}}
\put(60,13){\includegraphics[scale=0.3]{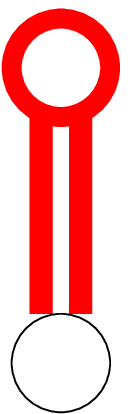}}
\put(120,13){\includegraphics[scale=0.3]{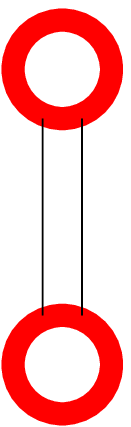}}
\put(180,13){\includegraphics[scale=0.3]{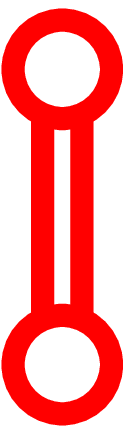}}
\put(2,80){$\emptyset$}
\put(57,80){$\{\alpha\}$}
\put(117,80){$\{\beta\}$}
\put(166,80){$\{(\alpha,\beta)\}$}
\put(-20,0){$\{\alpha,(\alpha,\beta)\}$}
\put(40,0){$\{(\alpha,\beta),\beta\}$}
\put(112,0){$\{\alpha,\beta\}$}
\put(155,0){$\{\alpha,(\alpha,\beta),\beta\}$}
\end{picture}
\]
\caption{The set ${\rm Ch}({\mathcal D})$
of chains in ${\mathcal D}$.\label{chains.fig}}
\end{figure}

We have
\begin{eqnarray*}
H_A(r)
&=&
1+\frac{r_{\alpha}}{1-r_{\alpha}}+
 \frac{r_{\beta}}{1-r_{\beta}}+\sqrt{r_{\alpha}r_{\beta}}
 +\frac{\sqrt{r_{\alpha}^3r_{\beta}}}{1-r_{\alpha}}\\
&+&
 \frac{\sqrt{r_{\alpha}r_{\beta}^3}}{(1-r_{\beta})}
 +\frac{r_{\alpha}r_{\beta}}{(1-r_{\alpha})(1-r_{\beta})}
 +\frac{\sqrt{r_{\alpha}^3r_{\beta}^3}}
 {(1-r_{\alpha})(1-r_{\beta})}\,.
\end{eqnarray*}
Setting $r=r_{\alpha}=r_{\beta}$, we obtain
the Hilbert series with respect to the usual
$\Z$-grading of $\C[{\mathcal D}]$.
\begin{eqnarray*}
h_A(r)
&=&
1+\frac{2r}{1-r}+r
 +\frac{2r^2}{1-r}
 +\frac{r^3+r^2}{(1-r)^2}\\
&=&
\frac{r+1}{(1-r)^2}\\
&=&
1+\sum_{i=1}^{\infty}(2i{+}1)r^i\,.
\end{eqnarray*}
We see that the Hilbert polynomial
is $p(i)=2i{+}1$, so
${\rm dim}({\rm Proj}~A)=1$, and
${\rm deg}({\rm Proj}~A)=2$.
\end{ex}

\begin{rem}
The Lagrangian Grassmannian $\LG(2)$ is an algebra
with straightening law on the five-element doset
obtained by adding two elements $\hat{0}<\alpha$
and $\hat{1}>\beta$ to the doset of
Example~\ref{barbell.ex}.  A similar
computation shows that
the degree of $\LG(2)$ is also $2$.
Theorem~\ref{asl.thm} allows us to
carry out such degree computations for the
Drinfel'd Lagrangian Grassmannian,
giving a new derivation of the intersection
numbers computed in quantum cohomology.
\end{rem}

Fix a poset ${\mathcal P}$, and a doset ${\mathcal D}$
on ${\mathcal P}$.  For the remainder of
this section, set $P:={\rm rank}~{\mathcal P}$
and $D:={\rm rank}~{\mathcal D}$.
Given a chain
$\{\alpha_1,\dotsc,\alpha_u,(\beta_{11},\beta_{12}),
 \dotsc,(\beta_{v1},\beta_{v2})\}\subseteq{\mathcal D}$
(not necessarily written in order),
let $r_i$ be the formal variable
corresponding to $\alpha_i$ ($i=1,\dotsb,u$),
and let $s_{jk}$ correspond to
$\beta_{jk}$ ($j=1,\dotsc,v$, $k=1,2$).
The variables $r$ and $s$
are not necessarily disjoint;
in the example above, the chain
$\{\alpha,(\alpha,\beta)\}$ has
$r_1=s_{11}$.  We have
\begin{eqnarray*}
\sum_{\stackrel{a\in{\rm im}({\bf w})}{{\rm supp}(a)=c}}r^a
&=&
\prod_{i=1}^u\frac{r_i}{1-r_i}\cdot\prod_{j=1}^v\sqrt{s_{j1}s_{j2}}
\end{eqnarray*}
Recall that we may identify ${\mathcal P}$
with the diagonal
$\Delta_{{\mathcal P}}\subseteq{\mathcal D}
 \subseteq{\mathcal P}\times{\mathcal P}$.
Letting $c_u^v$ denote the number
of chains consisting of
$u$ elements of ${\mathcal P}$
and
$v$ elements of ${\mathcal D}\setminus{\mathcal P}$,
we have
\begin{eqnarray*}
{\rm HS}_A(r)
&=&
\sum_{u=0}^{P{+}1}
 \sum_{v=0}^{D{-}P}
 c_u^v\frac{r^{u{+}v}}{(1-r)^u}\\
&=&
\sum_{u=0}^{P{+}1}
 \sum_{v=0}^{D{-}P}c_u^v
 r^{u{+}v}(\sum_{k=0}^{\infty}r^k)^u\\
&=&
\sum_{v=0}^{D{-}P}
 c_0^vr^v+\sum_{\ell=0}^{\infty}\sum_{u=1}^{P{+}1}
 \sum_{v=0}^{D{-}P}
 c_u^v\binom{u{+}\ell{-}1}{u{-}1}r^{u{+}v{+}\ell}\,.
\end{eqnarray*}
When $w>D{-}P$,
the coefficient of $r^w$ agrees
with the Hilbert polynomial:
\begin{eqnarray}\label{hilbpoly.eqn}
{\rm HP}_A(w) & = & \sum_{u=1}^{P{+}1}
              \sum_{v=0}^{D{-}P}
              c_u^v\binom{w{-}v{-}1}{u{-}1}\,.
\end{eqnarray}
In particular, the dimension of ${\rm Proj}~A$
is $P$, since
this is the largest value of
$u-1={\rm deg}_w\binom{w{-}v{-}1}{u{-}1}$.
The leading monomial of $HP_A(w)$ is
\[
\sum_{v=0}^{D{-}P}
 c_{P{+}1}^v
 \binom{w{-}v{-}1}{P{-}1}\,.
\]
By our assumption that the maximal chains in ${\mathcal P}$
(respectively, ${\mathcal D}$) have the same length,
we have $c_{P{+}1}^v=
 \binom{D{-}P}{v}
 c_{P{+}1}^0$,
so that the leading coefficient of $HP_A(w)$ is 
\begin{eqnarray*}
\frac{c_{P{+}1}^0}{(P{-}1)!}
 \sum_{v=0}^{D{-}P}
 \binom{D{-}P}{v}
&=&
\frac{2^{D{-}P}
 c_{P{+}1}^0}{(P{-}1)!},
\end{eqnarray*}
from which we deduce the degree
and dimension of ${\rm Proj}~A$.

\begin{thm}\label{degree.thm}
The degree of ${\rm Proj}~A$ is $2^{D{-}P}c_{P{+}1}^0$.
The dimension of ${\rm Proj}~A$ is $P$.
\end{thm}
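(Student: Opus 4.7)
The plan is to extract both the dimension and the degree of $\mathrm{Proj}\,A$ directly from the Hilbert polynomial $\mathrm{HP}_A(w)$ already exhibited in equation~(\ref{hilbpoly.eqn}). By standard theory, if $\mathrm{Proj}\,A$ is $d$-dimensional then $\mathrm{HP}_A(w)$ is a polynomial in $w$ of degree $d$, and $\deg(\mathrm{Proj}\,A)$ equals $d!$ times its leading coefficient. So the theorem amounts to identifying the $w$-degree and leading coefficient of the explicit expression on the right-hand side of~(\ref{hilbpoly.eqn}).

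For the dimension, I would observe that $\binom{w-v-1}{u-1}$ is a polynomial in $w$ of degree $u-1$. Since the outer sum in~(\ref{hilbpoly.eqn}) ranges only up to $u=P+1$, the Hilbert polynomial has degree at most $P$. Equality follows because $c_{P+1}^0 \geq 1$: by the ranked hypothesis $\mathcal P$ admits at least one maximal chain, consisting of $P+1$ elements of $\Delta_{\mathcal P}\subseteq{\mathcal D}$. Hence $\dim(\mathrm{Proj}\,A)=P$.

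For the degree, only the $u=P+1$ summands contribute to $w^P$, and the coefficient of $w^P$ in $\binom{w-v-1}{P}$ is $\tfrac{1}{P!}$. So the leading coefficient of $\mathrm{HP}_A(w)$ is $\tfrac{1}{P!}\sum_{v=0}^{D-P}c_{P+1}^v$. The substantive step is the combinatorial identity
\[
c_{P+1}^v \;=\; \binom{D-P}{v}\,c_{P+1}^0,
\]
which I would justify as follows. A chain counted by $c_{P+1}^v$ restricts on the diagonal to a maximal chain $\alpha_0<\alpha_1<\cdots<\alpha_P$ in $\mathcal P$, and each non-diagonal element must be a doubled cover $(\alpha_i,\alpha_{i+1})\in{\mathcal D}\setminus\Delta_{\mathcal P}$ inserted between consecutive $\alpha_i,\alpha_{i+1}$. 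Conversely, since $\mathrm{rank}\,{\mathcal D}=D$ and $\mathrm{rank}\,{\mathcal P}=P$, every maximal chain in $\mathcal P$ supports exactly $D-P$ doubled covers: if some maximal chain supported fewer, the associated maximal chain in $\mathcal D$ would have length strictly less than $D$, contradicting the ranked assumption on $\mathcal D$. Thus each of the $c_{P+1}^0$ maximal chains in $\mathcal P$ produces exactly $\binom{D-P}{v}$ chains of the stated shape, one for each choice of which $v$ of the $D-P$ available doubled covers to insert. Applying $\sum_{v=0}^{D-P}\binom{D-P}{v}=2^{D-P}$ then gives $\deg(\mathrm{Proj}\,A)=P!\cdot\tfrac{2^{D-P}c_{P+1}^0}{P!}=2^{D-P}c_{P+1}^0$.

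The main obstacle, such as it is, lies in the ranked-chain bijection behind the identity $c_{P+1}^v=\binom{D-P}{v}c_{P+1}^0$; the ranked hypothesis on both $\mathcal D$ and $\mathcal P$ is essential here, since without it different maximal chains in $\mathcal P$ could admit different numbers of doubled covers and the count $c_{P+1}^v$ would not factor so cleanly. Everything else is routine manipulation of the binomial expressions derived in Section~\ref{hilb.ssec}.
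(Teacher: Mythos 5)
Your proof is correct and follows the same route as the paper: read off the $w$-degree and leading coefficient of ${\rm HP}_A(w)$ from~(\ref{hilbpoly.eqn}), then apply the identity $c_{P+1}^v=\binom{D-P}{v}c_{P+1}^0$ together with $\sum_v\binom{D-P}{v}=2^{D-P}$. The one place you add genuine content is the combinatorial identity itself: the paper simply asserts that it follows from the ranked hypothesis, whereas you supply the bijection argument. Your sketch is essentially right and worth spelling out: since the diagonal part of any chain counted by $c_{P+1}^v$ is a maximal chain $\alpha_0\lessdot\cdots\lessdot\alpha_P$ in ${\mathcal P}$ (so consecutive entries are covers), any non-diagonal element squeezed between $\alpha_i$ and $\alpha_{i+1}$ must equal $(\alpha_i,\alpha_{i+1})$, and at most one fits; the ranked hypothesis on ${\mathcal D}$ then forces the number of covers $\alpha_i\lessdot\alpha_{i+1}$ with $(\alpha_i,\alpha_{i+1})\in{\mathcal D}$ to be exactly $D-P$ for \emph{every} maximal chain in ${\mathcal P}$, which is what makes the count $c_{P+1}^v$ factor as $\binom{D-P}{v}c_{P+1}^0$. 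You also explicitly note $c_{P+1}^0\geq 1$, which is needed to conclude $\dim({\rm Proj}\,A)=P$ rather than merely $\leq P$ and which the paper leaves implicit. These are small but real improvements in rigor over the paper's derivation.
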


\begin{ex}\label{diamond.ex}
Let $A$ be an algebra with straightening law on the doset
${\mathcal D}$ shown in Figure~\ref{diamond.fig}.

\begin{figure}[htb]
\[
\begin{picture}(50,70)
\put(0,-10){\includegraphics[scale=0.64]{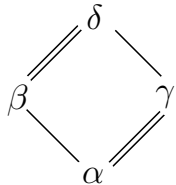}}
\end{picture}
\]
\caption{A doset on a four-element poset.\label{diamond.fig}}
\end{figure}

We have
${\rm rank}~{\mathcal P}=2$, ${\rm rank}~{\mathcal D}=3$,
and
\begin{eqnarray*}
{\rm Ch}({\mathcal D})&=&
\Bigl\{\emptyset,\{\alpha\},\{\beta\},\{\gamma\},\{\delta\},
       \{(\alpha,\gamma)\},\{(\beta,\delta)\},\\
&&\{\alpha,\gamma\},\{\alpha,\beta\},\{\alpha,\delta\},
  \{\beta,\delta\},\{\gamma,\delta\},\{\alpha,(\alpha,\gamma)\},\\
&&\{\alpha,(\beta,\delta)\},\{(\alpha,\gamma),\gamma\},
  \{(\alpha,\gamma),\delta\},\{\beta,(\beta,\delta)\},\\
&&\{(\beta,\delta),\delta\},\{\alpha,\beta,\delta\},
  \{\alpha,\gamma,\delta\},
  \{\alpha,(\alpha,\gamma),\gamma\},\\
&&\{\alpha,(\alpha,\gamma),\delta\},\{\alpha,\beta,(\beta,\delta)\},
  \{\alpha,(\beta,\delta),\delta\},\\
&&\{\beta,(\beta,\delta),\delta\},\{\alpha,(\alpha,\gamma),
  \gamma,\delta\},
  \{\alpha,\beta,(\beta,\delta),\delta\}\Bigr\}
\end{eqnarray*}
and the values of $c_u^v$ are given by the following matrix,
whose entry in row $i$ and column $j$ is $c_{j{-}1}^{i{-}1}$:
\[
\left(
\begin{array}{cccc}
 1  &  4  &  5  &  2  \\
 2  &  6  &  5  &  2 
\end{array}
\right)
\]
In view of~(\ref{hilbpoly.eqn}), the Hilbert
polynomial is therefore:
\begin{eqnarray*}
{\rm HP}_A(w)
& = &
4\binom{w-1}{0}+5\binom{w-1}{1}+2\binom{w-1}{2}\\
& + &
6\binom{w-2}{0}+5\binom{w-2}{1}+2\binom{w-2}{2}\\[5pt]
& = & 2w^2+2w+3 = 4\frac{w^2}{2!}+2w+3.
\end{eqnarray*}
In particular, ${\rm dim}({\rm Proj}~A)=2$
and ${\rm deg}({\rm Proj}~A)=4$.
\end{ex}

Theorems~\ref{asl.thm} and~\ref{degree.thm} will
allow us to compute intersection numbers in quantum
cohomology in the same
manner as Example~\ref{diamond.ex}.
The essential step is to show that the Drinfel'd
Lagrangian Grassmannian is a
algebra with straightening law
on the doset of admissible pairs ${\mathcal D_{d,n}}$.

\subsection{The doset of admissible pairs}
\label{admpairs.ssec}
We define the doset of admissible pairs on
the poset ${\mathcal P}_{d,n}$.
Let us first consider an example.
\begin{ex}
Consider the poset
\[
{\mathcal P}_{2,4}:=
\bigl\{\alpha^{(a)}\in\tangbin{4}{4}_2\mid
i\in\alpha\iff\bari\not\in\alpha\bigr\}
\]
of admissible elements of $\angbin{4}{4}_2$.
Let
${\mathcal D}_{2,4}$
be the set of elements
$(\alpha,\beta)^{(a)}\in{\mathcal O}_{{\mathcal P}_{2,4}}$
such that
$\alpha$ and $\beta$ have the same number of negative elements.
It is a doset on ${\mathcal P}_{2,4}$.
The Hasse diagram (drawn so that going up in the doset
corresponds to moving to the right) for ${\mathcal D}_{2,4}$
is shown in Figure~\ref{D24.fig}.
\begin{figure}[htb]
\[
\begin{picture}(500,120)(0,55)
\put(0,50){\includegraphics[scale=0.7]{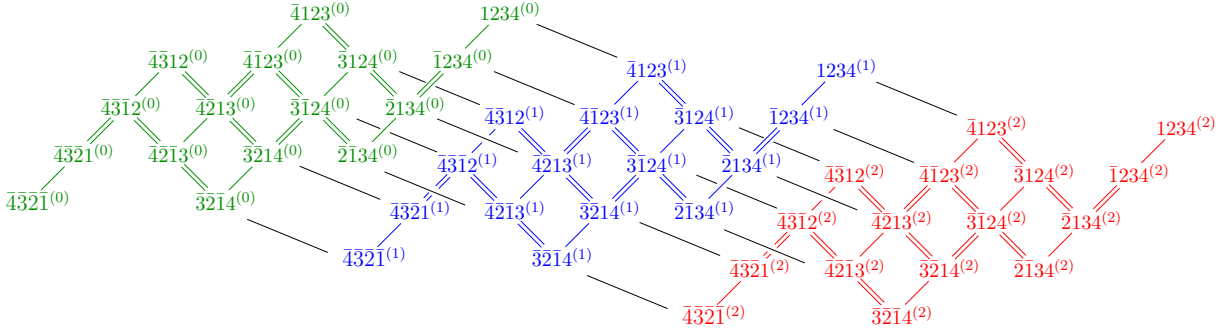}}
\end{picture}
\]
\caption{The doset ${\mathcal D}_{2,4}$.  Elements increase as
one moves to the right.\label{D24.fig}}
\end{figure}

To each
$(\alpha,\beta)^{(a)}\in{\mathcal P}_{2,4}$,
we have the Pl\"{u}cker
coordinate
$p_{(\alpha,\beta)}^{(a)}:=u^av^{d{-}a}\otimes p_{(\alpha,\beta)}
\in S^d\C^2\otimes {\rm L}(\omega_n)^*$, where
 $\{u,v\}\subseteq\C^2$ is a basis dual to $\{s,t\}\subseteq(\C^2)^*$.
\end{ex}
Let $\angbin{n}{n}_d\cong\binom{[2n]}{n}_d$
be the poset associated to the (ordinary)
Drinfel'd Grassmannian $\Q_d(n,2n)$,
and recall that ${\mathcal P}_{d,n}\subseteq\angbin{n}{n}_d$
is the subposet consisting of the elements
$\alpha^{(a)}$ such that $\alpha^t=\alpha$.  There are three types
of covers in ${\mathcal P}_{d,n}$.

\begin{enumerate}
\item $\alpha^{(a)}\lessdot\beta^{(a)}$, where $\alpha$ and $\beta$
      have the same number of negative elements.
      For example,
      $\bar4\bar213^{(a)}\lessdot\bar4\bar123^{(a)}\in {\mathcal P}_{d,4}$
      for any non-negative integers $a\leq d$.
\item $\alpha^{(a)}\lessdot\beta^{(a)}$, where the number of
      negative elements
      in $\beta$ is one less than the number of negative
      elements of $\alpha$.  For example,
      $\bar4\bar123^{(a)}\lessdot\bar4123^{(a)}\in{\mathcal P}_{d,4}$
      for any non-negative integers $a\leq d$.
\item $\alpha^{(a)}\lessdot\beta^{(a{+}1)}$, where
      the number of negative elements of $\beta$ 
      is one more than the number of negative elements
      of $\alpha$, $\bar{n}\in\beta$, and $n\in\alpha$.
      For example,
      $\bar3\bar214^{(a)}\lessdot\bar4\bar3\bar21^{(a{+}1)}$
      for any non-negative integers $a\leq d$.
\end{enumerate}

The first two types are those appearing in
the classical Bruhat order on ${\mathcal P}_{0,n}$.
It follows that ${\mathcal P}_{d,n}$ is a union of
levels ${\mathcal P}^{(a)}_{d,n}$,
each isomorphic to the Bruhat order,
with order relations between levels imposed by covers of the
type (3) above.  We define the doset
${\mathcal D}_{d,n}$ of admissible
pairs in ${\mathcal P}_{d,n}$.

\begin{defn}\label{adm.def}
A pair $(\alpha^{(a)}<\beta^{(a)})$ is {\it admissible} if
there exists a saturated chain
$\alpha=\alpha_0\lessdot\alpha_1\lessdot\dotsb\lessdot\alpha_s=\beta$,
where each $\alpha_i\lessdot\alpha_{i{+}1}$ is a cover
of type (1).
\end{defn}

We denote the set of admissible
pairs by ${\mathcal D}_{d,n}$.
Observe that the pair $(\alpha^{(a)}<\beta^{(b)})$
is never admissible if $a<b$.

\begin{prop}\label{distr.prop}
The set ${\mathcal D}_{d,n}\subseteq
 {\mathcal P}_{d,n}\times{\mathcal P}_{d,n}$
is a doset on ${\mathcal P}_{d,n}$.
The poset ${\mathcal P}_{d,n}$ is a distributive lattice.
\end{prop}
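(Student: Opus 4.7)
The plan is to verify the doset axioms for ${\mathcal D}_{d,n}$ and the distributivity of ${\mathcal P}_{d,n}$ separately, reducing both to combinatorial statements about symmetric partitions. For the doset structure, the inclusions $\Delta_{{\mathcal P}_{d,n}}\subseteq{\mathcal D}_{d,n}\subseteq{\mathcal O}_{{\mathcal P}_{d,n}}$ are immediate from Definition~\ref{adm.def}: the diagonal corresponds to trivial length-zero chains, and admissibility entails comparability. The nontrivial closure axiom rests on the observation that type-(1) covers preserve the level $a$, so admissibility of $(\alpha^{(a)},\beta^{(b)})$ forces $a=b$. Consequently, if $\alpha^{(a)}\le\beta^{(b)}\le\gamma^{(c)}$ and either $(\alpha^{(a)},\gamma^{(c)})$ or both subpairs are admissible, the levels collapse to $a=b=c$ and the question reduces to the analogous statement inside ${\mathcal P}_{d,n}^{(a)}\cong{\mathcal P}_n$.

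The combinatorial heart of the argument is the characterization: for $\alpha\le\gamma$ in ${\mathcal P}_n$, the pair $(\alpha,\gamma)$ is admissible if and only if $|\alpha_-|=|\gamma_-|$. The forward direction is immediate, since type-(1) covers preserve the number of negative elements. The converse uses that ${\mathcal P}_n$ is ranked and that the only other covering relation, type (2), strictly decreases $|\cdot_-|$ as one moves upward; hence any saturated chain from $\alpha$ to $\gamma$ with $|\alpha_-|=|\gamma_-|$ must consist entirely of type-(1) covers. Granted this characterization, the doset closure axiom is a one-line squeeze: since $|\cdot_-|$ is monotone nonincreasing along $\le$ (the number of type-(2) covers in any saturated chain from $\alpha$ to $\beta$ is $|\alpha_-|-|\beta_-|\ge 0$), the equality $|\alpha_-|=|\gamma_-|$ forces $|\alpha_-|=|\beta_-|=|\gamma_-|$ for any intermediate $\beta$, making both subpairs admissible; the reverse implication is just concatenation of type-(1) chains.

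For distributivity, I use that the ambient lattice $\angbin{n}{n}_d$ is distributive by~\cite{SoSt01} (after identifying $\angles{n}\cong[2n]$) and realize ${\mathcal P}_{d,n}$ as the fixed set of the involution $\iota:\alpha^{(a)}\mapsto(\alpha^t)^{(a)}$. I would verify that $\iota$ is an order automorphism of $\angbin{n}{n}_d$: within each level this is the classical statement that partition transpose preserves inclusion, and across levels it amounts to a direct check that the shift condition $\alpha_i\le\beta_{b-a+i}$ defining the order is symmetric under transposition of sequences. Since an order automorphism of a lattice commutes with meet and join, for $\iota$-fixed $\alpha^{(a)},\beta^{(b)}\in{\mathcal P}_{d,n}$ the elements $\alpha^{(a)}\wedge\beta^{(b)}$ and $\alpha^{(a)}\vee\beta^{(b)}$ remain $\iota$-fixed, so ${\mathcal P}_{d,n}$ is a sublattice of $\angbin{n}{n}_d$ and inherits distributivity. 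The main obstacle I anticipate is precisely this cross-level $\iota$-equivariance of the order, since the shift condition is not manifestly symmetric under transpose and requires unwinding into an identity on sequences; once that is in hand, both the doset property and the distributive lattice claim drop out formally.
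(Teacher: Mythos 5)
Your proof is correct but takes a genuinely different route on both halves. For the doset axiom, the paper simply observes that admissible pairs never cross levels (so the claim reduces to the $d=0$ case) and outsources the $d=0$ case to DeConcini--Lakshmibai~\cite{DeLa79}. You instead give a self-contained verification built on the characterization that $(\alpha,\gamma)$ is admissible if and only if $|\alpha_-|=|\gamma_-|$, which follows because $|\cdot_-|$ is preserved by type-(1) covers, drops by exactly one under type-(2) covers, and these exhaust the covers within a level; the closure axiom then reduces to a squeeze. This fills in exactly what the paper's citation hides and is arguably more transparent. For distributivity, the paper constructs an explicit isomorphism of ${\mathcal P}_{d,n}$ with a lattice of shifted symmetric subsets of $\Z^2$; you instead realize ${\mathcal P}_{d,n}$ as the fixed-point set of the transpose involution $\iota$ on the distributive lattice $\angbin{n}{n}_d\cong\binom{[2n]}{n}_d$ from~\cite{SoSt01}, and use that the fixed set of a lattice automorphism is a sublattice, hence distributive. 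This is cleaner in that it leverages the already-established distributivity of the ambient lattice, but it does shift the burden onto showing $\iota$ is order-preserving across levels, which you correctly flag as the remaining work but leave undone.

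That flagged step is true and not hard to close. Identifying $\angles{n}$ with $[2n]$ order-isomorphically, the cross-level order condition $\alpha^{(a)}\leq\beta^{(b)}$ (namely $a\leq b$ and $\alpha_i\leq\beta_{b-a+i}$) is equivalent to $a\leq b$ together with $|\alpha\cap[m]|\geq|\beta\cap[m]|-(b-a)$ for every $m$. The transpose map sends $\alpha$ to $[2n]\setminus(2n{+}1{-}\alpha)$, and a direct count gives $|\iota(\alpha)\cap[m]|=m-n+|\alpha\cap[2n-m]|$; substituting shows the family of inequalities is carried to itself (with $m\mapsto 2n-m$), so $\iota$ is an order automorphism and your argument goes through. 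One small inconsistency in your summary: your final sentence says the doset property also ``drops out'' once the $\iota$-equivariance is established, but your doset argument in the first two paragraphs does not use it at all.
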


\begin{proof}
In view of our description of the covers in
${\mathcal D}_{d,n}$, it is clear that, for
all $d\geq 0$, is a doset if and only if
${\mathcal D}_n={\mathcal D}_{0,n}$ is a
doset.  The latter is proved in~\cite{DeLa79}.
To prove that ${{\mathcal P}}_{d,n}$ is a distributive lattice,
we give an isomorphism with a certain lattice
of subsets of the union of
$d{+}1$ shifted $n\times n$ squares in $\Z^2$
which generalize the usual notion of a partition.  Let
\begin{eqnarray*}
S_{d,n} &:=&
 \bigcup_{a=0}^d\{(i{+}a,j{+}a)\mid 0\leq i,j\leq n\}\,.
\end{eqnarray*}

To $\alpha^{(a)}\in{\mathcal P}_{d,n}$,
we associate the subset
of $S_{d,n}$ obtained by shifting
the (open) squares in $\alpha$ by $(a,a)$, and
adding the boxes obtained by translating a box
of $\alpha$ by a vector $(v_1,v_2)$ with
$v_1,v_2\leq 0$ and the points
$(i,i)$ for $i=0,\dotsc,a$.
See Figure~\ref{shift.fig} for an example.
\begin{figure}[htb]
\[
\begin{picture}(90,70)
\put(0,0){\includegraphics[scale=0.40]{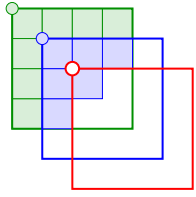}}
\end{picture}
\]
\caption{The subset of $S_{2,4}$ associated to
$\bar4\bar213^{(1)}\in{\mathcal P}_{2,4}$.\label{shift.fig}}
\end{figure}
It is straightforward to check that the (symmetric)
subsets obtained in this way form a distributive lattice
(ordered by inclusion) isomorphic to ${\mathcal P}_{d,n}$.
\end{proof}

\section{Schubert varieties and Gromov-Witten invariants}\label{enumgeom.sec}

Gromov-Witten invariants are solutions to
enumerative questions involving algebraic maps
from $\P^1$ to a projective variety $X$.  When
$X$ is the Lagrangian Grassmannian (or the
ordinary Grassmannian~\cite{SoSt01}), these
questions can be studied geometrically via
the Drinfel'd compactification, as advocated
by Braverman~\cite{Bra06}.  We do this in
Section~\ref{intprobs.ssec}, and relate our findings to
the quantum cohomology of the Lagrangian Grassmannian
in Section~\ref{qcoh.ssec}.  See~\cite{Bra06,So00c,Sot01}
for further reading on applications of Drinfel'd
compactifications to quantum cohomology.  The study
of Gromov-Witten invariants (in various special
cases) has also been approached
via the quot scheme~\cite{Ber97,Che03,Cio99,FuPa95} and
the space of stable maps~\cite{BCK05,Giv96,Opr06}.

\subsection{Intersection problems on the Drinfel'd compactification}
\label{intprobs.ssec}

Given an isotropic flag $\Fdot$ and a
symmetric partition $\alpha\in{\mathcal P}_n$,
we have the Schubert variety
\[
X^{\alpha}(\Fdot):=
\{E\in\LG(n)\mid \dim(E\cap F_{n{-}\alpha_i{+}i})\geq i\}.
\]
The enumerative problems we consider involve
conditions that the image of a map $M\in\LM_d(X)$
pass through Schubert varieties
at prescribed points of $\P^1$.
\begin{ques}\label{intprob.ques}
Let $\Fdot^1,\dotsc,\Fdot^N$ be general Lagrangian flags,
$\alpha_1,\dotsc,\alpha_N\in{\mathcal P}_n$, and
let $s_1,\dotsc,s_N\in\P^1$ be distinct points.
Assume $\sum_{i=1}^N|\alpha_i|=\dim{\LG(n)}{+}d(n{+}1)$.
How many degree-$d$ algebraic maps
$M:\P^1\to\LG(n)$ satisfy
\[
 M(s_i)\in X^{\alpha_i}(\Fdot^i)
\]
for all $i=1,\dotsc,N$?
\end{ques}
Our answer to Question~\ref{intprob.ques} is given
in Theorem~\ref{intprob.thm}.
In order to prove this result, we
must first establish some results on the
geometry of certain subvarieties of $\LQ_d(n)$
defined in terms of the {\it universal evaluation map}
${\rm ev}:\P^1\times\LM_d(n)\to\LG(n)$, defined by
${\rm ev}(s,M):=M(s)$ for $s\in\P^1$ and $M\in\LM_d(n)$.

Fix a point $s\in\P^1$ and define
${\rm ev}_s:={\rm ev}(s,\bullet):\LM_d(n)\to\LG(n)$.
Given a Schubert variety $X^{\alpha}(\Fdot)\subseteq\LG(n)$,
the set of maps $M\in\LM_d(n)$ such that $M(s)$ lies
in $X^{\alpha}(\Fdot)$ is the preimage
${\rm ev}_s^{-1}(X^{\alpha}(\Fdot))$.
This is a general translate of the locally closed subset
$X^{\alpha^{(0)}}\cap\LM_d(n)$ under the
action of the group $\SL_2\C\times\Sp_{2n}\C$.  By
a Schubert variety, we will mean the closure
of ${\rm ev}_s^{-1}(X^{\alpha}(\Fdot))$
in $\LQ_d(n)$, and denote it
by $X^{\alpha^{(0)}}(s;\Fdot)$.
In order to understand these subvarieties, we
extend the evaulation map to a globally-defined map
$\P^1\times\LQ_d(n)\to\LG(n)$. To do this, we
must first study the boundary
$\LQ_d(n)\setminus\LM_d(n)$.

The embedding $\LM_d(n)\hookrightarrow
\P((S^d\C^2)^*\otimes {\rm L}(\omega_n))$
is defined by regarding a map
$M\in\LM_d(n)$ as a $\binom{2n}{n}$-tuple
of degree-$d$ homogeneous forms. We identify the space
$\LM_d(n)$ of maps with its image, which is a locally
closed subset of $\P((S^d\C^2)^*\otimes {\rm L}(\omega_n))$.
The Drinfel'd compactification $\LQ_d(n)$ is by
definition the closure of the image.

On the other hand, $\LM_d(n)\subseteq\LQ_d(n)$ is the
set of points corresponding to a $\binom{2n}{n}$-tuple
of homogeneous forms satifying the Zariski open
condition that they have no common factor.
Therefore, the boundary $\LQ_d(n)\setminus\LM_d(n)$
consists of $\binom{2n}{n}$-tuples of homogeneous
forms which do have a common factor.  Such a list
of forms gives a regular
map of degree $a<d$ together with an effective Weil
divisor of degree $d{-}a$ on
$\P^1$ defined by the base points of the map.
We thus have a stratification
\[
 \LQ_d(n)=\bigsqcup_{a=0}^{d}\strtm{a}{d}{n}
\]
where $\P({S^a\C^2}^*)$ is the space of degree-$a$
forms in two variables, or alternatively,
the space of effective Weil divisors on $\P^1$
of degree $a$.    In particular,
the boundary of $\LQ_d(n)$ is simply
$\bigsqcup_{a=1}^{d}\strtm{a}{d}{n}$.
We may regard any
point of $\LQ_d(n)$ as
a pair $(D,M)$, where $M\in\LM_{d{-}a}(n)$ and
$D$ is a divisor on $\P^1$.

Fixing a point $s\in\P^1$, the
evaluation map ${\rm ev}_s:={\rm ev}(s,\bullet)$ is
undefined at each point
$(D,M)\in\strtm{a}{d}{n}$
such that $s\in D$.  Thus, restricting to the
stratum $\strtm{a}{d}{n}$, the map
${\rm ev}_s$ is defined on $U_s^a\times\LM_{d{-}a}(n)$,
where $U_s^a\subseteq\P(S_a\C^2)$ is the set of
forms which do not vanish at $s\in\P^1$.

For each $a=0,\dotsc,d$, define a map
$\epsilon_s^a:\strtm{a}{d}{n}\to\LG(n)$
by the formula $\epsilon_s^a(D,M):=M(s)$,
and let
$\epsilon_s:\LQ_d(n)=\sqcup_a\strtm{a}{d}{n}\to\LG(n)$
be the (globally-defined) map which restricts to
$\epsilon_s^a$ on $\strtm{a}{d}{n}$.  The
evaluation map ${\rm ev}_s$ agrees with $\epsilon_s$
wherever it is defined.  Hence $\epsilon_s$ extends
${\rm ev}_s$ to a globally defined map, which is a
morphism on each stratum $\strtm{a}{d}{n}$.  The
Schubert variety $X^{\alpha^{(0)}}(s;\Fdot)$ is the
preimage of $X^{\alpha}(\Fdot)$ under this globally
defined map; hence we have the following fact.
\begin{lemma}\label{stratum.lem}
Given a point $s\in\P^1$ and a isotropic flag $\Fdot$,
the Schubert variety $X^{\alpha^{(0)}}(s,\Fdot)$ is the disjoint
union of the strata $\P({S^a\C^2}^*)\times
 (X^{\alpha^{(a)}}(s;\Fdot)\cap\LM_{d{-}a}(n))$.
\end{lemma}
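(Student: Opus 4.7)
The plan is to derive the decomposition as an immediate consequence of the identification $X^{\alpha^{(0)}}(s;\Fdot) = \epsilon_s^{-1}(X^{\alpha}(\Fdot))$ stated in the paragraph preceding the lemma, combined with the stratification $\LQ_d(n) = \bigsqcup_{a=0}^{d} \strtm{a}{d}{n}$.

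Given this identification, I intersect the Schubert variety with each stratum. Since $\epsilon_s^a(D,M) = M(s)$ depends only on $M$, the preimage on $\strtm{a}{d}{n}$ factors as a product:
\[
X^{\alpha^{(0)}}(s;\Fdot) \cap \strtm{a}{d}{n}
\,=\, (\epsilon_s^a)^{-1}(X^{\alpha}(\Fdot))
\,=\, \P({S^a\C^2}^*) \times \{M \in \LM_{d-a}(n) : M(s) \in X^{\alpha}(\Fdot)\},
\]
and the second factor is precisely $X^{\alpha^{(a)}}(s;\Fdot) \cap \LM_{d-a}(n)$. Taking the disjoint union over $a = 0, \dotsc, d$ then recovers $X^{\alpha^{(0)}}(s;\Fdot)$, since the strata partition $\LQ_d(n)$.

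The substantive content therefore lies in the preliminary identification $X^{\alpha^{(0)}}(s;\Fdot) = \epsilon_s^{-1}(X^{\alpha}(\Fdot))$. One direction is straightforward: the preimage is closed in each stratum, because $\epsilon_s^a$ is a morphism to $\LG(n)$ and $X^{\alpha}(\Fdot)$ is closed in $\LG(n)$, and it contains the locally closed defining set $\text{ev}_s^{-1}(X^{\alpha}(\Fdot)) \subseteq \LM_d(n)$, hence it contains the entire Schubert variety. The reverse inclusion is the main obstacle: given $(D,M) \in \strtm{a}{d}{n}$ with $M(s) \in X^{\alpha}(\Fdot)$, one must exhibit a one-parameter family $N_t \in \text{ev}_s^{-1}(X^{\alpha}(\Fdot)) \subseteq \LM_d(n)$ with $N_t \to (D,M)$ as $t \to 0$. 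The natural construction perturbs the homogeneous coordinates of the boundary point $D \cdot M$ in $\P((S^d\C^2)^* \otimes {\rm L}(\omega_n))$ to eliminate the common factor $D$; the delicate step is to arrange this perturbation so that $N_t(s)$ remains inside $X^{\alpha}(\Fdot)$, which is subtle when $D(s) = 0$, since there $\epsilon_s$ fails to be continuous across strata at $(D,M)$.
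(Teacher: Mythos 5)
Your main reduction follows the paper's proof: both intersect $\epsilon_s^{-1}(X^{\alpha}(\Fdot))$ with each stratum $\strtm{a}{d}{n}$ and use that $\epsilon_s^a(D,M)=M(s)$ is independent of $D$, so that the intersection factors as the asserted product. Like the paper, you reduce the lemma to the identification $X^{\alpha^{(0)}}(s;\Fdot)=\epsilon_s^{-1}(X^{\alpha}(\Fdot))$ from the preceding paragraph, and the paper's one-line argument does not justify that identification either.

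However, your claim that the inclusion $X^{\alpha^{(0)}}(s;\Fdot)\subseteq\epsilon_s^{-1}(X^{\alpha}(\Fdot))$ is ``straightforward'' does not hold up. Being closed in each stratum does not make a set closed in $\LQ_d(n)$: the strata with $a>0$ are locally closed but not open, so $\epsilon_s^{-1}(X^{\alpha}(\Fdot))$ need not be closed and therefore need not contain the closure $X^{\alpha^{(0)}}(s;\Fdot)$ of $\mathrm{ev}_s^{-1}(X^{\alpha}(\Fdot))$. Already in $\LQ_1(1)\cong\P^3$, writing a quasimap as $(as+bt,\,cs+dt)$, taking $s=[1:0]$ and the point Schubert variety $X^{\alpha}=\{[0:1]\}$, the closure of $\mathrm{ev}_s^{-1}(X^{\alpha})$ is the plane $\{a=0\}$, whereas $\epsilon_s^{-1}(X^{\alpha})$ is the strictly smaller set $\{a=0,\,bc\neq0\}\cup\{a=b=0\}$, which misses the boundary points with $a=c=0$ and $b\neq0$. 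So the subtlety when $D(s)=0$ that you correctly isolate in discussing the reverse inclusion in fact afflicts the forward inclusion as well, and this forward direction is precisely where the asserted identification can fail. Your perturbation sketch for the reverse inclusion is plausible but not carried out; the forward inclusion, far from being easy, is the genuine gap.
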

\begin{proof}
For each $a\in\{0,\dotsc d\}$, we have
$
\left(\strtm{a}{d}{n}\right)~\cap~\epsilon_s^{-1}(X^{\alpha}(\Fdot))
=\P({S^a\C^2}^*)\times X^{\alpha^{(a)}}(s;\Fdot).
$
\end{proof}
We now state and prove the main theorem of this section.
\begin{thm}\label{intprob.thm}
Given partitions $\alpha_1,\dotsc\alpha_N\in{\mathcal P}_n$
such that $\sum_{i=1}^N|\alpha_i|=\binom{n{+}1}{2}{+}d(n{+}1)$,
general isotropic flags $\Fdot^1,\dotsc,\Fdot^N$, and
distinct points $s_1,\dotsc,s_N\in\P^1$,
the intersection
\begin{equation}\label{schint.eqn}
 X^{\alpha_1^{(0)}}(s_1;\Fdot^1)\cap\dotsb\cap{X^{\alpha_N^{(0)}}(s_N;\Fdot^N)}
\end{equation}
is transverse, and hence consists only of reduced points.
Each point of the intersection~(\ref{schint.eqn})
lies in $\LM_d(n)$, {\it i.e.}, corresponds to
a degree-$d$ map whose image $M(s_i)$
lies in $X^{\alpha_i}(\Fdot^i)$ for $i=1,\dotsc,N$.
\end{thm}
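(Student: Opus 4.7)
The plan is to combine the stratification $\LQ_d(n) = \bigsqcup_{a=0}^d Z_a$, with $Z_a := \P((S^a\C^2)^*) \times \LM_{d-a}(n)$ of dimension $\binom{n+1}{2} + d(n+1) - an$ (from the discussion preceding Lemma~\ref{stratum.lem}), with Kleiman's transversality theorem applied on each stratum. Preparing the ground, I would observe that $\Sp_{2n}\C$ acts transitively on $\LG(n)$ and lifts to an action on each $Z_a$ (trivially on the first factor, by post-composition on the second), that the evaluation map $\epsilon_{s_i}^a: Z_a \to \LG(n)$ is $\Sp_{2n}\C$-equivariant, and that its image is nonempty and $\Sp_{2n}\C$-stable, hence all of $\LG(n)$; so $\epsilon_{s_i}^a$ is surjective. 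By Lemma~\ref{stratum.lem}, the trace $X^{\alpha_i^{(0)}}(s_i;\Fdot^i) \cap Z_a$ is then $(\epsilon_{s_i}^a)^{-1}(X^{\alpha_i}(\Fdot^i))$, a subvariety of codimension $|\alpha_i|$ in $Z_a$.

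The core of the argument would be a dimension count ruling out every boundary stratum. For $a \geq 1$ and general flags $\Fdot^1, \dotsc, \Fdot^N$, Kleiman's theorem (applied iteratively using the transitive $\Sp_{2n}\C$-action on $\LG(n)$ and the equivariant surjection $\epsilon_{s_i}^a$) gives that these pullbacks intersect transversally on $Z_a$ in a subvariety of expected dimension
\[
\dim Z_a - \sum_{i=1}^N |\alpha_i| = \binom{n+1}{2} + d(n+1) - an - \Bigl(\binom{n+1}{2} + d(n+1)\Bigr) = -an < 0,
\]
so the intersection on $Z_a$ is empty. Hence only the open stratum $Z_0 = \LM_d(n)$ can contribute, and on $Z_0$ the same Kleiman argument gives a transverse intersection of expected dimension zero — a reduced finite collection of points, each corresponding to a genuine degree-$d$ map satisfying $M(s_i) \in X^{\alpha_i}(\Fdot^i)$ for all $i$.

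The main obstacle I anticipate is verifying that Kleiman's theorem truly applies, since the evaluation points $s_i \in \P^1$ are held fixed rather than varied; all genericity must come from the flags $\Fdot^i$. This reduces to checking that varying $\Fdot^i$ produces generic $\Sp_{2n}\C$-translates of $(\epsilon_{s_i}^a)^{-1}(X^{\alpha_i}(\Fdot^i))$ inside $Z_a$, which in turn follows from the transitivity of $\Sp_{2n}\C$ on the variety of isotropic flags together with the $\Sp_{2n}\C$-equivariance of $\epsilon_{s_i}^a$. A secondary point to confirm is that the closures of different strata do not conspire to produce unexpected intersection components, but this is subsumed by the stratum-by-stratum dimension count above.
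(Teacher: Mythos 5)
Your proposal follows the same route as the paper's proof: stratify $\LQ_d(n)$, reduce to each stratum via Lemma~\ref{stratum.lem}, apply Kleiman's transversality theorem with the $\Sp_{2n}\C$-action supplying the genericity, and conclude with the dimension count $\dim Z_a - \sum|\alpha_i| = -an$ to kill the boundary strata. Your explicit discussion of why Kleiman still applies with the $s_i$ held fixed (equivariance of the evaluation maps, transitivity on flags) is a point the paper's fiber-product formulation treats implicitly, but the underlying argument is the same.
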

\begin{proof}
For each $a=0,\dotsc,d$, the Schubert variety
$X^{\alpha^{(a)}}(s,\Fdot)$ is the preimage
of the Schubert variety $X^{\alpha}(\Fdot)\subseteq\LG(n)$
under the evaluation map $\epsilon_s$, which is regular
on the stratum $\strtm{a}{d}{n}$.  By Lemma~\ref{stratum.lem},
it suffices to consider the intersection~(\ref{schint.eqn})
on each of these strata.  Fix $a\in\{0,\dotsc,d\}$, and
consider the product of evaluation maps
\[
\prod_{i=1}^N\epsilon_{s_i}:\left(\strtm{a}{d}{n}\right)^N\to\LG(n)^N
\]
and the injection
\[
X^{\alpha_1}(\Fdot^1)\times\dotsb\times{X^{\alpha_n}(\Fdot^N)}
 \hookrightarrow\LG(n)^N.
\]
The intersection
$(X^{\alpha_1^{(0)}}\cap\dotsb\cap{X^{\alpha_N^{(0)}}})
\cap\strtm{a}{d}{n}$
is isomorphic to the fiber product
\[
\left(\strtm{a}{d}{n}\right)^N\times_{\LG(n)^N}
 \left(X^{\alpha_1}(\Fdot^1)\times\dotsb\times{X^{\alpha_N}(\Fdot^N)}\right).
\]
For each $a=0,\dotsc,d$, Kleiman's theorem~\cite[Corollary 2]{Kle74}
implies that this intersection is proper and tranverse.
Considering the dimensions of these subvarieties,
we see that this intersection is therefore zero-dimensional
when $a=0$ and empty when $a>0$.
\end{proof}
%
\subsection{Gromov-Witten invariants and Quantum Cohomology}
\label{qcoh.ssec}

A common approach to Question~\ref{intprob.ques} is through the
quantum cohomology ring of the Lagrangian Grassmannian
$QH^*(\LG(n))$, defined as follows.  The cohomology
ring $H^*(\LG(n);\Z)$ has a $\Z$-basis consisting
of the classes of the Schubert
variety (the {\it Schubert classes})
$\sigma_{\alpha}:=[X^{\alpha}]$, where
$\alpha\in{\mathcal P}_n$.
We will denote by $\alpha^*$
the dual partition, defined so that
$\sigma_{\alpha}\cdot\sigma_{\alpha^*}
=1\in H^*(\LG(n);\Z)$~\cite{HiBo86}.
The correspondence $\alpha\leftrightarrow\alpha^*$
is bijective and order reversing.

The (small) {\it quantum cohomology ring} is
the $\Z[q]$-algebra isomorphic to
$H^*(\LG(n);\Z)\otimes\Z[q]$ as a $\Z[q]$-module, and
with multiplication defined by the formula
\begin{eqnarray}\label{qmult.eqn}
 \sigma_{\alpha}\cdot\sigma_{\beta}
 &=&
 \sum\langle\alpha, \beta, \gamma^*\rangle_d~\sigma_{\gamma}q^d,
\end{eqnarray}
where the sum is over all $d\geq 0$ and $\gamma$ such that
$|\gamma|=|\alpha|{+}|\beta|{-}d\binom{n{+}1}{2}$.  For partitions
$\alpha$, $\beta$, and $\gamma$ in ${\mathcal P}_n$, the coefficients
$\langle\alpha, \beta, \gamma^*\rangle_d$ are the
{\it Gromov-Witten invariants}, defined as the
number of algebraic maps $M:\P^1\to\LG(n)$ of degree $d$ such
that $M(0)\in X^{\alpha}(F_{\bullet})$, $M(1)\in X^{\beta}(G_{\bullet})$,
and $M(\infty)\in X^{\gamma^*}(H_{\bullet})$, where $F_{\bullet}$,
$G_{\bullet}$, and $H_{\bullet}$ are general isotropic flags
({\it i.e.}, general translates of the standard
flag under the action of the group $\Sp_{2n}\C$).

A special case of Pieri's rule gives a formula for the product
of a Schubert class $\sigma_{\alpha}\in H^*(\LG(n);\Z)$ with the
{\it simple} Schubert class $\sigma_{_{\one}}$~\cite{HiBo86}:
\[
 \sigma_{\alpha}\cdot\sigma_{_{\one}}
 =\sum_{\alpha\lessdot\beta}2^{N(\alpha,\beta)}\sigma_{\beta},
\]
the sum over all partitions $\beta$ obtained from
$\alpha$ by adding a box above the diagonal,
along with its image under reflection about the
diagonal.  The exponent $N(\alpha,\beta)=1$
if $(\alpha,\beta)\in{\mathcal D}_n$
and $N(\alpha,\beta)=0$ otherwise
({\it cf}. Proposition~\ref{fiber.prop}).
In~\cite{KrTa03}, Kresch and Tamvakis give a
quantum analogue of Pieri's rule.  We state the 
relevant special case of this rule:
\begin{prop}\label{qpr.prop}\cite{KrTa03}
 For any $\alpha\in{\mathcal P}_n$, we have
 \[
  \sigma_{\alpha}\cdot\sigma_{_{\one}}
  =\sum_{\alpha\lessdot\beta}2^{N(\alpha,\beta)}\sigma_{\beta}
  +\sigma_{\gamma}q
 \]
 in $QH^*(\LG(n))$, where the first sum is from the classical Pieri
 rule, and $\sigma_{\gamma}=0$ unless $\alpha$ contains the
 hook-shaped partition $(n,1^{n{-}1})$, in which case
 $\gamma$ is the partition obtained from
 $\alpha$ by removing this hook.
\end{prop}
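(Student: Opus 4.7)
The plan is to expand the left-hand side according to the definition~(\ref{qmult.eqn}) and then match terms by $q$-degree $d$. The dimension condition $|\gamma| = |\alpha|+1-d(n+1)$ combined with $0\leq |\gamma|\leq\binom{n+1}{2}$ restricts the possible $d$, and the first task would be to show that $d\geq 2$ never contributes; the remaining pieces at $d=0$ and $d=1$ must then be matched with the two summands on the right. The $d=0$ piece is exactly the classical Pieri formula of Hiller--Boe recalled immediately before the proposition, so it contributes $\sum_{\alpha\lessdot\beta} 2^{N(\alpha,\beta)}\sigma_\beta$ and needs no further argument.

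For the $d=1$ contribution, I would use Theorem~\ref{intprob.thm} to interpret the Gromov--Witten invariant $\langle\alpha,\one,\gamma^*\rangle_1$ as the number of reduced points in a transverse triple intersection of Schubert subvarieties of $\LQ_1(n)$ that in fact all lie in $\LM_1(n)$. Thus the problem reduces to counting degree-one maps $M\colon\P^1\to\LG(n)$ with $M(0)\in X^{\alpha}(\Fdot^1)$, $M(1)\in X^{\one}(\Fdot^2)$, and $M(\infty)\in X^{\gamma^*}(\Fdot^3)$ for general isotropic flags. The standard parametrization of lines in $\LG(n)$ identifies such a map with a pair of subspaces $V_{n-1}\subset V_{n+1}\subseteq\C^{2n}$ in which $V_{n-1}$ is isotropic and $V_{n+1}=V_{n-1}^{\perp}$, the image being the pencil of Lagrangians $E$ satisfying $V_{n-1}\subset E\subset V_{n+1}$.

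The count then reduces to a Schubert calculus problem on the space of such flags, where the simple condition at $s=1$ is essentially a hyperplane section. The expected main obstacle is the last step: identifying precisely when the resulting intersection is nonempty. One must show that it vanishes unless $\alpha$ contains the hook $(n,1^{n-1})$, and that in the remaining case the unique surviving $\gamma$ is obtained by removing this hook. The hook $(n,1^{n-1})$ has a natural meaning here as the Schubert class of Lagrangians containing a fixed nonzero vector; with this interpretation the vanishing/uniqueness becomes a classical Schubert calculus check, and the overall outcome recovers the special case of the quantum Pieri rule stated by Kresch and Tamvakis. An alternative would be to quote Corollary 1 of~\cite{KrTa03} and extract the $p=1$ specialization directly.
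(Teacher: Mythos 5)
The paper offers no proof of this proposition at all: it is stated with the citation ``\cite{KrTa03}'' and is simply quoted as a known special case of the Kresch--Tamvakis quantum Pieri rule. So your ``alternative'' (quote Corollary 1 of~\cite{KrTa03} directly) is in fact what the paper does, and your main proposal is a genuine attempt at a proof where the paper supplies none.

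As a sketch of an actual proof your plan has the right overall shape -- match terms by $q$-degree, recognize the $d=0$ piece as classical Pieri, and read the $d=1$ GW invariant off of Theorem~\ref{intprob.thm} via the parametrization of lines in $\LG(n)$ by isotropic $(n{-}1)$-planes -- but it has two real gaps. First, the assertion that the dimension constraint ``restricts the possible $d$'' so that ``$d\geq 2$ never contributes'' is not what the dimension count gives. With $|\gamma|=|\alpha|+1-d(n{+}1)$ (and note the paper's display~(\ref{qmult.eqn}) has a typo here, $\binom{n+1}{2}$ where $n{+}1$ should appear; the version consistent with Question~\ref{intprob.ques} and Theorem~\ref{intprob.thm} is the one you wrote), the constraints $0\leq|\gamma|\leq\binom{n+1}{2}$ and $|\alpha|\leq\binom{n+1}{2}$ only force $d\leq\lfloor n/2\rfloor$, so for $n\geq 4$ degrees $d\geq 2$ survive the naive count and ruling them out is a separate, non-obvious step (this is one of the substantive contributions of~\cite{KrTa03}). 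Second, once you have reduced the $d=1$ invariant to intersection numbers on the parameter space of isotropic $(n{-}1)$-planes, establishing the precise vanishing/hook statement is essentially the technical heart of the Kresch--Tamvakis argument; calling it ``a classical Schubert calculus check'' underestimates how much is being deferred. In short, the scaffolding is plausible and uses the paper's Theorem~\ref{intprob.thm} in the intended way, but the two points above are exactly where the actual proof lives, and as written they are missing rather than merely compressed.
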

For $\alpha^{(a)}\lessdot\beta^{(b)}$
in ${\mathcal D}_{d,n}$, let
$N'(\alpha^{(a)},\beta^{(b)})=N(\alpha,\beta)$ if
$b=a$, and let $N'(\alpha^{(a)},\beta^{(b)})=0$
if $b=a+1$.
Let $\alpha^{(0)}\in{\mathcal P}_{d,n}$, and let $\pi\in\N$
be its {\it corank} in ${\mathcal P}_{d,n}$;
that is, $\pi$ is the length of any
saturated chain of elements
$\alpha^{(d)}=x_0\lessdot\dotsb\lessdot x_{\pi}=(n^n)^{(d)}$,
where $x_i\in{\mathcal P}_{d,n}$ for
all $i=0,\dotsc,\pi$ and $(n^n)^{(d)}$ is the maximal
element of ${\mathcal P}_{d,n}$.  By Theorem~\ref{degree.thm},
$\pi$ is the dimension of $X^{\alpha^{(0)}}$.
The quantum Pieri rule of Proposition~\ref{qpr.prop}
has a simple formulation in terms of the distributive
lattice ${\mathcal P}_{d,n}$:

\begin{thm}\label{coh.prod.thm}
The quantum Pieri rule~\ref{qpr.prop} has the following
formulation in terms of the poset ${\mathcal P}_{d,n}$:
\[
 (\sigma_{\alpha}q^a)\cdot\sigma_{_{\one}}
 =\sum_{\alpha^{(a)}\lessdot\beta^{(b)}}2^{N'(\alpha^{(a)},\beta^{(b)})}
   \sigma_{\beta}q^b
\]
As a consequence, we have
\begin{eqnarray*}
 \sigma_{\alpha}\cdot(\sigma_{_{\one}})^{\pi}
 &=&\deg(X^{\alpha^{(0)}})\cdot\sigma_{(n^n)}q^d~({\rm mod}~d{+}1).
\end{eqnarray*}
\end{thm}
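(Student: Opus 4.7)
The plan is to prove the first (Pieri-type) identity by matching each of the three cover-types in ${\mathcal P}_{d,n}$ listed after Proposition~\ref{distr.prop} against the terms of the Kresch-Tamvakis quantum Pieri rule (Proposition~\ref{qpr.prop}), and then to deduce the degree formula by iterating $\pi$ times and invoking Theorem~\ref{degree.thm}.

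For the first identity, the classical part of Proposition~\ref{qpr.prop} expresses $\sigma_\alpha\cdot\sigma_{_{\one}}$ as a sum over covers $\alpha\lessdot\beta$ in ${\mathcal P}_n$ weighted by $2^{N(\alpha,\beta)}$. Fixing a $q$-level $a$, the covers of $\alpha^{(a)}$ in ${\mathcal P}_{d,n}$ that stay at level $a$---types~(1) and~(2)---are in bijection with covers of $\alpha$ in ${\mathcal P}_n$. I verify directly from the sequence descriptions that a type~(1) cover (same number of negatives) corresponds to adding a box strictly above the diagonal together with its mirror image; such a cover is obtained as $\pi_n$ of a nonsymmetric element of $\tangbin{n}{n}$, hence $(\alpha,\beta)\in{\mathcal D}_n$ and $N'=1$. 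A type~(2) cover (one fewer negative for $\beta$) corresponds to adding a single box on the diagonal, gives $(\alpha,\beta)\notin{\mathcal D}_n$, and has $N'=0$. The quantum term $\sigma_\gamma q$ of Proposition~\ref{qpr.prop} is nonzero exactly when $\alpha$ contains the hook $(n,1^{n-1})$, and $\gamma$ is obtained by removing it. Translating via the lattice-path picture of Section~\ref{lag.ssec}, hook-containment in $\alpha$ is equivalent to the existence of a unique type~(3) cover $\alpha^{(a)}\lessdot\gamma^{(a+1)}$; since type~(3) covers never lie in ${\mathcal D}_{d,n}$, the coefficient $2^{N'}=1$ matches the unit coefficient of $\sigma_\gamma q$. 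Interpreting both sides modulo $q^{d+1}$ (so that the quantum term is suppressed when $a=d$), the three contributions combine to give the first identity.

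For the consequence, iterate the first identity $\pi$ times. Expanding $\sigma_\alpha\cdot(\sigma_{_{\one}})^\pi$ produces a sum indexed by saturated chains $\alpha^{(0)}=x_0\lessdot x_1\lessdot\dotsb\lessdot x_\pi$ in ${\mathcal P}_{d,n}$, of terms
\[
 \Bigl(\prod_{i=0}^{\pi-1}2^{N'(x_i,x_{i+1})}\Bigr)\sigma_\beta q^b
\]
where $x_\pi=\beta^{(b)}$. Modulo $q^{d+1}$, only chains ending at the unique maximum $(n^n)^{(d)}$ contribute to the coefficient of $\sigma_{(n^n)}q^d$; since $\pi$ is the corank of $\alpha^{(0)}$, these are exactly the maximal chains of the interval $[\alpha^{(0)},(n^n)^{(d)}]\subseteq{\mathcal P}_{d,n}$. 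Because ${\mathcal D}_{d,n}$ is ranked (its intervals inherit grading from the distributive lattice ${\mathcal P}_{d,n}$), the number of ${\mathcal D}_{d,n}$-covers in any maximal chain of the interval equals the constant $D-P$ of Section~\ref{hilb.ssec}. Consequently the coefficient of $\sigma_{(n^n)}q^d$ is
\[
 2^{D-P}c_{P+1}^{0}=\deg(X^{\alpha^{(0)}}),
\]
by Theorem~\ref{degree.thm}.

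The principal obstacle is the combinatorial matching in part~(1) between type~(3) covers and hook removal. Types~(1) and~(2) reduce to the classical Pieri rule at a fixed $q$-level, where the distinction between off-diagonal and on-diagonal additions is transparent from the lattice-path description of Section~\ref{lag.ssec}. By contrast, the sequence-theoretic data of a type~(3) cover ($\bar n\in\beta$, $n\in\alpha$, one extra negative, $b=a+1$) must be reconciled with the partition-theoretic hook condition of Proposition~\ref{qpr.prop}; this requires a careful translation via Remark~\ref{negpos.rem} and Proposition~\ref{sym.prop}. One should also verify that no term of $q$-degree exceeding $a+1$ ever arises from $\sigma_\alpha q^a\cdot\sigma_{_{\one}}$, so that iteration respects the truncation modulo $q^{d+1}$ and cannot skip $q$-levels.
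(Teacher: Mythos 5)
Your proof takes essentially the same approach as the paper's: match the three cover types in $\mathcal{P}_{d,n}$ against the classical and quantum terms of Proposition~\ref{qpr.prop} to get the first identity, then iterate $\pi$ times and count saturated chains via Theorem~\ref{degree.thm} for the second. The paper's proof is much terser (two sentences asserting the cover/term correspondence, and ``follows by induction''); you fill in the verification of the $N'$ values and make explicit that the iterated Pieri expansion indexes maximal chains of $[\alpha^{(0)},(n^n)^{(d)}]$, each carrying the same number of doset covers because the interval is ranked. One small caution: when you invoke ``the constant $D-P$ of Section~\ref{hilb.ssec},'' you should read $D$ and $P$ as the ranks of the interval's own doset and poset, i.e., the data of the ASL for the Schubert variety $X^{\alpha^{(0)}}$ itself (as set up in the corollary following Theorem~\ref{deadon.thm}), not those of the full $\mathcal{D}_{d,n}$; otherwise the argument is complete and correct.
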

\begin{proof}
The element $\gamma^{(a{+}1)}\in{\mathcal P}_{d,n}$,
where $\gamma$ is as the partition obtained by removing
a maximal hook from $\alpha$ in Proposition~\ref{qpr.prop},
is the unique cover of $\alpha^{(a)}\in{\mathcal P}_{d,n}$ with
superscript $a{+}1$.  The remaining covers (with
superscript $a$) index the sum in Proposition~\ref{qpr.prop}.

The second formula follows by induction from the first.
\end{proof}

The appearance of the number $\deg(X^{\alpha^{(0)}})$
in the last statement of Theorem~\ref{coh.prod.thm} is for purely
combinatorial reasons: it is the number of saturated
chains $\alpha^{(0)}\lessdot\dotsb\lessdot{(n^n)^{(d)}}$
in ${\mathcal D}_{d,n}$, counted with multiplicity.
Since $X^{\one^{(0)}}$ is a hyperplane section of $\LQ_d(n)$,
this is also the number of points in the intersection
\begin{equation}\label{qpi.eqn}
 X^{\alpha^{(0)}}(s;\Fdot)\cap\left(
 \bigcap_{i=1}^{\pi}{X^{\one^{(0)}}(s_i;\Fdot^i)}\right),
\end{equation}
the intersection of $X^{\alpha^{(0)}}(s;\Fdot)$ with
$\pi={\rm codim}(X^{\alpha^{(0)}}(s;\Fdot))$ general translates
of the hyperplane section $X^{\one^{(0)}}$.
On the other hand, multiplication
in $QH^*(\LG(n))$ represents the conjunction of conditions
that a map takes values in Schubert varieties at generic
points of $\P^1$.  In this way, the quantum cohomology
identity of Theorem~\ref{coh.prod.thm} has an interpretation as
the number of points in the intersection~(\ref{qpi.eqn})
of Schubert varieties in $\LQ_d(n)$.

\section{The straightening law}\label{str8law.sec}

\subsection{A basis for $S^d\C^2\otimes {\rm L}(\omega_n)^*$}

The Drinfel'd Lagrangian
Grassmannian embeds in the projective space
$\P((S^d\C^2)^*\otimes {\rm L}(\omega_n))$.
We begin by describing convenient bases
for the representation ${\rm L}(\omega_n)$
and its dual ${\rm L}(\omega_n)^*$.

For $\alpha\in\angbin{n}{n}$ and
positive integers, set
$v_{\alpha}^{(a)}:=s^at^{d{-}a}\otimes
 e_{\alpha_1}\wedge\dotsb\wedge e_{\alpha_n}
 \in (S^d\C^2)^*\otimes\bigwedge^n\C^{2n}$,
and let
$p_{\alpha}^{(a)}:=u^av^{d{-}a}\otimes
 e_{\alpha_1}^*\wedge\dotsb\wedge e_{\alpha_n}^*
 \in S^d\C^2\otimes\bigwedge^n{\C^{2n}}^*$
be the {\it Pl\"ucker coordinate}
indexed by $\alpha^{(a)}\in{\mathcal D}_{d,n}$,
where $\{u,v\}\in\C^2$ and
$\{s,t\}\in(\C^2)^*$ are dual bases.

The representation
${\rm L}(\omega_n)^*$ is the quotient of
$\bigwedge^n{\C^{2n}}^*$
by the linear subspace
$L_n=\Omega\wedge\bigwedge^{n{-}2}{\C^{2n}}^*$
described in Proposition~\ref{ctxn.prop}.  Thus
$S^d\C^2\otimes {\rm L}(\omega_n)^*$ is
the quotient of
$S^d\C^2\otimes \bigwedge^n{\C^{2n}}^*$
by the linear subspace:
\begin{eqnarray*}
L_{d,n} &:=& 
 S^d\C^2\otimes L_n\,.
\end{eqnarray*}
Note that $L_{d,n}$ is spanned by the linear forms
\begin{eqnarray}\label{ldn.eqn}
\ell_{\alpha}^{(a)} &:=& u^av^{d{-}a}\otimes
 \sum_{i\mid\{\bari,i\}\cap\alpha=\emptyset}
  e_{\bari}^*\wedge e_i^*\wedge
  e_{\alpha_1}^*\wedge\dotsb\wedge e_{\alpha_{n{-}2}}^*
\end{eqnarray}
for $\alpha\in\angbin{n}{n-2}$
and $a=0,\dotsc,d$.  The linear form
(\ref{ldn.eqn}) is simply $u^av^{d{-}a}$
tensored with a linear form generating
$L_n$.  Each term in the
linear form~(\ref{ldn.eqn}) is a Pl\"ucker
coordinate indexed by a sequence
involving both $i$ and $\bari$, for some $i\in[n]$.

Let $S\subseteq\SL_2(\C)$ and
$T\subseteq\Sp_{2n}(\C)$ be
maximal tori.  The torus $S$ is one
dimensional, so that its
Lie algebra ${\mathfrak s}$ has
basis consisting of a single
element $H\in{\mathfrak s}$.
For $i\in\angles{n}$, let
$h_i:=E_{ii}-E_{\bari\bari}$.
The set $\{h_i\mid i\in[n]\}$ is
a basis for the Lie algebra
${\mathfrak t}$ of $T\subseteq\Sp_{2n}(\C)$.
The weights of the maximal torus
$S\times T\subseteq\SL_2(\C)\times\Sp_{2n}(\C)$
are elements of ${\mathfrak s}^*\oplus{\mathfrak t}^*$. 
The Pl\"ucker coordinate
$p_{\alpha}^{(a)}\in S^d\C^2\otimes(\bigwedge^n\C^{2n})^*$
is a weight vector of weight
\begin{equation}\label{weight.eqn}
(d{-}2a)H^*
 +\sum_{i\mid\bar\alpha_i\not\in\alpha}h_{\alpha_i}^*.
\end{equation}
Each linear form~(\ref{ldn.eqn})
lies in a unique weight space.
Thus, to find a basis for
$S^d\C^2\otimes{\rm L}(\omega_n)^*$,
it suffices to find a basis for
each weight space.  We therefore fix
the weight~(\ref{weight.eqn}) and its
corresponding weight space in the
following discussion.  We reduce
to the case that the
weight~(\ref{weight.eqn}) is in
fact $0$, as follows.

For each $\alpha\in\angbin{n}{n{-}2}$,
we have an element
$\ell_{\alpha}=\Omega\wedge p_{\alpha}\in L_n$.  This is
a weight vector of weight
$\omega_{\alpha}:=
h_{\alpha_1}^*{+}\dotsb{+}h_{\alpha_k}^*\in{\mathfrak t}^*$.
Set $\widetilde{\alpha}:=\{i\in\alpha\mid\bari\not\in\alpha\}$
and observe that
$\omega_{\widetilde{\alpha}}=\omega_{\alpha}$.
The elements $\alpha\in\angbin{n}{n{-}2}$ such that
$\ell_{\alpha}\in(L_n)_{\omega}$ are those
satisfying $\omega_{\alpha}=\omega$.  That is,
$(L_n)_{\omega}=
\langle\Omega\wedge p_{\alpha}
\mid\omega_{\alpha}=\omega\rangle$.

The shape of the linear form $\ell_{\alpha}$
is determined by the number of pairs
$\{\bari,i\}\subseteq\alpha$;
it is the same,
up to multiplication of
some variables by $-1$,
as the linear form
$\ell_{\alpha\setminus\widetilde{\alpha}}=
 \Omega\wedge p_{\alpha\setminus\widetilde{\alpha}}
 \in L_{n{-}|\widetilde{\alpha}|}$,
of weight $\omega_{\alpha\setminus\widetilde{\alpha}}=0$.
It follows that the generators of
$(L_n)_{\omega_{\alpha}}$ have the
same form as those of
$(L_{n{-}|\widetilde{\alpha}|})_{\omega_{\alpha\setminus\widetilde{\alpha}}}$,
up to some signs arising from sorting
the indices.  Since these signs do not
affect linear independence, it suffices to
find a basis for $(L_n)_0$,
from which it is then straightforward
to deduce a basis for 
$(L_n)_{\omega_{\alpha}}$.
We thus assume that
the weight space in question is
$(L_n)_0$.
This implies that $n$ is even; set $m:=n/2$.

\begin{ex}
We consider linear forms which span
$(L_6)_{h_1^*+h_3^*}$.
Let $m=3$ (so $n=6$) and $\omega=h_1^*+h_3^*$.
If $\alpha=\bar6136$, then
$\widetilde{\alpha}=13$
and $\omega_{\alpha}=\omega$.  We have
$\ell_{\alpha}=p_{\bar6\bar51356}
+p_{\bar6\bar41346}-
 p_{\bar6\bar21236}$.
The equations for the weight
space $(L_n)_{\omega}$ are
\begin{eqnarray*}
\ell_{\bar6136}&=&
p_{\bar6\bar51356}
+p_{\bar6\bar41346}-p_{\bar6\bar21236}\\
\ell_{\bar5135}&=&
p_{\bar6\bar51356}
+p_{\bar5\bar41345}-p_{\bar5\bar21235}\\
\ell_{\bar4134}&=&
p_{\bar6\bar41346}
+p_{\bar5\bar41345}-p_{\bar4\bar21234}\\
\ell_{\bar2123}&=&
p_{\bar6\bar21236}
+p_{\bar5\bar21235}+p_{\bar4\bar21234}
\end{eqnarray*}
We can obtain the linear forms which span
$(L_4)_0$ (see Example~\ref{lg4})
by first removing every occurrence of $1$ and $3$ in
the subscripts above and then flattening the
remaining indices.  That is, we apply the following
replacement (and similarly for the negative indices):
$6\mapsto 4$, $5\mapsto 3$,
$4\mapsto 2$, and $2\mapsto 1$.
We then replace a variable by its negative if
$2$ appears in its index; this is to keep track of
the sign of the permutation sorting the sequence
$(\bari,i,\alpha_1,\dotsc,\alpha_{n{-}2})$ in each
term of $\ell_{\alpha}$ (see Equation~\ref{ldn.eqn}).
\end{ex}

By Proposition~\ref{ctxn.prop}, the map
\[
\Bigl(\bigwedge^{2m}\C^{4m}\Bigr)_0\longrightarrow
\Bigl(\bigwedge^{2m{-}2} \C^{4m}\Bigr)_0
\]
given by contraction with the form
$\Omega\in\bigwedge^2(\C^{4m})^*$
is surjective, with kernel
$\bigl({\rm L}(\omega_{2m})\bigr)_0$.
Since the set
$\bigl\{(\bar{\alpha},\alpha)~\bigl\lvert\bigr.~\alpha
 \in\binom{[2m]}{k}\bigr\}$
is a basis of $\bigl(\bigwedge^{2k} \C^{4m}\bigr)_0$
(for any $k\leq m$), we have
\begin{eqnarray*}
\dim\bigl({\rm L}(\omega_{2m})\bigr)_0
&=&
\dim\Bigl(\bigwedge^{2m}\C^{4m}\Bigr)_0
 -\dim\Bigl(\bigwedge^{2m{-}2}\C^{4m}\Bigr)_0\\
&=&\tbinom{2m}{m}-\tbinom{2m}{m-2}\\
&=&
\tfrac{1}{m{+}1}\tbinom{2m}{m}\,.
\end{eqnarray*}
This number is equal to the number
of admissible pairs of weight $0$.

\begin{lemma}
$\dim\bigl({\rm L}(\omega_n)\bigr)_0$
is equal to the number of admissible pairs
$(\alpha,\beta)\in{\mathcal D}_{n}$ of weight 
$\frac{\omega_{\alpha}{+}\omega_{\beta}}{2}=0$.
\end{lemma}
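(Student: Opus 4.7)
I plan to count weight-$0$ admissible pairs and match the total with the dimension $C_m = \frac{1}{m+1}\binom{2m}{m}$ of $(L(\omega_n))_0$ computed just above, for $n = 2m$; both sides vanish when $n$ is odd.

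First, I would observe that $\pi_n:\angbin{n}{n}\to\mathcal{D}_n$ is weight-compatible: for any $\gamma$, the pair $\pi_n(\gamma)$ has weight $\omega_\gamma$. Meet and join in the distributive lattice $\angbin{n}{n}$ are coordinate-wise min and max on sorted sequences, so $\omega_{\alpha\wedge\beta}+\omega_{\alpha\vee\beta}=\omega_\alpha+\omega_\beta$; combined with $\omega_{\gamma^t}=\omega_\gamma$ (immediate from $\gamma^t=\angles{n}\setminus\bar\gamma$ together with $h^*_{\bari}=-h^*_i$), this gives $\omega_\alpha+\omega_\beta=2\omega_\gamma$ when $(\alpha,\beta)=\pi_n(\gamma)$. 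A coefficient check on each $h^*_i$ then shows that $\omega_\gamma=0$ precisely when $\gamma=\{\pm i\mid i\in I\}$ for some $I\subseteq[n]$ with $|I|=n/2$, forcing $n=2m$ even. By Proposition~\ref{fiber.prop}, each fiber of $\pi_n$ contains a unique Northeast element, so the number of weight-$0$ admissible pairs equals the number of Northeast $\gamma$ in the set $W_0$ of weight-$0$ elements of $\angbin{n}{n}$.

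The key combinatorial step is to translate the condition $\gamma_-^t\subseteq\gamma_+$ on the partition $\lambda_\gamma$ into a condition on the indexing set $I=\{i_1<\dotsb<i_m\}$. Using the explicit formulas $\lambda_k=i_{m+1-k}+k-1$ for $k\leq m$ and $\lambda_k=k-i_{k-m}$ for $k>m$, I would verify by case analysis on the relative position of boxes to the diagonal that the implication ``$\lambda_i\geq j$ and $i>j$ imply $\lambda_j\geq i$'' reduces to the clean condition $|I\cap[k]|\leq k/2$ for every $k\in[2m]$. The number of $m$-element subsets of $[2m]$ satisfying this inequality is a classical Catalan count ($C_m$, via the standard bijection with Dyck paths), agreeing with $\dim(L(\omega_n))_0$.

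The main obstacle is the combinatorial translation of the Northeast condition into the clean inequality on $I$. The case analysis must handle the three cases in which the row and column of the relevant partition box lie above, below, or straddle the diagonal, and requires direct manipulation of the two-branch formula for $\lambda_\gamma$. Once this translation is in hand, the final count is the standard ballot-problem identity.
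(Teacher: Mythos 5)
Your proposal is correct, and it arrives at the Catalan number $C_m = \tfrac{1}{m+1}\binom{2m}{m}$ by a route genuinely different from the paper's. The paper works directly with the admissible pairs: it shows that a weight-zero pair $(\alpha,\beta)\in\mathcal{D}_n$ must have $\beta = \alpha^t$ with $\alpha_+\sqcup\beta_+=[2m]$, and then identifies such pairs with standard Young tableaux of rectangular shape $(m,m)$ (placing $\alpha_+$ in one row, $\beta_+$ in the other), finishing with the hook length formula. You instead pass through the weight-zero elements of $\angbin{n}{n}$: using weight-compatibility of $\pi_n$ (which indeed follows from $\omega_{\gamma\wedge\gamma^t}+\omega_{\gamma\vee\gamma^t}=2\omega_\gamma$, since meet and join are coordinate-wise min/max and the weight is $\sum_i h^*_{\gamma_i}$) and the unique-Northeast-representative statement of Proposition~\ref{fiber.prop}, you reduce to counting Northeast $\gamma=\{\pm i\mid i\in I\}$ with $|I|=m$, and then translate the Northeast condition into a ballot inequality. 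This is a legitimate bijection, and it has the virtue of directly exhibiting the Northeast sequences that are used throughout Section~\ref{str8law.sec} (in Lemmas~\ref{basis.lemma} and~\ref{smaller.lemma}), whereas the paper's tableau argument is shorter but does not foreshadow that machinery.

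On the part you flag as the ``main obstacle'': the translation does work out as you predict, and in fact cleanly. Your formulas $\lambda_k = i_{m+1-k}+k-1$ for $k\leq m$ and $\lambda_k = k - i_{k-m}$ for $k>m$ are correct. From them, $\lambda_k\geq m$ for $k\leq m$ and $\lambda_k\leq m$ for $k>m$, so the implication ``$\lambda_i\geq j,\ i>j \Rightarrow \lambda_j\geq i$'' is automatic when both $i,j\leq m$ and vacuous when both $i,j>m$; the only real constraint comes from $i>m\geq j$ and reduces, after the substitution $a=i-m$, $b=m+1-j$, to the single-variable condition $i_a\geq 2a$ for all $a\in[m]$. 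This is equivalent to your $|I\cap[k]|\leq k/2$ for all $k$, and the count is then the standard ballot/Dyck-path $C_m$. The $m=2$ check (Northeast sequences $\bar4\bar334$ and $\bar4\bar224$, i.e.\ $I=\{3,4\}$ and $\{2,4\}$) agrees with Example~\ref{lg4.ex}. So the plan closes completely; writing out the three cases is routine bookkeeping rather than a genuine gap.
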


\begin{proof}
Recall that each trivial admissible pair
$(\alpha,\alpha)$, where
\[
\alpha=(\bar{a}_1,\dotsc,\bar{a}_s,b_1,\dotsc,b_{n-s})
 \in{\mathcal D}_{n}
\]
indexes a weight vector, of weight
$\sum_{i=1}^{n{-}s}h_{b_i}^*-\sum_{i=1}^sh_{a_i}^*$.
Also, the non-trivial admissible pairs
are those $(\alpha,\beta)$ for
which $\alpha<\beta$ have the same
number of negative elements.  Therefore,
the admissible pairs of weight zero are
the $(\alpha,\beta)\in{\mathcal D}_{n}$
such that $\beta=(\bar{a}_m,\dotsc ,\bar{a}_1,b_1,\dotsc,b_m)$,
$\alpha=(\bar{b}_m,\dotsc ,\bar{b}_1,a_1,\dotsc,a_m)$, and
the sets
$\{a_1,\dotsc,a_m\}$ and
$\{b_1,\dotsc,b_m\}$ are disjoint.
This last condition is equivalent to $a_i>b_i$
for all $i\in[m]$.  The number
of such pairs is equal to
the number of standard tableaux
of shape $(m^2)$ (that is,
a rectangular box with $2$ rows
and $m$ columns) with entries
in $[2m]$.  By the hook
length formula~\cite{Ful97}
this number is $\frac{1}{m{+}1}\binom{2m}{m}$.
\end{proof}

The weight vectors
$p_{\alpha}\in\bigl({\bigwedge^n\C^{4m}}^*\bigr)_0$
are indexed by sequences of the form
\[
\alpha=
 (\bar\alpha_m,\dotsc,\bar\alpha_1,\alpha_1,\dotsc,\alpha_m)
\]
which can be abbreviated by the positive subsequence
$\alpha_+:=(\alpha_1,\dotsc,\alpha_m)\in\binom{[2m]}{m}$
without ambiguity.
We take these as an indexing set for the variables appearing
in the linear forms~(\ref{ldn.eqn}).

With this notation, the positive parts of Northeast sequences
are characterized in Proposition~\ref{NEseq.prop}.  The
proof requires the following definition.
\begin{defn}
A {\it tableau} is a partition whose boxes
are filled with
integers from the set $[n]$, for some $n\in\N$.
A tableau is {\it standard} if the entries strictly
increase from left to right and top to bottom.
\end{defn}
\begin{prop}\label{NEseq.prop}
Let $\alpha\in\angbin{2m}{2m}$ be a Northeast sequence.
Then the positive part of $\alpha$ satisfies
$\alpha_+\geq{24\dotsb(2m)}\in\binom{[2m]}{m}$.
In particular, no Northeast sequence contains $1\in[2m]$
and every Northeast sequence contains $2m\in[2m]$.
\end{prop}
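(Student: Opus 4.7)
The plan is to convert the Northeast condition on the associated partition $\lambda$ into an implication between its row lengths, and then extract the Bruhat bound by specializing this implication at a single well-chosen index.

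In the weight-zero setting, the sequence $\alpha$ is determined by $A:=\alpha_+=\{a_1<\cdots<a_m\}$ via $\alpha=\bar A\cup A$. First I would work out the rows of the partition $\lambda$ of $\alpha$ by counting horizontal steps in the lattice path $[\alpha]$: the $j$-th vertical step (in path order) occurs at position $2m-a_{m-j+1}+1$ for $j\leq m$ and at position $2m+a_{j-m}$ for $j>m$. Counting the preceding horizontal steps yields
\begin{eqnarray*}
\lambda_i &=& a_{m-i+1}+i-1 \qquad\text{for } i=1,\dotsc,m,\\
\lambda_i &=& i-a_{i-m} \qquad\text{for } i=m+1,\dotsc,2m.
\end{eqnarray*}

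Next I would reformulate the Northeast condition $\lambda_-^t\subseteq\lambda_+$ in terms of these row lengths. Using the identification $(i,j)\in\lambda\iff i<\lambda_{j+1}$, a short unwinding of the definitions of $\lambda_+$ and $\lambda_-^t$ shows that the Northeast condition is equivalent to
\[
\lambda_{i+1}>j \implies \lambda_{j+1}>i \qquad \text{for all } i\geq j\geq 0.
\]

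The heart of the argument is to apply this implication at the index pair $(i,j)=(m+\ell-1,\,m-\ell)$ for each $\ell\in\{1,\dotsc,m\}$. The two formulas for $\lambda_i$ give $\lambda_{i+1}=m+\ell-a_\ell$ and $\lambda_{j+1}=a_\ell+m-\ell$, so the hypothesis $\lambda_{i+1}>j$ reads $a_\ell<2\ell$ while the conclusion $\lambda_{j+1}>i$ reads $a_\ell\geq 2\ell$. These are incompatible, forcing $a_\ell\geq 2\ell$ for every $\ell\in[m]$, which is exactly $\alpha_+\geq 24\cdots(2m)$ in the Bruhat order on $\binom{[2m]}{m}$. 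The final two sentences of the proposition are then immediate: $a_1\geq 2$ excludes $1$ from $\alpha_+$, and $a_m\geq 2m$ combined with $a_m\leq 2m$ forces $2m\in\alpha_+$.

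The one nontrivial obstacle is the bookkeeping in the first step, relating a weight-zero sequence $\alpha$ to the row lengths of its associated partition. The weight-zero hypothesis keeps this clean, since it forces the path through the centre $(m,m)$, and consequently the two halves of the path contribute to disjoint ranges of rows of $\lambda$; once the row formulas are in hand, the Northeast implication at the single index pair above closes out the proposition in one line.
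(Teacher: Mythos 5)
Your argument is correct, and it takes a genuinely different route from the paper. The paper's proof is a two-line appeal to standard Young tableaux: it asserts that the condition $\alpha_+\geq 24\cdots(2m)$ is equivalent to the $2\times m$ tableau with top row $[2m]\setminus\alpha_+$ and bottom row $\alpha_+$ being standard, and that this in turn is equivalent to $\alpha$ being Northeast, with neither equivalence spelled out. Your proof instead computes the row lengths of the associated partition directly from the lattice path, recasts the Northeast condition $\lambda_-^t\subseteq\lambda_+$ as the implication $\lambda_{i+1}>j\Rightarrow\lambda_{j+1}>i$ for $i\geq j$, and then extracts $a_\ell\geq 2\ell$ by the clever specialization $(i,j)=(m{+}\ell{-}1,\,m{-}\ell)$, which makes the hypothesis and conclusion of the implication directly contradictory unless $a_\ell\geq 2\ell$. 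I checked the row formulas $\lambda_i=a_{m-i+1}+i-1$ ($i\leq m$) and $\lambda_i=i-a_{i-m}$ ($i>m$) against the paper's running example $\bar4\bar224\mapsto(4,3,1,0)$, and the reformulated Northeast criterion against the paper's classification of the weight-zero sequences for $m=2$; both hold. What the paper's approach buys is brevity and a bridge to classical tableau combinatorics; what yours buys is a self-contained, verifiable derivation that does not lean on an unproven equivalence. One small remark: your argument as written gives only the forward implication (Northeast $\Rightarrow$ $\alpha_+\geq 24\cdots(2m)$), which is all the proposition states, whereas the paper's sketch is phrased as a biconditional; if you wanted the converse you would need to verify the Northeast implication at every pair $(i,j)$ with $i\geq j$, not just the special ones.
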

\begin{proof}
$\alpha_+\geq{24\dotsb(2m)}$ if and only if
the tableau of shape $(m^2)$ whose first
row is filled with the sequence
$(\alpha^t)_+=[n]\setminus\alpha_+$ and whose
second row is filled with the $\alpha_+$ is
standard.  This is equivalent to $\alpha$ being
Northeast.
\end{proof}

It follows from Proposition~\ref{fiber.prop}
that the set ${\mathcal{NE}}$ of
Northeast sequences indexing vectors
of weight zero
has cardinality equal to
the dimension of the
zero-weight space of the representation
${\rm L}(\omega_{2m})^*$.  This weight space is
the cokernel of the map
\[
\Omega\wedge\bullet:
  \Bigl(\bigwedge^{2m{-}2}{\C^{4m}}\Bigr)_0^*
   \longrightarrow\Bigl(\bigwedge^{2m}{\C^{4m}}\Bigr)_0^*\,\,.
\]
Similarly, the weight space ${\rm L}(\omega_{2m})_0$
is the kernel of the dual map
\[
\Omega\contr\bullet:
  \Bigl(\bigwedge^{2m}{\C^{4m}}\Bigr)_0
   \longrightarrow\Bigl(\bigwedge^{2m{-}2}{\C^{4m}}\Bigr)_0\,\,.
\]

We fix the positive integer $m$, and
consider only the positive subsequence
$\alpha_+$ of the sequence
$\alpha\in\angbin{2m}{2m}$.
When the weight of $p_{\alpha}$ is $0$,
$\alpha_+$ is an element of $\binom{[2m]}{m}$.
For $\alpha\in\binom{[2m]}{m}$, we call
a bijection $M:\alpha\rightarrow\alpha^c$
a {\it matching} of $\alpha$.
Fixing a matching $M:\alpha\to\alpha^c$,
we have an element of
the kernel ${\rm L}(\omega_{2m})$, as follows.
Let $H_{\alpha}$ be the set of
all sequences in $\binom{[2m]}{m}$
obtained by interchanging $M(\alpha_i)$
and $\alpha_i$, for $i\in I$, $I\subseteq[m]$.

Elements of the set $H_{\alpha}$ are the
vertices of an $m$-dimensional hypercube, whose
edges connect pairs of sequences
which are related by the interchange
of a single element.  Equivalently,
a pair of sequences are connected
by an edge if they share a subsequence
of size $m{-}1$.  For any such
subsequence $\beta\subseteq\alpha$
there exists a unique edge of $H_{\alpha}$
connecting the two vertices which
share the subsequence $\beta$.
Let $I\cdot\alpha$
denote the element of $H_{\alpha}$
obtained from $\alpha$ by the
interchange of $M(\alpha_i)$
and $\alpha_i$ for $i\in I$.
The element
\[
 K_{\alpha}:=\sum_{I\subseteq[m]}(-1)^{|I|}v_{I\cdot\alpha}
\]
lies in the kernel ${\rm L}(\omega_{2m})$.  Indeed,
for each $I\subseteq[m]$, we have
\[
\Omega\contr v_{I\cdot\alpha}=
 \sum_{i=1}^mv_{(I\cdot\alpha)\setminus\{(I\cdot\alpha)_i\}}.
\]
For each term 
$v_{(I\cdot\alpha)\setminus\{(I\cdot\alpha)_i\}}$
on the right-hand side, let
$j\in[m]$ be such that
either $(I\cdot\alpha)_i=\alpha_j$
or $(I\cdot\alpha)_i=\alpha_j^c$.
Set
\begin{eqnarray*}
J &:=&
\left\{\begin{array}{cc}
I\cup\{j\}, & {\text if } (I\cdot\alpha)_i=\alpha_j,\\
I\setminus\{j\}, & {\text if } (I\cdot\alpha)_i=\alpha_j^c.
\end{array}\right.
\end{eqnarray*}
The set $J$ is the
unique subset of $[m]$ such that
$(I\cdot\alpha)\setminus\{(I\cdot\alpha)_i\}$
is in the support of 
$\Omega\contr v_{J\cdot\alpha}$,
with coefficient $(-1)^{|J|}=(-1)^{|I|{+}1}$.
Hence these terms cancel
in $K_{\alpha}$, and we
see that the coefficient of each
$v_{\beta}$ ($\beta\in\binom{[2m]}{m-1}$)
in the support of
$\Omega\contr K_{\alpha}$ is zero.
Therefore $\Omega\contr K_{\alpha}=0$.
See Example~\ref{lg4.ex} for the
case $m=2$.

If $\alpha\in{\mathcal{NE}}$ then that there
exists a {\it descending matching},
that is, $M(\alpha_i)<\alpha_i$ for
all $i\in[m]$.
For example, the condition that the
matching $M(\alpha_i):=\alpha_i^c$ be
descending is equivalent to the condition
that $\alpha$ be Northeast.
If we choose a descending matching for each
$\alpha\in{\mathcal{NE}}$, the
element $K_{\alpha}\in{\rm L}(\omega_{2m})$
is supported on sequences which
precede $\alpha$ in the poset $\binom{[2m]}{m}$.
It follows that the set
${\mathcal B}:=
 \{K_{\alpha}\in{\rm L}(\omega_{2m})\mid\alpha\in{\mathcal{NE}}\}$
is a basis for ${\rm L}(\omega_{2m})$.

\begin{lemma}\label{basis.lemma}
The Pl\"ucker coordinates
$p_{\alpha}$ with $\alpha\in{\mathcal{NE}}$
are a basis for ${\rm L}(\omega_{2m})^*$.
\end{lemma}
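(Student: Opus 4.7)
The plan is to prove the lemma by dualizing the basis $\mathcal{B}=\{K_\alpha\mid\alpha\in{\mathcal{NE}}\}$ of ${\rm L}(\omega_{2m})$ constructed immediately above. Since the cardinality $|{\mathcal{NE}}|$ matches the dimension of each weight space of ${\rm L}(\omega_{2m})^*$ (the zero-weight case being computed explicitly, and an arbitrary weight reducing to it via the substitution $\alpha\mapsto\alpha\setminus\widetilde{\alpha}$ described earlier), it suffices to establish linear independence of the classes $\{p_\alpha\mid\alpha\in{\mathcal{NE}}\}$ weight space by weight space.

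The key step is to analyze the natural pairing between ${\rm L}(\omega_{2m})^*$ and ${\rm L}(\omega_{2m})$ via the square matrix $M=(p_\alpha(K_\beta))_{\alpha,\beta\in{\mathcal{NE}}}$. This pairing descends to the quotient because each $K_\beta$ annihilates $\Omega\wedge\bigwedge^{2m{-}2}{\C^{4m}}^*$, being in the kernel of the adjoint contraction $\Omega\contr\bullet$. Expanding $K_\beta=\sum_{I\subseteq[m]}(-1)^{|I|}v_{I\cdot\beta}$, one sees that $M_{\alpha,\beta}=\pm(-1)^{|I|}$ exactly when $\alpha=I\cdot\beta$ for some $I\subseteq[m]$ (which is unique when it exists), and $M_{\alpha,\beta}=0$ otherwise.

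I would then order ${\mathcal{NE}}$ by any linear extension of the Bruhat order on $\binom{[2m]}{m}$. The combinatorial core of the argument is that replacing any element $\beta_i$ by the strictly smaller element $M(\beta_i)\in\beta^c$ yields a subset that is Bruhat-below $\beta$; iterating over $i\in I$ gives $I\cdot\beta\leq\beta$ for every $I\subseteq[m]$, with equality only when $I=\emptyset$. Consequently $M_{\alpha,\beta}=0$ whenever $\alpha>\beta$, while $M_{\beta,\beta}=\pm 1$, so $M$ is triangular with $\pm 1$ on the diagonal and therefore invertible. Invertibility forces the desired linear independence, completing the proof.

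The main obstacle is the triangularity claim itself, which reduces to verifying that a single swap $\beta_i\mapsto M(\beta_i)$ with $M(\beta_i)<\beta_i$ lowers the sorted tuple coordinatewise; a short case analysis on where the new element lands in the sorted order handles this. Everything else is either already in place or routine linear algebra.
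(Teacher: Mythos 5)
Your proposal is correct and is essentially the paper's own argument, merely repackaged: the paper proves linear independence by induction up the Bruhat order on $\mathcal{NE}$ using that $\ell(K_\alpha)=c_\alpha$ once lower coefficients vanish, which is exactly the statement that the pairing matrix $\bigl(p_\alpha(K_\beta)\bigr)$ is triangular with $\pm 1$ on the diagonal in any linear extension of Bruhat order. Both rely on the same combinatorial fact (already asserted just before the lemma in the paper) that a descending matching forces $I\cdot\beta\leq\beta$, and both appeal to the cardinality count $|\mathcal{NE}|=\dim{\rm L}(\omega_{2m})^*_0$.
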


\begin{proof}
Fix a basis ${\mathcal B}$
of ${\rm L}(\omega_{2m})$ obtained from
descending matchings of each
Northeast sequence.
We use this basis to show that
the set of Pl\"ucker coordinates
$p_{\alpha}$ such that $\alpha$
is Northeast is a basis for the
dual ${\rm L}(\omega_{2m})^*$.

Suppose not.  Then there exists
a linear form
$\ell=\sum_{\alpha\in{\mathcal{NE}}}c_{\alpha}p_{\alpha}$
vanishing on each element of
the basis ${\mathcal B}$.  We show
by induction on the poset ${\mathcal{NE}}$
that all of the coefficients
$c_{\alpha}$ appearing
in this form vanish.

Fix a Northeast sequence
$\alpha\in{\mathcal{NE}}$, and
assume that $c_{\beta}=0$
for all Northeast $\beta<\alpha$.
Since $K_{\alpha}$ involves only
the basis vectors $v_{\beta}$ with
$\beta\leq\alpha$, we have
$\ell(K_{\alpha})=c_{\alpha}$,
hence $c_{\alpha}=0$.  This
completes the inductive step
of the proof.

The initial step of the induction
is simply the inductive step applied
to the unique minimal Northeast
sequence $\alpha=24\dotsb(2m)$.
\end{proof}

It follows that every Pl\"ucker
coordinate $p_{\alpha}$ indexed
by a non-Northeast sequence
$\alpha$
can be written uniquely as a
linear combination of Pl\"ucker
coordinates indexed by Northeast
sequences.
We can be more precise about the 
form of these linear combinations.
Recall that each fiber of the map
$\pi_{2m}$ contains a unique
Northeast sequence.  For a
sequence $\alpha_0$, let
$\alpha$ be the Northeast
sequence in the same fiber
as $\alpha_0$.

\begin{lemma}\label{smaller.lemma}
For each non-Northeast sequence
$\alpha_0$, let $\ell'_{\alpha_0}$ be
the linear relation among the
Pl\"ucker coordinates expressing
$p_{\alpha_0}$ as a linear combination
of the $p_{\beta}$ with $\beta$
Northeast.  Then $p_{\alpha}$
appears in $\ell'_{\alpha_0}$ with
coefficient $(-1)^{|I|}$, where
$\alpha=I\cdot\alpha_0$, and every
other Northeast $\beta$ with
$p_{\beta}$ in the support
of $\ell'_{\alpha_0}$ satisfies
$\beta>\alpha$.
\end{lemma}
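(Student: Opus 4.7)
My plan is to evaluate the desired relation $\ell'_{\alpha_0} = 0$ against the basis $\{K_\beta : \beta \in \mathcal{NE}\}$ of ${\rm L}(\omega_{2m})_0$ from the preceding discussion, constructing each $K_\beta$ via the canonical descending matching $M_\beta(\beta_i) := (\beta^c)_i$ (well-defined and descending precisely because $\beta$ is Northeast). Using Proposition~\ref{fiber.prop}, I first identify the Northeast $\alpha$ in the $\pi_n$-fiber of $\alpha_0$ and the subset $I \subseteq [m]$ with $\alpha_0 = I \cdot \alpha$ under this matching.

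The key structural fact is that the pairing matrix $(p_\gamma(K_\beta))_{\gamma,\beta \in \mathcal{NE}}$ is upper-triangular with $1$'s on the diagonal, with respect to any linear extension of Bruhat order on $\mathcal{NE}$. This reduces to showing $I' \cdot \beta \leq \beta$ in Bruhat order for every $I' \subseteq [m]$, which I plan to verify by a counting argument on sorted positions: since each replaced entry $\beta_j$ with $j \in I'$ is sent to $(\beta^c)_j < \beta_j$, at least $i$ of the entries of $I' \cdot \beta$ remain $\leq \beta_i$ for every $i$. Consequently $K_\beta$ is supported on $H_\beta \subseteq \{\delta : \delta \leq \beta\}$, and $p_\gamma(K_\beta)$ vanishes whenever $\gamma \not\leq \beta$. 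Since $\ell'_{\alpha_0}$ annihilates every $K_\beta$, the equations $p_{\alpha_0}(K_\beta) = \sum_\gamma c_\gamma\, p_\gamma(K_\beta)$ may be solved uniquely by forward substitution in increasing Bruhat order, with input data $p_{\alpha_0}(K_\beta) = (-1)^{|I_\beta|}$ when $\alpha_0 = I_\beta \cdot \beta \in H_\beta$ and $0$ otherwise.

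The main obstacle I expect is the combinatorial claim that $\alpha$ is the Bruhat minimum of $\{\beta \in \mathcal{NE} : \alpha_0 \in H_\beta\}$; equivalently, that every Northeast $\beta$ with $\alpha_0 \in H_\beta$ satisfies $\beta \geq \alpha$. The plan for this is to invoke Proposition~\ref{fiber.prop}: the Northeast $\alpha$ in the fiber is characterized as taking all connected components of $\Pi_+$, so any Northeast $\beta$ whose hypercube meets $\alpha_0$ must dominate this minimum. Granting this, the forward substitution yields $c_\beta = 0$ for every $\beta < \alpha$ and every $\beta$ incomparable to $\alpha$ (inductively, since $p_{\alpha_0}(K_\beta) = 0$ in these cases by the minimality claim, while the correction terms $\sum_{\gamma < \beta} p_\gamma(K_\beta)\, c_\gamma$ vanish using the induction hypothesis together with $p_\alpha(K_\beta) = 0$ whenever $\alpha \not\leq \beta$). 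The equation at $\beta = \alpha$ then reads $c_\alpha = p_{\alpha_0}(K_\alpha) = (-1)^{|I|}$, and all remaining nonzero $c_\beta$ must satisfy $\beta > \alpha$, as required.
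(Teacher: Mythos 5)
Your strategy mirrors the paper's: evaluate $\ell'_{\alpha_0}$ against the basis $\{K_\beta : \beta\in\mathcal{NE}\}$ and solve the resulting triangular system by forward substitution in the Bruhat order on positive parts. Tracking the input terms $p_{\alpha_0}(K_\beta)$ explicitly, as you do, is essential; these terms are \emph{not} all zero for $\beta>\alpha$ (for $m=3$ take $\alpha_0$ with positive part $145$, so $\alpha_+ = 246$, and observe that $145\in H_{256}$, so $p_{\alpha_0}(K_{256})\neq 0$). The paper's displayed formula $\ell(K_\beta)=\bigl(\sum_{\gamma\in S}c_\gamma\bigr)+c_\beta$ appears to omit these contributions, so your more careful bookkeeping is the right move, and the triangularity claim and the forward substitution are sound.

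The genuine gap is the claim you single out as the main obstacle: that $\alpha_0\in H_\beta$ for Northeast $\beta$ forces $\beta_+\geq\alpha_+$. You propose to ``invoke Proposition~\ref{fiber.prop},'' but that proposition alone does not give it: the hypercube $H_\beta$ is built from the pairs $\{\beta_i,\beta_i^c\}$, which are in general different from the pairs defining the $\pi_n$-fiber of $\alpha_0$, so $\alpha_0\in H_\beta$ says nothing directly about the fiber containing $\alpha_0$. The missing ingredient is the fact that the Northeast representative $\alpha$ of a weight-zero $\alpha_0$ satisfies $\alpha_+=\bigl(\max\bigl((\alpha_0)_i,(\alpha_0)^c_i\bigr)\bigr)_i$, the componentwise maximum of the two sorted halves of $[2m]$; this follows by combining the Northeast condition $\alpha_i>\alpha_i^c$ with the description of $\pi_n$ in terms of componentwise min and max. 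Granting it, the same position-counting argument you already use for triangularity shows that whenever $\alpha_0\in H_\beta$ both $(\alpha_0)_+$ and its complement lie componentwise $\leq\beta_+$, hence $\alpha_+\leq\beta_+$. Without this step the forward substitution does not establish $c_\beta=0$ for $\beta$ incomparable to $\alpha$, so it needs to be filled in for the proof to be complete.
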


\begin{proof}
Let $M$ be the descending matching
of $\alpha$ with $\alpha^c$
defined by
$M(\alpha_i):=\alpha_i^c$.
Let $K_{\alpha}$ be the
kernel element obtained by
the process described above.
Any linear form
\[
\ell=
 p_{\alpha_0}+(-1)^{|I|{+}1}p_{\alpha}
  +\sum_{\alpha<\beta\in{\mathcal{NE}}}c_{\beta}p_{\beta}
\]
vanishes on $K_{\alpha}$.

We extend this relation to one
which vanishes on all of ${\rm L}(\omega_n)_0$,
proceeding inductively on the poset
of Northeast sequences greater than
or equal to $\alpha$.  Suppose
that $\beta>\alpha$ is Northeast.
By induction, suppose that for each
Northeast sequence
$\gamma$ in the interval
$[\alpha,\beta]$ the coefficient
$c_{\gamma}$ of $\ell$
has been determined in such a
way that $\ell(K_{\gamma})=0$.

Let $S$ be the set of Northeast
sequences $\gamma$ in the open interval
$(\alpha,\beta)$ such that
$v_{\gamma}$ appears in $K_{\beta}$.
Then
\[
\ell(K_{\beta})=\Bigl(\sum_{\gamma\in S}c_{\gamma}\Bigr)+c_{\beta},
\]
so setting $c_{\beta}:=-\sum_{\gamma\in S}c_{\gamma}$
implies that $\ell(K_{\beta})=0$.  

This completes the inductive part of the proof.
We now have a linear form
$\ell$ vanishing on ${\rm L}(\omega_n)_0$
which expresses $p_{\alpha_0}$
as a linear combination
of Pl\"ucker coordinates indexed by Northeast
sequences.  Since such a linear form is unique,
$\ell=\ell'_{\alpha_0}$.
\end{proof}

By Lemmas~\ref{basis.lemma} and~\ref{smaller.lemma}
and the argument preceding them, we deduce the following
theorem.

\begin{thm}\label{normalform.thm}
The system of linear relations
\[
 \bigl\{\ell_{\alpha}^{(a)}=u^av^{d{-}a}
  \otimes\Omega\wedge p_{\alpha}~\bigl\lvert\bigr.~
   a=0,\dotsc,d,~\alpha\in\tangbin{n}{n{-}2}\bigr\}
\]
has a reduced normal form consisting of
linear forms expressing each Pl\"ucker
coordinate $p_{\beta}^{(b)}$ with
$\beta\not\in{\mathcal{NE}}\subseteq\angbin{n}{n}$
as a linear combination of Pl\"ucker
coordinates indexed by Northeast
elements of $\angbin{n}{n}$.
\end{thm}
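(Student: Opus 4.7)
The statement essentially packages the two preceding lemmas across levels and weight spaces, so my plan is to show that those lemmas assemble into a reduced normal form.

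First I would observe that the linear forms $\ell_{\alpha}^{(a)} = u^{a}v^{d-a}\otimes(\Omega\wedge p_{\alpha})$ factor through the scalar $u^{a}v^{d-a}$. Hence the problem decouples over the index $a\in\{0,\dotsc,d\}$: for each fixed $a$, the relations lie entirely in the summand $u^{a}v^{d-a}\otimes\bigwedge^{n}{\C^{2n}}^{*}$ and are, up to that common factor, exactly the relations defining ${\rm L}(\omega_{n})^{*}$ as a quotient of $\bigwedge^{n}{\C^{2n}}^{*}$. So it suffices to prove the reduced normal form statement for the linear span $L_{n}$, and then tensor with $u^{a}v^{d-a}$ to recover the claim at each level.

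Next, I would decompose $L_{n}$ by $T$-weight and reduce to the zero-weight case. As noted in the discussion preceding Lemma~\ref{basis.lemma}, for each $\alpha\in\tbinom{\langle n\rangle}{n-2}$, the generator $\ell_{\alpha}=\Omega\wedge p_{\alpha}$ lies in the weight space $(L_{n})_{\omega_{\alpha}}$, and the structure of $(L_{n})_{\omega_{\alpha}}$ mirrors that of $(L_{n-|\widetilde{\alpha}|})_{0}$ up to relabeling of indices and signs coming from sorting. These signs do not affect linear independence or the form of the reduced expressions, so the problem in an arbitrary weight space is combinatorially the same as the zero-weight problem on a smaller Lagrangian Grassmannian.

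Now I would invoke Lemmas~\ref{basis.lemma} and~\ref{smaller.lemma}. Lemma~\ref{basis.lemma} tells us that the Pl\"ucker coordinates $p_{\alpha}$ with $\alpha$ Northeast form a $\C$-basis of ${\rm L}(\omega_{n})^{*}$, so that under the quotient map $\bigwedge^{n}{\C^{2n}}^{*}\twoheadrightarrow{\rm L}(\omega_{n})^{*}$, every non-Northeast Pl\"ucker coordinate is a unique $\C$-linear combination of Northeast ones. Lemma~\ref{smaller.lemma} supplies that unique expression $\ell'_{\alpha_{0}}$ explicitly: for each non-Northeast $\alpha_{0}$, the relation $\ell'_{\alpha_{0}}$ rewrites $p_{\alpha_{0}}$ purely in terms of Northeast coordinates, with the Northeast representative of the fiber $\pi_{n}^{-1}(\pi_{n}(\alpha_{0}))$ appearing with coefficient $(-1)^{|I|}$ and all other Northeast coordinates in the support indexed by sequences strictly greater than that representative. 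Tensoring each such $\ell'_{\alpha_{0}}$ with $u^{a}v^{d-a}$ produces, for every non-Northeast $\beta\in\tbinom{\langle n\rangle}{n}$ and every $a\in\{0,\dotsc,d\}$, a linear form that expresses $p_{\beta}^{(a)}$ as a $\C$-linear combination of $\{p_{\gamma}^{(a)}:\gamma\in\mathcal{NE}\}$.

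To conclude that this collection is the reduced normal form of the system $\{\ell_{\alpha}^{(a)}\}$, I would verify two things. First, these relations are in the $\C$-span of the $\ell_{\alpha}^{(a)}$, since they come from kernel elements of $\Omega\contr\bullet$. Second, no nontrivial $\C$-linear combination of them reduces to a relation among Northeast coordinates alone; this is immediate from Lemma~\ref{basis.lemma}, since any such combination would produce a dependence among a basis. The only step requiring real work has already been carried out in Lemma~\ref{smaller.lemma}, whose inductive construction over the poset of Northeast sequences $\geq\alpha$ is the genuine content; the present theorem is its tensor-product and weight-space packaging, and the main thing to be careful about is tracking the bookkeeping across weight spaces so that the final statement covers every non-Northeast $\beta\in\tbinom{\langle n\rangle}{n}$ uniformly.
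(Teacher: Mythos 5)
Your proposal is correct and follows essentially the same route as the paper's proof, which is quite terse and relies on the decomposition by weight spaces together with Lemmas~\ref{basis.lemma} and~\ref{smaller.lemma}; your write-up supplies the same structure (decoupling over the index $a$, weight-space reduction, invoking the two lemmas, then assembling) in more explicit detail.
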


\begin{proof}
We have seen that the linear relations
preserve weight spaces, and
Lemmas~\ref{basis.lemma} and~\ref{smaller.lemma}
provide the required normal form on each of these.
The union of the relations constitute a normal
form for the linear relations generating the
entire linear subspace $L_{d,n}$.
\end{proof}

\begin{ex}\label{lg4.ex}
Consider the zero weight space
$\left(\bigwedge^4\C^8\right)_0$ (so that $m=2$).
This is spanned by the vectors
$v_{\alpha}:=
 e_{\alpha_1}\wedge e_{\alpha_2}\wedge
 e_{\alpha_3}\wedge e_{\alpha_4}$
(with dual basis the Pl\"ucker
coordinates $p_{\alpha}=v_{\alpha}^*$),
where
$\alpha\in\{\bar4\bar334,\bar4\bar224,\bar4\bar114,
            \bar3\bar223,\bar3\bar113,\bar2\bar112\}$.
The Northeast sequences are $\bar4\bar334$ and
$\bar4\bar224$.
The kernel of
$\Omega\contr\bullet:
 \left(\bigwedge^4\C^8\right)_0\rightarrow
 \left(\bigwedge^2\C^8\right)_0$
is spanned by the vectors
\[
K_{\bar4\bar224}=
v_{\bar4\bar224}-v_{\bar4\bar114}-v_{\bar3\bar223}+v_{\bar3\bar113}
\]
and
\[
K_{\bar4\bar334}=
v_{\bar4\bar334}-v_{\bar4\bar114}-v_{\bar3\bar223}+v_{\bar2\bar112}.
\]
To see this concretely, we compute:
\begin{eqnarray*}
\Omega\contr K_{\bar4\bar224}
&=&
 v_{\bar44}+v_{\bar22}-v_{\bar44}-v_{\bar11}
-v_{\bar33}-v_{\bar22}+v_{\bar33}+v_{\bar11}\\
&=&0,
\end{eqnarray*}
and similarly $\Omega\contr K_{\bar4\bar334}=0$.
The fibers of the map
$
\pi_4:\tangbin{4}{4}\rightarrow{\mathcal D}_4
$
are
\[
\pi_4^{-1}(\bar4\bar312,\bar2\bar134)=
\{\bar4\bar334,\bar2\bar112\}
\]
and
\[
\pi_4^{-1}(\bar4\bar213,\bar3\bar124)=
\{\bar4\bar224,\bar4\bar114,\bar3\bar223,\bar3\bar113\}.
\]
The expression for $p_{\bar4\bar114}$ as a linear combination
of Pl\"ucker coordinates indexed by Northeast sequences is
\[
\ell_{\bar4\bar114}=p_{\bar4\bar114}+
c_{\bar4\bar224}p_{\bar4\bar224}+
c_{\bar4\bar334}p_{\bar4\bar334},
\]
for some
$c_{\bar4\bar224},c_{\bar4\bar334}\in\C$,
which we can compute as follows.  Since
$0=\ell_{\bar4\bar114}(K_{\bar4\bar224})=
c_{\bar4\bar224}-1$, we have $c_{\bar4\bar224}=1$.
Similarly,
$0=\ell_{\bar4\bar114}(K_{\bar4\bar334})=
c_{\bar4\bar334}-1$, so $c_{\bar4\bar334}=1$.
Hence $\ell_{\bar4\bar114}=p_{\bar4\bar114}+
p_{\bar4\bar224}+p_{\bar4\bar334}$,
which agrees with~(\ref{lg4bin}).
\end{ex}

\subsection{Proof of the straightening law}

We find generators of
$(I_{d,n}{+}L_{d,n})\cap\C[{\mathcal D}_{d,n}]$
which express the quotient as an algebra with
straightening law on ${\mathcal D}_{d,n}$.
Such a generating set is automatically
a Gr\"obner basis with respect to the degree
reverse lexicographic term order where
variables are ordered by a refinement of the
doset order.  We begin with
a Gr\"obner basis $G_{I_{d,n}{+}L_{d,n}}$
for $I_{d,n}{+}L_{d,n}$
with respect to a similar term order.
For $\alpha^{(a)}\in\angbin{n}{n}_d$, write
$\check{\alpha}^{(a)}:=\alpha^{(a)}\vee(\alpha^t)^{(a)}$
and $\hat{\alpha}^{(a)}:=\alpha^{(a)}\wedge(\alpha^t)^{(a)}$,
so that
$\pi_n(\alpha^{(a)})=(\hat{\alpha}^{(a)},\check{\alpha}^{(a)})$.
We call an element $\alpha^{(a)}\in\angbin{n}{n}_d$
{\it Northeast} if $\alpha\in\angbin{n}{n}$
is Northeast.

Let $<$ be a linear refinement of the partial order
on ${\mathcal P}_{d,n}$ satisfying the
following conditions.  First, the Northeast
sequence is minimal among
those in a given fiber
of $\pi_n$.  This is possible
since every weight space is an antichain
({\it i.e.}, no two elements are comparable).
Second, $\alpha^{(a)}<\beta^{(b)}$ if
$(\hat{\alpha}^{(a)},\check{\alpha}^{(a)})$
is lexicographically smaller
than $(\hat{\beta}^{(b)},\check{\beta}^{(b)})$.

With respect to any such refinement, consider
the degree reverse lexicographic term order.
A reduced Gr\"{o}bner basis $G_{d,n}$ for
$I_{d,n}{+}L_{d,n}$ with respect to this
term order will have standard
monomials indexed by chains (in ${\mathcal P}_{d,n}$)
of Northeast partitions.  While every monomial supported
on a chain of Northeast partitions is standard modulo $I_{d,n}$,
this is not always the case modulo $I_{d,n}{+}L_{d,n}$.  In
other words, upon identifying each Northeast partition
appearing in a given monomial with an element of
${\mathcal D}_{d,n}$, we do not necessarily obtain a
monomial supported on a chain in ${\mathcal D}_{d,n}$.
It is thus necessary to identify precisely which Northeast
chains in $\angbin{n}{n}$ correspond to chains in
${\mathcal D}_{d,n}$ via the map $\pi_n$.

A monomial $p_{\alpha}^{(a)}p_{\beta}^{(b)}$
such that $\alpha^{(a)}<\beta^{(b)},(\beta^t)^{(b)}$
and $\alpha^{(a)}$, $\beta^{(b)}\in{\mathcal{NE}}$
cannot be reduced modulo $G_{I_{d,n}}$
or $G_{L_{d,n}}$.
On the other hand, if $\alpha^{(a)}<\beta^{(b)}$ (say),
but $\alpha^{(a)}$ and $(\beta^t)^{(b)}$
are incomparable (written
$\alpha^{(a)}\not\sim(\beta^t)^{(b)}$)
then there is a relation in 
$G_{I_{d,n}}$ with leading term
$p_{\alpha}^{(a)}p_{\beta^t}^{(b)}$.
It follows that the degree-two standard
monomials are indexed by Northeast
partitions $p_{\alpha}^{(a)}p_{\beta}^{(b)}$ with
$\alpha^{(a)}<\beta^{(b)},(\beta^t)^{(b)}$.

Conversely, any monomial
$p_{\alpha}^{(a)}p_{\beta}^{(b)}$ with
$\alpha^{(a)}<\beta^{(b)},(\beta^t)^{(b)}$
and $\alpha^{(a)}$, $\beta^{(b)}\in{\mathcal{NE}}$
cannot be the leading term of any
element of $G_{{I_{d,n}}{+}L_{d,n}}$.
To see this, observe that
$G_{{I_{d,n}}{+}L_{d,n}}$ is obtained
by Buchberger's algorithm~\cite{buc65} applied to
$G_{I_{d,n}}\cup G_{L_{d,n}}$, and
we may consider only
the $S$-polynomials
$S(f,g)$ with $f\in G_{I_{d,n}}$
and $g\in G_{L_{d,n}}$.  In this
case we may assume ${\rm in}_<g$ divides
${\rm in}_<f$.

Let $\alpha_0$ be the partition
such that ${\rm in}_<g=p_{\alpha_0}^{(a)}$
(that is, $g$ is the unique expression
of $p_{\alpha_0}^{(a)}$ as a linear
combination of Pl\"ucker coordinates
indexed by Northeast partitions),
and let $\alpha$ be the
unique Northeast partition such
that $\pi_n(\alpha_0)=\pi_n(\alpha)$.
By the reduced normal form given in
Theorem~\ref{normalform.thm},
$S(f,g)$ is the obtained by replacing
$p_{\alpha_0}^{(a)}$ with
$\pm p_{\alpha}^{(a)}+\ell$,
where $\ell$ is a linear combination
of Pl\"ucker coordinates
$p_{\gamma}^{(a)}$ with $\gamma$
Northeast and $\alpha_+<\gamma_+$.
This latter condition implies that
$\hat{\alpha}<\hat{\gamma}$
(also, $\check{\alpha}>\check{\gamma}$),
and therefore
$(\hat{\alpha},\check{\alpha})$
is lexicographically smaller
than $(\hat{\gamma},\check{\gamma})$.

Hence the standard monomials with
respect to the reduced Gr\"obner basis
$G_{{I_{d,n}}{+}L_{d,n}}$ are precisely
the monomials $p_{\alpha}^{(a)}p_{\beta}^{(b)}$ with
$\alpha^{(a)}<\beta^{(b)},(\beta^t)^{(b)}$
and $\alpha^{(a)}$, $\beta^{(b)}\in{\mathcal{NE}}$.

Recall that elements of the doset
${\mathcal D}_{d,n}$ are pairs
$(\alpha,\beta)$ of admissible
(Definition~\ref{invol.adm.def})
elements of $\angbin{n}{n}_d$ such
that (regarded as sequences):
\begin{itemize}
\item $\alpha<\beta$
\item $\alpha$ and $\beta$ have the same number
      of negative (or positive) elements
\end{itemize}
Equivalently, regarding $\alpha$ and
$\beta$ as partitions, the
elements of ${\mathcal D}_{d,n}$ are
pairs $(\alpha,\beta)$ of 
symmetric partitions such that
\begin{itemize}
\item $\alpha\subseteq\beta$,
\item $\alpha$ and $\beta$ have the same
Durfee square,
\end{itemize}
where the {\it Durfee square} of a
partition $\alpha$ is the largest square
subpartition $(p^p)\subseteq\alpha$ (for some
$p\leq n$).

\begin{thm}\label{asl.thm}
$\C[\angbin{n}{n}_d]/\langle I_{d,n}{+}L_{d,n}\rangle$ is an
algebra with straightening law on ${\mathcal D}_{d,n}$.
\end{thm}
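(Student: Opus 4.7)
The plan is to leverage the Gröbner-basis analysis already carried out in the paragraphs preceding the theorem, translating it from the language of Northeast Plücker variables into the language of the doset $\mathcal{D}_{d,n}$. The map doing this translation is $\pi_n$: for each Northeast element $\alpha^{(a)} \in \mathcal{P}_{d,n}$, identify the Plücker coordinate $p_\alpha^{(a)}$ (which by Theorem~\ref{normalform.thm} survives the elimination of $L_{d,n}$) with the doset generator $p_{(\hat{\alpha}^{(a)}, \check{\alpha}^{(a)})}$, where $\pi_n(\alpha^{(a)}) = (\hat{\alpha}^{(a)}, \check{\alpha}^{(a)})$. Lemma~\ref{basis.lemma} applied weight-space by weight-space shows this gives a bijection between Northeast Plücker coordinates and $\mathcal{D}_{d,n}$, and these generate the quotient, so condition (1) of Definition~\ref{asl.def} is immediate.

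The crucial combinatorial lemma I would establish next is the chain-translation principle: if $\alpha^{(a)}, \beta^{(b)} \in \mathcal{P}_{d,n}$ are Northeast, then $\alpha^{(a)} \leq \beta^{(b)}$ and $\alpha^{(a)} \leq (\beta^t)^{(b)}$ if and only if $\check{\alpha}^{(a)} \leq \hat{\beta}^{(b)}$ in $\mathcal{P}_{d,n}$, which is precisely the doset comparability of $\pi_n(\alpha^{(a)})$ and $\pi_n(\beta^{(b)})$ in $\mathcal{D}_{d,n}$. One direction: $\alpha \leq \hat{\beta} = \beta \wedge \beta^t$ splits as $\alpha \leq \beta$ and $\alpha \leq \beta^t$; by symmetry of $\hat{\beta}$, also $\alpha^t \leq \hat{\beta}$, hence $\check{\alpha} = \alpha \vee \alpha^t \leq \hat{\beta}$. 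Iterating this, a sequence of Northeast partitions satisfies the compatibility relation used to define the standard Plücker monomials (developed in the paragraphs preceding the statement) precisely when the image pairs form a chain in $\mathcal{D}_{d,n}$. Combined with the Gröbner basis description already obtained, this gives condition (2).

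For condition (3), I would argue as follows. Fix the degree reverse lexicographic term order coming from the linear refinement $<$ of $\mathcal{P}_{d,n}$ described before the theorem. Given a non-standard monomial $m$ in the doset variables, pull it back to a monomial $\tilde{m}$ in Northeast Plücker variables; reduce $\tilde{m}$ modulo the Gröbner basis $G_{I_{d,n}+L_{d,n}}$ and push the result back to the doset side via the identification above. Because the term order on the Plücker side refines the doset order in the sense that $\alpha^{(a)} < \beta^{(b)}$ implies $(\hat{\alpha}^{(a)}, \check{\alpha}^{(a)})$ is lexicographically $\leq (\hat{\beta}^{(b)}, \check{\beta}^{(b)})$, every standard monomial appearing in the reduction is lexicographically strictly smaller than the original sequence, as required.

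The main obstacle will be condition (4), since it is the one that most genuinely uses the doset rather than just the underlying poset: one must verify that for any four-chain $\alpha_1 \leq \alpha_2 \leq \alpha_3 \leq \alpha_4$ with two admissible pairings $\sigma$, the straightening of $p_{(\alpha_{\sigma(1)}, \alpha_{\sigma(2)})} p_{(\alpha_{\sigma(3)}, \alpha_{\sigma(4)})}$ produces $\pm p_{(\alpha_1,\alpha_2)} p_{(\alpha_3,\alpha_4)}$ as its lead term, plus a remainder of strictly smaller quadratic standard monomials. I would handle this by lifting each doset generator to its Northeast Plücker representative, applying the Plücker relation for $\Gr(n,2n)$ (Proposition on Plücker relations) to the incomparable pair that arises, and then reducing by $L_{d,n}$ using Theorem~\ref{normalform.thm}. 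The interval constraint on non-initial monomials in the Plücker relation, combined with the fact that $\pi_n$ is order-preserving on intervals and that $\pi_n(\alpha_0)$ coincides with $\pi_n(\alpha)$ for the unique Northeast representative $\alpha$ of a fiber, should force the reduced quadratic to split into the required lead term plus standard monomials supported on elements of $\mathcal{D}_{d,n}$ that are lexicographically below $(\alpha_1,\alpha_2,\alpha_3,\alpha_4)$. The signs $\pm$ are precisely the $\sigma_\alpha$ of Definition~\ref{invol.adm.def} that have already been tracked throughout the Lagrangian involution calculation.
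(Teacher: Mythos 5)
Your strategy matches the paper's: conditions (1) and (2) via the Gröbner basis analysis and the $\pi_n$-dictionary between Northeast sequences and doset elements, condition (3) via the lexicographic refinement and the normal form of Lemma~\ref{smaller.lemma}, and condition (4) via lifting to Northeast representatives, applying a classical Plücker relation to the incomparable pair $\alpha\not\sim\beta^t$, and reducing modulo $L_{d,n}$. Your ``chain-translation principle'' ($\alpha<\beta$ and $\alpha<\beta^t$ iff $\check\alpha\le\hat\beta$, for Northeast $\alpha,\beta$) is a correct and clean packaging of what the paper handles implicitly in the discussion preceding the theorem.

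Where your argument thins out is condition (4). You assert that the interval constraint on the non-initial terms of the Plücker relation, ``combined with the fact that $\pi_n$ is order-preserving on intervals,'' forces the lead term of the reduced quadratic to be $\pm p_{(\alpha_1,\alpha_2)}p_{(\alpha_3,\alpha_4)}$. Order-preservation of $\pi_n$ does not by itself determine the doset images of $\alpha\wedge\beta^t$ and $\alpha\vee\beta^t$. One must actually compute, for example, that $(\alpha\wedge\beta^t)\wedge(\alpha^t\wedge\beta)=\hat\alpha$ and $(\alpha\wedge\beta^t)\vee(\alpha^t\wedge\beta)=\hat\beta$ under the interleaving $\hat\alpha<\hat\beta<\check\alpha<\check\beta$ --- and the second identity is \emph{not} a formal consequence of distributivity alone; it uses the precise relative order of the positive and negative parts of $\alpha$ and $\beta^t$ coming from the chain hypothesis. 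Moreover the hypothesis of (4) splits into two genuinely different interleavings ($\hat\alpha<\hat\beta<\check\alpha<\check\beta$ versus $\hat\alpha<\hat\beta<\check\beta<\check\alpha$), yielding lead terms $p_{(\hat\alpha,\hat\beta)}p_{(\check\alpha,\check\beta)}$ and $p_{(\hat\alpha,\hat\beta)}p_{(\check\beta,\check\alpha)}$ respectively; your proposal does not distinguish these cases. To make the argument complete you need the explicit distributive-lattice computation of $\pi_n(\alpha\wedge\beta^t)$ and $\pi_n(\alpha\vee\beta^t)$ in each case, as the paper carries out.
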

\begin{proof}
Since standard monomials with respect to
a Gr\"{o}bner basis are linearly independent,
the arguments above
establish the first two conditions
in Definition~\ref{asl.def}.

To establish the third condition, note that it
suffices to consider the expression for a
degree-$2$ monomial as a sum of standard monomials.
For simplicity, we absorb the superscripts into
our notation and write $\alpha\in\angbin{n}{n}_d$
and similarly for the corresponding Pl\"ucker
coordinate.
Let
\begin{eqnarray}\label{redexpr.eqn}
p_{(\hat{\alpha},\check{\alpha})}p_{(\hat{\beta},\check{\beta})}
&=&
\sum_{j=1}^kc_j
 p_{(\hat{\alpha}_{j},\check{\alpha}_{j})}
 p_{(\hat{\beta}_{j},\check{\beta}_{j})}
\end{eqnarray}
be a reduced expression in $G_{I_{d,n}{+}L_{d,n}}$
for
$p_{(\hat{\alpha},\check{\alpha})}
p_{(\hat{\beta},\check{\beta})}$
as a sum of standard monomials.
That is,
$p_{(\hat{\alpha},\check{\alpha})}
p_{(\hat{\beta},\check{\beta})}$
is non-standard and
$p_{(\hat{\alpha}_j,\check{\alpha}_j)}
p_{(\hat{\beta}_j,\check{\beta}_j)}$
is standard for $j=1,\dotsc,k$.
We assume that
$\alpha$ (respectively, $\beta$)
be the unique Northeast
partition such that
$\pi_n(\alpha)=(\hat{\alpha},\check{\alpha})$
(respectively, $\pi_n(\beta)=(\hat{\beta},\check{\beta})$),
and similarly for each $\alpha_j$ and
$\beta_j$ appearing in~(\ref{redexpr.eqn}).

Fix $j=1,\dotsc,k$.  The standard monomial
$p_{(\hat{\alpha}_j,\check{\alpha}_j)}
p_{(\hat{\beta}_j,\check{\beta}_j)}$
is obtained by the reduction modulo $G_{L_{d,n}}$
of a standard monomial $p_{\gamma}p_{\delta}$
appearing in the straightening relation for
$p_{\alpha}p_{\beta}$, which is an element of the
Gr\"obner basis $G_{I_{d,n}}$.
If $\gamma$ and $\delta$ are both
Northeast, then nothing happens, {\it i.e.},
$\gamma=\alpha_j$ and $\delta=\beta_j$.
If $\gamma$ is not Northeast, then
we rewrite $p_{\gamma}$ as a linear combination
of Pl\"ucker coordinates indexed by Northeast
sequences.  Lemma~\ref{smaller.lemma}
ensures that the leading term of the new
expression is $p_{(\hat{\gamma},\check{\gamma})}$,
and the lower order terms
$p_{(\hat{\epsilon},\check{\epsilon})}$
satisfy $\hat{\epsilon}<\hat{\gamma}$.

It follows that the lexicographic
comparison in the third condition
of Definition~\ref{asl.def}
terminates with the first
Pl\"ucker coordinate.  That is, if
$(\hat{\alpha}_j\leq\check{\alpha}_j
\leq\hat{\beta}_j\leq\check{\beta}_j)$
is lexicographically smaller than
$(\hat{\alpha}\leq\check{\alpha}
\leq\hat{\beta}\leq\check{\beta})$,
then either $\hat{\alpha}_j<\hat{\alpha}$
or $\hat{\alpha}_j=\hat{\alpha}$
and $\hat{\alpha}_j<\hat{\alpha}$.
Therefore the reduction process
applied to $p_{\delta}$ does not
affect the result, and the third
condition is proven.

It remains to prove the fourth condition.
Suppose that
$(\hat{\alpha},\check{\alpha})$ and
$(\hat{\beta},\check{\beta})$
are incomparable elements of ${\mathcal D}_{d,n}$
($\alpha$ and $\beta$ Northeast).
This means that $\alpha$ is incomparable
to either $\beta$ or $\beta^t$
(possibly both).  Without loss
of generality, we will deal only
with the more complicated case that
$\alpha$ and $\beta^t$ are incomparable.
The hypothesis of the fourth
condition is that the set
$\{\hat{\alpha},\check{\alpha},\hat{\beta},\check{\beta}\}$
forms a chain in $\angbin{n}{n}_d$.
Up to interchanging the roles of
$\alpha$ and $\beta$, there are
two possible cases (see Figure~\ref{cases.fig}): either
$\hat{\alpha}<\hat{\beta}<\check{\alpha}<\check{\beta}$
or
$\hat{\alpha}<\hat{\beta}<\check{\beta}<\check{\alpha}$.
\begin{figure}[htb]
 \[
  \begin{picture}(300,130)(0,0)
   \put(40,20){\includegraphics[scale=0.7]{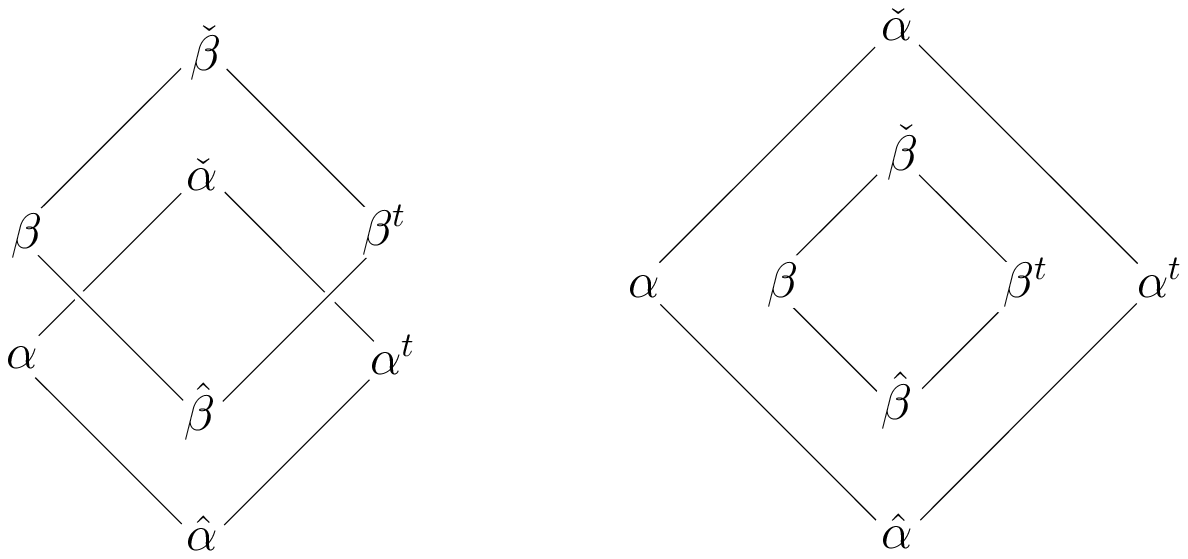}}
   \put(45,0)
    {$\hat{\alpha}<\hat{\beta}<\check{\alpha}<\check{\beta}$}
   \put(185,0)
    {$\check{\alpha}<\hat{\beta}<\check{\beta}<\check{\alpha}$}
  \end{picture}
 \]
 \caption{The two cases in the proof of fourth 
 condition of Definition~\ref{asl.def}.}
 \label{cases.fig}
\end{figure}

First, suppose
$\hat{\alpha}<\hat{\beta}<\check{\alpha}<\check{\beta}$.
Recall that for any
$\gamma_0\in\angbin{n}{n}_d$,
with Northeast sequence $\gamma$
in the same fiber of $\pi_n$,
the expression for the Pl\"ucker coordinate
$p_{\gamma_0}$ as a linear combination of
Pl\"ucker coordinates indexed by
Northeast sequences is supported
on Pl\"ucker coordinates $p_{\delta}$
such that $\delta_+\geq\gamma_+$
with equality if and only if
$\delta=\gamma$, and the Pl\"ucker
coordinate $p_{\gamma}$ appears
with coefficient $\pm1$
(Lemma~\ref{smaller.lemma}).

Upon replacing each Northeast
(or Southwest) partition with
its associated doset element using the
map $\pi_n$ from
Section~\ref{lag.ssec},
the first two terms of straightening relation
for $p_{\alpha}p_{\beta^t}$ are
\begin{eqnarray*}
p_{\alpha}p_{\beta^t}-
 p_{\alpha{\wedge}\beta^t}
  p_{\alpha\vee\beta^t}
&=&
p_{\alpha}p_{\beta^t}-
 \sigma p_{((\alpha\wedge\beta^t)^{\wedge},
 (\alpha\wedge\beta^t)^{\vee})}
  p_{((\alpha\vee\beta^t)^{\wedge},
  (\alpha\vee\beta^t)^{\vee})}+
  \mbox{lower order terms}\\
&=&
\sigma_{\beta}
 p_{(\hat{\alpha},\check{\alpha})}
 p_{(\hat{\beta},\check{\beta})}-
 \sigma
  p_{(\hat{\alpha},\hat{\beta})}
  p_{(\check{\alpha},\check{\beta})}
+\mbox{ lower order terms}\,,
\end{eqnarray*}
where $\sigma=\pm1$.
The second equation is justified as follows.
For any element $\alpha\in\angbin{n}{n}_d$,
recall that $\alpha_+$ (respectively,
$\alpha_-$) denotes the subsequence of
positive (negative) elements of $\alpha$.
This was previous defined for elements of
$\angbin{n}{n}$, but extends to elements
of $\angbin{n}{n}_d$ in the obvious way,
that is, by ignoring the superscript.
The condition
$\hat{\alpha}<\hat{\beta}<\check{\alpha}<\check{\beta}$
is equivalent to
$\bar\alpha_-^c<\bar\beta_-^c<\alpha_+<\beta_+$.
Note that this implies that
$\alpha\wedge\beta^t=\alpha_-\cup\bar\beta_-^c$
and
$\alpha\vee\beta^t=\bar\alpha_+^c\cup\beta_+$.
We compute in the distributive
lattice $\angbin{n}{n}_d$.
\begin{eqnarray*}
(\alpha\wedge\beta^t)\wedge(\alpha^t\wedge\beta)
&=&(\alpha_-\cup\bar\beta_-^c)
\wedge(\beta_-\cup\bar\alpha_-^c)\\
&=&\hat\alpha\\[5pt]
(\alpha\wedge\beta^t)\vee(\alpha^t\wedge\beta)
&=&
(\alpha_-\cup\bar\beta_-^c)
\vee
(\beta_-\cup\bar\alpha_-^c)\\
&=&\hat{\beta}
\end{eqnarray*}
Similarly,
$(\alpha\vee\beta^t)\wedge
(\alpha^t\vee\beta)=\check{\alpha}$
and
$(\alpha\vee\beta^t)\vee
(\alpha^t\vee\beta)=\check{\beta}$.

In the remaining case, we have
$\check{\alpha}<\hat{\beta}<\check{\beta}<\check{\alpha}$,
and it follows that
$\alpha\not\sim\beta$ and 
$\alpha\not\sim\beta^t$ both hold.
We use the relation for the incomparable pair
$\alpha\not\sim\beta^t$.
\begin{eqnarray*}
p_{\alpha}p_{\beta^t}-
 p_{\alpha{\wedge}\beta^t}
  p_{\alpha\vee\beta^t}
&=&
p_{\alpha}p_{\beta^t}-
 \sigma p_{((\alpha\wedge\beta^t)^{\wedge},
 (\alpha\wedge\beta^t)^{\vee})}
  p_{((\alpha\vee\beta^t)^{\wedge},
  (\alpha\vee\beta^t)^{\vee})}+
  \mbox{lower order terms}\\
&=&
\sigma_{\beta}
 p_{(\hat{\alpha},\check{\alpha})}
 p_{(\hat{\beta},\check{\beta})}-
 \sigma
  p_{(\hat{\alpha},\hat{\beta})}
  p_{(\check{\beta},\check{\alpha})}
+\mbox{ lower order terms}\,,
\end{eqnarray*}
where the second equality holds by
a similar computation in $\angbin{n}{n}_d$.
\end{proof}

The next result shows that the algebra with straightening law just
constructed is indeed the coordinate ring of $\LQ_d(n)$.
\begin{thm}\label{deadon.thm}
$\C[\angbin{n}{n}_{d}]/
 \langle I_{d,n}{+}L_{d,n}\rangle\cong\C[\LQ_d(n)]$.
\end{thm}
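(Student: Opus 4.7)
The plan is to construct a graded surjection from $\C[\angbin{n}{n}_d]/\langle I_{d,n}{+}L_{d,n}\rangle$ onto $\C[\LQ_d(n)]$ and then show it is an isomorphism by comparing zero sets. First I would note that the ambient projective space $\P((S^d\C^2)^*\otimes{\rm L}(\omega_n))$ of $\LQ_d(n)$ has homogeneous coordinate ring $\C[\angbin{n}{n}_d]/\langle L_{d,n}\rangle$, since by Proposition~\ref{ctxn.prop} the dual representation ${\rm L}(\omega_n)^*$ is the quotient of $\bigwedge^n{\C^{2n}}^*$ by $L_n$. The inclusion $\LG(n)\subseteq\Gr(n,2n)$ gives $\LM_d(n)\subseteq\M_d(\Gr(n,2n))$, and taking closures yields $\LQ_d(n)\subseteq\Q_d(n,2n)$ under the natural linear inclusion of ambient spaces; hence both the Pl\"ucker relations $I_{d,n}$ and the linear forms $L_{d,n}$ vanish on $\LQ_d(n)$, producing the desired surjection.

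Next I would invoke the consequences of the straightening law established in Theorem~\ref{asl.thm}. By Proposition~\ref{distr.prop}, $\mathcal{P}_{d,n}$ is a distributive lattice and in particular has a unique minimum, so Theorems~\ref{saturated.thm} and~\ref{reduced.thm} imply that $I_{d,n}{+}L_{d,n}$ is saturated and radical, hence equals the full homogeneous ideal of its zero set. It therefore suffices to establish the set-theoretic equality
\[
 V(I_{d,n}{+}L_{d,n})\;=\;\Q_d(n,2n)\cap\P((S^d\C^2)^*\otimes{\rm L}(\omega_n))\;=\;\LQ_d(n),
\]
where the first equality follows from the coordinate-ring description above.

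The inclusion $\LQ_d(n)\subseteq V(I_{d,n}{+}L_{d,n})$ is clear. For the reverse, I would use the Drinfel'd stratification of $\Q_d(n,2n)$: any point is represented by a pair $(D,M)$ with $D\in\P({S^a\C^2}^*)$ an effective degree-$a$ divisor on $\P^1$ and $M\in\M_{d{-}a}(\Gr(n,2n))$. The vanishing of $L_{d,n}$ is a pointwise linear condition on the associated $\binom{2n}{n}$-tuple of degree-$d$ forms; after factoring out the common scalar factor corresponding to $D$, the reduced Pl\"ucker coordinates of $M$ still satisfy the defining relations $L_n$ of ${\rm L}(\omega_n)\subseteq\bigwedge^n\C^{2n}$. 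Proposition~\ref{ctxn.prop} then forces $M(s)\in\LG(n)$ for every $s\in\P^1$, so $M\in\LM_{d{-}a}(n)$ and $(D,M)\in\LQ_d(n)$.

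The main obstacle will be verifying this reverse set-theoretic inclusion: ensuring that the Lagrangian linear conditions descend cleanly through the Drinfel'd stratification from the quasimap to its underlying regular map of smaller degree. This parallels the analogous step in~\cite{SoSt01} for the ordinary Drinfel'd Grassmannian, now enriched by the Lagrangian constraint of Proposition~\ref{ctxn.prop}.
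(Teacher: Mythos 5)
Your argument is correct in outline but takes a genuinely different route from the paper's proof, which is much shorter: the paper simply observes that $I_{d,n}+L_{d,n}\subseteq I(\LQ_d(n))$, that the two ideals have the same degree and codimension (the combinatorial degree from Theorem~\ref{degree.thm} matches the Gromov--Witten computation, and both rings have Krull dimension ${\rm rank}\,{\mathcal P}_{d,n}+1$), and that $I_{d,n}+L_{d,n}$ is unmixed because the ASL is Cohen--Macaulay; equality of degree then forces set-theoretic equality, and radicality (Theorem~\ref{reduced.thm}) upgrades this to ideal equality. In short, the paper replaces the geometric analysis of the boundary strata with a numerical comparison.

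Your approach is more geometric and more self-contained in one respect (it does not need an external degree computation for $\LQ_d(n)$), but it leans on a fact the paper asserts rather than proves: the stratification $\LQ_d(n)=\bigsqcup_{a=0}^{d}\strtm{a}{d}{n}$, and in particular the inclusion $\strtm{a}{d}{n}\subseteq\LQ_d(n)$ for $a>0$, i.e., that every base-point-bearing pair $(D,M)$ is a limit of honest degree-$d$ Lagrangian maps. The step you flag as ``the main obstacle'' --- descent of the Lagrangian linear conditions through the common factor --- is in fact the easy part: if the $\binom{2n}{n}$-tuple $(p_\alpha)$ has common factor $g$ of degree $a$ and satisfies $\sum_i \pm p_{\{\bari,i\}\cup\gamma}\equiv 0$ for every $\gamma\in\tangbin{n}{n-2}$, then dividing by $g$ shows the reduced tuple $(q_\alpha)$ satisfies the same linear identities, so the underlying regular map $M$ lands in $\Gr(n,2n)\cap\P{\rm L}(\omega_n)=\LG(n)$ by Proposition~\ref{ctxn.prop}. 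The genuine gap in your sketch is rather the closure statement for the strata; to complete your argument you would need to supply a degeneration showing each stratum with $a>0$ lies in $\overline{\LM_d(n)}$, whereas the paper's degree-and-codimension argument bypasses the boundary analysis entirely. Your invocation of saturation (Theorem~\ref{saturated.thm}) is correct but not strictly needed once one has radicality together with unmixedness; the paper uses only radicality.
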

\begin{proof}
Let $I':=I(\LQ_d(n))$.  By definition, we have
$I_{d,n}{+}L_{d,n}\subseteq I'$.  Since
the degree and codimension of these ideals are equal, $I'$ is
nilpotent modulo $I_{d,n}{+}L_{d,n}$.  On the
other hand $I_{d,n}{+}L_{d,n}$ is radical,
so $I_{d,n}{+}L_{d,n}=I'$.
\end{proof}
\begin{cor}
The coordinate ring of any Schubert subvariety of
$L\Q_d(n)$ is an algebra with straightening law
on a doset, hence Cohen-Macaulay and Koszul. 
\end{cor}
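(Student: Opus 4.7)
The plan is to deduce the corollary directly from Theorems~\ref{asl.thm} and~\ref{deadon.thm} together with standard consequences of the straightening law. First I would identify $\C[\LQ_d(n)]$ with the ASL $\C[\angbin{n}{n}_d]/\langle I_{d,n}{+}L_{d,n}\rangle$ via Theorem~\ref{deadon.thm}. A Schubert subvariety $X_{\eta}\subseteq\LQ_d(n)$ (indexed by some $\eta\in{\mathcal P}_{d,n}$) has coordinate ring obtained from this ASL by additionally quotienting out the ideal generated by $\{p_{(\beta,\gamma)}\mid\gamma\not\leq\eta\}$. Let ${\mathcal D}_{d,n}|_{\leq\eta}:=\{(\beta,\gamma)\in{\mathcal D}_{d,n}\mid\gamma\leq\eta\}$; this is again a doset on the underlying poset ${\mathcal P}_{d,n}|_{\leq\eta}$, since the defining closure property of a doset is inherited by any order ideal.

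Next I would verify that the quotient is an ASL on ${\mathcal D}_{d,n}|_{\leq\eta}$ in the sense of Definition~\ref{asl.def}. Conditions (1) and (2) are immediate: the residues of $p_{(\beta,\gamma)}$ for $(\beta,\gamma)\in{\mathcal D}_{d,n}|_{\leq\eta}$ generate, and the standard monomials supported on chains within ${\mathcal D}_{d,n}|_{\leq\eta}$ form a subset of the standard $\C$-basis for the ambient ASL, hence remain linearly independent after the quotient. The crucial observation for conditions (3) and (4) is that the straightening relations produced in the proof of Theorem~\ref{asl.thm} are \emph{bounded above by the join}: if $(\hat{\alpha},\check{\alpha})$ and $(\hat{\beta},\check{\beta})$ both have second coordinate $\leq\eta$, then in both cases analyzed in that proof the second coordinate of every standard monomial on the right-hand side is dominated by $\check{\alpha}\vee\check{\beta}\leq\eta$. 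This is inherited from the classical Pl\"ucker relations (cf.\ the bracketing $[\gamma,\delta]$ in Proposition~2.3). Hence the straightening relations restrict cleanly to ${\mathcal D}_{d,n}|_{\leq\eta}$, giving the desired ASL structure.

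Finally, I would invoke the general properties of ASLs on dosets. The Cohen-Macaulay property follows from the classical theory: by Theorem~\ref{reduced.thm} the coordinate ring is reduced, and one can apply the shellability/deformation techniques of DeConcini-Lakshmibai~\cite{DeLa79} and Bruns-Vetter~\cite{BV} (extended to the doset setting as in Theorem~\ref{reduced.thm}) to conclude Cohen-Macaulayness. The Koszul property is a consequence of the quadratic Gr\"obner basis exhibited in the proof of Theorem~\ref{asl.thm}: the defining ideal of the Schubert subvariety is generated by the restricted straightening relations together with the linear variables $p_{(\beta,\gamma)}$ for $\gamma\not\leq\eta$, and this generating set remains a Gr\"obner basis with quadratic initial ideal, from which Koszulness follows by Fr\"oberg's criterion.

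The main obstacle is the middle step: confirming that none of the lower-order monomials appearing in the straightening of a non-standard degree-$2$ product on ${\mathcal D}_{d,n}|_{\leq\eta}$ escape the sub-doset. The explicit computation inside the distributive lattice $\angbin{n}{n}_d$ at the end of the proof of Theorem~\ref{asl.thm} is precisely what is needed here, since it shows that every auxiliary term on the right-hand side has its second component equal to $\check{\alpha}\vee\check{\beta}$ or smaller, and the first two paragraphs of that proof establish an analogous statement for higher-degree monomials via induction on the lexicographic order.
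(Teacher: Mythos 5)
Your overall strategy — quotient the ASL by the coideal of variables $p_{(\beta,\gamma)}$ with $\gamma\not\leq\eta$, check the ASL axioms survive, then cite the doset-ASL machinery for Cohen-Macaulayness and the quadratic Gr\"obner basis for Koszulness — is the same as the paper's. However, the ``crucial observation'' you isolate in the middle paragraph has the inequality backwards, and this is a real gap. You claim that for a non-standard product $p_{(\hat{\alpha},\check{\alpha})}p_{(\hat{\beta},\check{\beta})}$ with both second coordinates $\leq\eta$, every standard monomial on the right-hand side has second coordinate \emph{dominated by} $\check{\alpha}\vee\check{\beta}$, citing the interval $[\gamma,\delta]$ in the proposition quoted from \cite{SoSt01}. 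But that proposition says $\alpha,\beta$ lie in $[\gamma,\delta]$, i.e. $\gamma\leq\alpha\wedge\beta$ and $\delta\geq\alpha\vee\beta$: the \emph{top} index of each non-initial term goes \emph{up}, not down. The same is true in the doset relations exhibited in the proof of Theorem~\ref{asl.thm}. So the relation does \emph{not} ``restrict cleanly'' into the sub-doset ${\mathcal D}_{d,n}|_{\leq\eta}$; rather, the terms $p_{\gamma}p_{\delta}$ whose top index $\delta\not\leq\eta$ become zero in the quotient, and what survives is a shorter but still valid straightening relation. This is the correct justification for why conditions (2)--(4) pass to the quotient (the set of killed variables is a coideal, and the top-goes-up property of the relations means the ideal they generate has a $\C$-basis of standard monomials each involving a killed variable), and your version, as stated, is simply false.

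Your Cohen-Macaulayness argument is also vaguer than what the paper actually does and leaves the key reduction unstated. The paper does not use reducedness (Theorem~\ref{reduced.thm}) here at all; it uses the De Concini--Lakshmibai result that an ASL on a doset $\mathcal{D}$ is Cohen-Macaulay if and only if the \emph{discrete} ASL $\C\{{\mathcal P}\}$ on the underlying poset is, then identifies $\C\{{\mathcal P}\}$ with the face ring of the order complex of ${\mathcal P}$, and finally invokes shellability of that complex because every interval in ${\mathcal P}_{d,n}$ is a distributive lattice (Proposition~\ref{distr.prop}) and hence locally upper semimodular. Simply gesturing at ``shellability/deformation techniques'' skips the reduction from the doset to the poset, which is the substance of the step. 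The Koszul part of your argument is fine and matches the paper.
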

\begin{proof}
For $\alpha^{(a)}\in{\mathcal D}_{d,n}$,
the Schubert variety $X_{\alpha^{(a)}}$
is defined by the vanishing of the
Pl\"ucker coordinates $p_{(\beta,\gamma)}^{(b)}$
for $\gamma^{(b)}\not\leq\alpha^{(a)}$.
The four conditions in Definition~\ref{asl.def}
are stable upon setting these variables to
zero, so we obtain an algebra with straightening
law on the doset
$\{(\beta,\gamma)^{(b)}\in{\mathcal D}_{d,n}
 \mid\gamma^{(b)}\leq\alpha^{(a)}\}$.

Let ${\mathcal D}\subseteq{\mathcal P}\times{\mathcal P}$
be a doset on the poset ${\mathcal P}$, $A$ any
algebra with straightening law on ${\mathcal D}$,
and $\C\{{\mathcal P}\}$
the unique {\it discrete}
algebra with straightening law on ${\mathcal P}$.
That is $\C\{{\mathcal P}\}$ has algebra generators
corresponding to the elements of ${\mathcal P}$,
and the straightening relations are $\alpha\beta=0$
if $\alpha$ and $\beta$ are incomparable elements of ${\mathcal P}$.
Then $A$ is Cohen-Macaulay if and only if $\C\{{\mathcal P}\}$
is Cohen-Macaulay~\cite{DeLa79}.

On the other hand, $\C\{{\mathcal P}\}$ is the face ring of the
order complex of ${\mathcal P}$.
The order complex of a locally upper semimodular poset
is shellable.  The face ring of a shellable simplicial
complex is Cohen-Macaulay~\cite{BH}.
By Proposition~\ref{distr.prop},
any interval in the poset ${\mathcal P}_{d,n}$
is a distributive lattice,
hence locally upper semimodular.
This proves that $\C[\LQ_d(n)]$
is Cohen-Macaulay.  The Koszul
property is a consequence of the
quadratic Gr\"obner basis consisting
of the straightening relations.
\end{proof}
The main results of this paper suggest that the
space of quasimaps is an adequate setting
for the study of the enumerative geometry of curves
into a general flag variety.  They also give
a new and interesting example of a family of varieties
whose coordinate rings are Hodge algebras.  We expect
that further study of the space of quasimaps into a
flag variety of general type will continue to yield
new results in these directions.
\bibliographystyle{amsplain}
\bibliography{all}

\end{document}